	\definecolor{darkred}{rgb}{0.5,0,0}
	\definecolor{darkgreen}{rgb}{0,0.5,0}
	\definecolor{darkblue}{rgb}{0,0,0.5}
\DeclareSymbolFont{cyrillic}{T2A}{cmr}{m}{n}
\DeclareMathSymbol{\Sha}{\mathalpha}{cyrillic}{216}
\theoremstyle{plain}
\newtheorem{theorem}{Theorem}[section]
\newtheorem*{theorem*}{Theorem}
\newtheorem{proposition}[theorem]{Proposition}
\newtheorem{lemma}[theorem]{Lemma}
\newtheorem{corollary}[theorem]{Corollary}
\theoremstyle{remark}
\newtheorem{remark}[theorem]{Remark}
\newtheorem{example}[theorem]{Example}
\theoremstyle{definition}
\newtheorem{definition}[theorem]{Definition}
\newtheorem{notation}[theorem]{Notation}
\numberwithin{equation}{section}
\newcommand{\SESA}[4]{%
  \begin{tikzcd}[ampersand replacement=\&]
  0 \arrow[r] \& #1 \arrow[r] \& #2 \arrow[r,"#4"] \& #3 \arrow[r] \& 0 
  \end{tikzcd}%
}
\DeclareMathOperator{\HH}{H}
\DeclareMathOperator{\Gal}{Gal}
\DeclareMathOperator{\Pic}{Pic}
\DeclareMathOperator{\Br}{Br}
\DeclareMathOperator{\inv}{inv}
\DeclareMathOperator{\Res}{Res}
\DeclareMathOperator{\Spec}{Spec}
\DeclareMathOperator{\Frac}{Frac}
\DeclareMathOperator{\Norm}{Norm}
\DeclareMathOperator{\GL}{GL}
\DeclareMathOperator{\rank}{rank}
\DeclareMathOperator{\Cores}{Cores}
\DeclareMathOperator{\Tr}{Tr}
\DeclareMathOperator{\im}{Im}
\DeclareMathOperator{\et}{\acute{e}t}
\renewcommand{\epsilon}{\varepsilon}
\begin{document}
\onehalfspacing
\title[Integral points on symmetric affine cubic surfaces]{Integral points on symmetric affine cubic surfaces}
\thanks{2020 {\em Mathematics Subject Classification} 
     14G12 (primary), 11D25, 14G05, 14F22 (secondary).
}

\author{H.Uppal}
	\address{H.Uppal, Department of Mathematical Sciences, University of Bath, Claverton Down, Bath, BA2 7AY, UK}
	\email{hsu20@bath.ac.uk}

\begin{abstract}
We show that if $f(u)\in \mathbb{Z}[u]$ is a monic cubic polynomial, then for all but finitely many $n\in \mathbb{Z}$ the affine cubic surface
$f(u_{1})+f(u_{2})+f(u_{3})=n \subset \mathbb{A}^{3}_{\mathbb{Z}}$ has no integral Brauer-Manin obstruction to the Hasse principle.
\end{abstract}  

\maketitle
\setcounter{tocdepth}{1}
\tableofcontents
\section{Introduction}
Many interesting Diophantine problems involve finding solutions to inhomogeneous equations, for example a conjecture by Heath-Brown \cite{HB92} states that if $n$ is an integer that is not congruent to 4 or 5 modulo 9, then there exists infinity many integral solutions to the equation \begin{equation}\label{SumOfThreeCubesEquation}
    u_{1}^3+u_{2}^3+u_{3}^3 = n.
\end{equation}
The existence of integral solutions to (\ref{SumOfThreeCubesEquation}) was first studied by Mordell in \cite{M53}, later Miller and Woollett \cite{MW55} showed the existence of integral solutions for $0\leq n \leq 100$, with $n$ not congruent to 4 or 5 mod 9. A system of inhomogeneous equations with integral coefficients defines a scheme $\mathcal{U}$ over $\mathbb{Z}$ and we can view integral solutions to such a system as integral points on $\mathcal{U}$. There is a necessary condition for the existence of an integral point on $\mathcal{U}$, specifically denote by $U$ the base change of $\mathcal{U}$ to $\mathbb{Q}$, then $\mathcal{U}(\mathbb{Z})\neq\emptyset$ implies \begin{align*}
    \mathcal{U}(\mathbb{A}_{\mathbb{Z}}):= U(\mathbb{R}) \times \prod\limits_{p \ \text{prime}}\mathcal{U}(\mathbb{Z}_{p}) \neq \emptyset.
\end{align*}A natural question to ask is: when does $\mathcal{U}(\mathbb{A}_{\mathbb{Z}})\neq \emptyset$ imply $\mathcal{U}(\mathbb{Z})\neq \emptyset$? This implication is called the integral Hasse principle, but does not hold in general. Some failures of the integral Hasse principle can be explained by the integral Brauer-Manin obstruction, for example \cite[\S 8]{CTX09}, \cite{KT08} and \cite[Example 7.8]{JS17}. In this paper we will study families of log $K3$ surfaces over $\mathbb{Z}$, an example of such a family of log $K3$ surfaces would be the schemes over $\mathbb{Z}$ defined by (\ref{SumOfThreeCubesEquation}), parameterised by $n$. Colliot-Thélène and Wittenberg \cite[Thm 4.1a]{CTW12} showed that (\ref{SumOfThreeCubesEquation}) has no integral Brauer-Manin obstruction for any $n \in \mathbb{Z}$. We ask if a more general statement holds, specifically: given a monic cubic polynomial $f(u) \in \mathbb{Z}[u]$ and an integer $n$, can the affine surface
\begin{align*}
    \mathcal{U}_{n}: f(u_{1})+f(u_{2})+f(u_{3})=n
\end{align*} have an integral Brauer-Manin obstruction? In this paper we prove the following result, which extends our understanding of such questions.
\begin{theorem}\label{BIGTHM}
Let $f(u) \in \mathbb{Z}[u]$ be a monic cubic polynomial. Then for all but finitely many $n\in \mathbb{Z}$, the affine surface
\begin{align}\label{AffineCubicIntro}
    \mathcal{U}_{n}: f(u_{1})+f(u_{2})+f(u_{3})=n \subset \mathbb{A}^{3}_{\mathbb{Z}}
\end{align} has no integral Brauer-Manin obstruction.
\end{theorem}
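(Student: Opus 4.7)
The plan is to compactify $\mathcal{U}_n$ to a projective cubic surface $X_n \subset \mathbb{P}^3_{\mathbb{Q}}$ by homogenising with a variable $u_0$. Because $f$ is monic cubic, the divisor at infinity $D_\infty = X_n \cap \{u_0 = 0\}$ is always the Fermat cubic $u_1^3 + u_2^3 + u_3^3 = 0$, a smooth genus-$1$ curve which depends neither on $n$ nor on the lower-order coefficients of $f$. For all but finitely many $n$ the discriminant of $X_n$ does not vanish, so $X_n$ is a smooth cubic surface and $\mathcal{U}_n$ is a log $K3$ surface of ``cubic'' type, to which the framework for integral points on log $K3$ surfaces (following Colliot-Th\'el\`ene--Wittenberg and Harpaz) applies.

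I would then analyse $\operatorname{Br}(U_n)/\operatorname{Br}(\mathbb{Q})$ through the residue exact sequence
$$0 \to \operatorname{Br}(X_n) \to \operatorname{Br}(U_n) \xrightarrow{\partial} H^1(D_\infty,\mathbb{Q}/\mathbb{Z}),$$
combined with the classical description of the algebraic Brauer group of a smooth cubic surface in terms of the Galois action on the $27$ lines. For all but finitely many $n$, that Galois action is large enough to force $\operatorname{Br}(X_n)/\operatorname{Br}(\mathbb{Q}) = 0$, so every class in $\operatorname{Br}(U_n)/\operatorname{Br}(\mathbb{Q})$ is detected by its residue along $D_\infty$. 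I would then identify each residue as a torsion character of $D_\infty$, produce an explicit cyclic-algebra representative on $U_n$, and compare it to the cyclic algebras used in \cite{CTW12} for $f(u)=u^3$; the key observation is that, because the leading cubic form of $f(u_1)+f(u_2)+f(u_3)-n$ is independent of the lower-order terms of $f$, the residue calculation is a ``perturbation'' of the one carried out in that paper.

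The final step is to show that no such class obstructs the existence of an integral point. Following the fibration strategy of \cite{CTW12}, I would use the linear function $t := u_1+u_2+u_3 \colon \mathcal{U}_n \to \mathbb{A}^1_{\mathbb{Z}}$, whose fibres are affine conics with twisting data depending only on $f$, and observe that the Brauer classes isolated in the previous step become vertical for this fibration. A standard local adjustment at finitely many places then allows one to modify any given $(M_v) \in \mathcal{U}_n(\mathbb{A}_{\mathbb{Z}})$ into one whose invariant sum for each relevant class vanishes, provided one can find a value of $t$ over which the fibre is everywhere locally integrally soluble; the latter is obtained from the initial adelic point by a Hensel-lifting argument.

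The main obstacle will be the uniform control of the Brauer group as $n$ varies. Although the geometric Picard lattice of $X_n$ is independent of $n$, the Galois action on the $27$ lines can degenerate for a thin set of values of $n$, causing $\operatorname{Br}(X_n)/\operatorname{Br}(\mathbb{Q})$ to jump, and the cyclic algebras representing the residues involve the coefficients of $f$ and $n$ in an explicit way. Showing that the set of exceptional $n$ is genuinely \emph{finite} rather than merely thin, and that the fibration method then goes through uniformly outside this finite set, is where I expect the bulk of the technical work to lie.
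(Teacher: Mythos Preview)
Your framework---compactify to $X_n$, use Grothendieck purity along the Fermat cubic $D_\infty$, and control the Galois action on the $27$ lines---is exactly the paper's. You also correctly identify the crux: upgrading ``thin'' to ``finite'' for the exceptional set of $n$. The paper does this by elementary Diophantine arguments on the discriminants of two auxiliary cubics $f_1,f_2$ (norm equations in $\mathbb{Z}[\zeta_3]$, difference-of-squares factorisations) together with Siegel's theorem applied to the genus-$1$ curve $f_2(n,x)=0$ in the $(n,x)$-plane.

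Where your plan diverges is in the ``final step''. You propose to isolate the residues along $D_\infty$, build explicit cyclic algebras, and then run a fibration argument to kill the invariant sums. The paper avoids all of this: it shows directly that the residue map $\partial_{D_\infty}\colon \Br U_n\to \HH^1_{\et}(D_\infty,\QQ/\ZZ)$ is \emph{zero} whenever the discriminant of $f_1(x)=x^3+ax+(3b-n)$ is not a square (which fails for only finitely many $n$). The argument restricts a putative class to the three obvious lines $L_i\subset X_n\times k$ through the flex points $P_i\in D_\infty$, where $k=\QQ[x]/(f_1)$; since each $L_i\setminus P_i\cong\AA^1_k$ has constant Brauer group, the residue vanishes at each $P_i$ over $k$, and injectivity of restriction $\HH^1(\QQ,\QQ/\ZZ)\hookrightarrow\HH^1(k,\QQ/\ZZ)$ (valid because $k/\QQ$ is trivial or non-Galois cubic) plus the structure of $\HH^1$ of the Fermat cubic forces the residue to be zero over $\QQ$. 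Hence $\Br U_n=\Br X_n=\Br\QQ$ outright, and there is simply nothing to obstruct.

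Two further remarks. First, the fibres of $t=u_1+u_2+u_3$ on $\mathcal{U}_n$ are not affine conics: eliminating $u_3$ leaves a plane cubic in $(u_1,u_2)$, generically of genus $1$, so the ``standard local adjustment'' for conic fibrations does not apply as stated. The conic bundle structure the paper uses lives on $X_n$ and comes from projection away from a line, which is a different map. Second, the case $3a_1=a_2^2$ (equivalently $a=0$ after completing the cube) must be split off: there $\Br X_n/\Br\QQ\cong\ZZ/3\ZZ$ for infinitely many $n$, and the paper handles it by an integral change of variables reducing to the sums-of-three-cubes result of Colliot-Th\'el\`ene--Wittenberg, not by the generic-triviality argument.
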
 
The structure of the paper is as follows. Sections \ref{ConicBundelSec} and \ref{SmoothCubicSurfaces} explain some basic facts about conic bundles and smooth cubic surfaces, respectively. In Section \ref{GeometryOfCompact} we calculate the Brauer group of the compactification $X_{n}$ of $U_{n}:=\mathcal{U}_{n}\times_{\mathbb{Z}}\mathbb{Q}$, if $X_{n}$ is smooth. In Section \ref{FinitenessResults} we show that under certain conditions on the coefficients of the defining equation for $X_{n}$, the Brauer group of $X_{n}$ is trivial for all but finitely many $n \in \mathbb{Z}$. In Section \ref{GeometryOfAffine} we calculate $\Br U_{n}$ under mild assumptions. In Section \ref{MainThms} we prove Theorem \ref{BIGTHM}. In Sections \ref{Examples} and \ref{SectionSA} we show explicit examples where our work can be applied, specifically we show that the affine surface defined by the sum of three tetrahedral numbers has no integral Brauer-Manin obstruction to the Hasse principle and that the surfaces $U_{n}$ can fail weak and strong approximation. All computations were done on SageMath \cite{sagemath} and Magma \cite{MAGMA}.  
\subsection{Acknowledgments}
The author would like thank Daniel Loughran for his insights and tireless support and J\"org Jahnel for donating his Magma code for computing the Brauer group of smooth cubic surfaces. They would also like to thank Sam Streeter and Olivier Wittenberg for comments on early revisions of this paper, J.-L. Colliot-Thélène for pointing out the sum of three cubes conjecture was first presented as a question of Mordell in the literature and the anonymous referee for many valuable comments and corrections.
\section{Preliminaries}
We fix some notation and definitions for the rest of the paper. 
\begin{notation}
Let $S$ be a scheme over a ring $R$. Given a morphism of rings $R\rightarrow T$ denote by $S\times_{R}T$ the base change $S\times_{\Spec(R)}\Spec(T)$, when the base ring $R$ is clear we drop this in our notation. The field $k$ will always be perfect and we denote by $\bar{k}$ the algebraic closure of $k$. If $S$ is a scheme over a field $k$, then denote by $\bar{S}$ the base change of $S$ to the algebraic closure of $k$. 
\end{notation}
\begin{definition}[Grothendieck-Brauer group]
Let $S$ be a scheme over a field $k$. The cohomological Brauer group of $S$ is the second étale cohomology group
$$
\Br S:=\HH^{2}_{\et}(S,\mathbb{G}_{m}).
$$
We define the filtration $$0 \subseteq \Br_{0} S \subseteq \Br_{1} S \subseteq \Br S$$
where $\Br_{0} S:=\im\left(\Br k \rightarrow \Br S\right)$ and $\Br_{1} S:=\ker(\Br S \rightarrow \Br \bar{S})$. We call $\Br_{1} S$ the \emph{algebraic Brauer group} and $\Br S/\Br_{1} S$ the \emph{transcendental Brauer group}.
\end{definition} 
\subsection{Integral Brauer-Manin obstruction}\label{IBMO} 
In this section we recall the integral Brauer-Manin set up, as introduced by Colliot-Thélène and Xu \cite{CTX09}.
Let $\Omega$ be the set of places of $\mathbb{Q}$ and denote by $\mathbb{Q}_{v}$ the completion of $\mathbb{Q}$ at $v \in \Omega$. For all places $v \in \Omega$ there exists an injective invariant map
$\inv_{v}:\Br \mathbb{Q}_{v} \rightarrow \mathbb{Q}/\mathbb{Z}$ which has image $\{0,1/2\}$ if $v$ is the real place and is an isomorphism if $v$ is finite. Furthermore, there is an exact sequence
\begin{equation*}
    \SESA{\Br \mathbb{Q}}{\bigoplus\limits_{v \in \Omega}\Br \mathbb{Q}_{v}}{\mathbb{Q}/\mathbb{Z}}{\sum\limits_{v\in \Omega}\text{inv}_{v}}.
\end{equation*}
Let $\mathcal{U}$ be a scheme of finite type over $\mathbb{Z}$ and let $U$ denote the base change of $\mathcal{U}$ to $\mathbb{Q}$. By \cite[Prop 13.3.1]{CTS21} for every $\alpha \in \Br U$ there exists a finite set of places $S_{\alpha}$ including all archimedean ones, such that the invariant map 
\[
\inv_{v}\alpha:\mathcal{U}(\mathbb{Z}_{v})\rightarrow \mathbb{Q}/\mathbb{Z}
\] is zero for all $v\not\in S_{\alpha}$. Hence, there exists a well defined pairing
$$
    \mathcal{U}(\mathbb{A}_{\mathbb{Z}}) \times \Br U \rightarrow \mathbb{Q}/\mathbb{Z}, \ \left((x_{v})_{v},\alpha\right)\mapsto \sum\limits_{v\in \Omega} \inv_{v}(\alpha(x_{v})).
$$ The following set 
$$\mathcal{U}(\mathbb{A}_{\mathbb{Z}})^{\Br}:=\left\{(x_{v})_{v}\in U(\mathbb{R}) \times \prod\limits_{p \ \text{prime}}\mathcal{U}(\mathbb{Z}_{p}):\sum\limits_{v \in \Omega}\inv_{v}(\alpha(x_{v}))=0\ \forall \alpha \in \Br U\right\}$$
is the \emph{integral Brauer-Manin set} and there are inclusions
$$
    \mathcal{U}(\mathbb{Z})\subseteq \mathcal{U}(\mathbb{A}_{\mathbb{Z}})^{\Br}\subseteq \mathcal{U}(\mathbb{A}_{\mathbb{Z}}).
$$
We say there is an \emph{integral Brauer-Manin obstruction to the Hasse principle} when 
$$
    \mathcal{U}(\mathbb{A}_{\mathbb{Z}}) \neq \emptyset \ \  \text{but}\ \  \mathcal{U}(\mathbb{A}_{\mathbb{Z}})^{\Br}= \emptyset.
$$

\section{Conic bundles}\label{ConicBundelSec}
In this section we explain some basic properties about conic bundles and give their relation to smooth cubic surfaces. For the rest of Section \ref{ConicBundelSec} we assume $k$ is a field of characteristic not equal to 2 or 3.

\begin{definition}
A \emph{conic bundle} over $k$ is a smooth projective surface $X$ over $k$ together with a dominant morphism $\pi:X \rightarrow \mathbb{P}^{1}_{k}$, all of whose fibres are isomorphic to either a smooth conic or a union of two lines. 
\end{definition}

\begin{remark}
The Picard group of a conic bundle $X$ is free and finitely generated. As $\Pic X$ is a subgroup of $\Pic \bar{X}$ it is sufficient to show that $\Pic \bar{X}$ is free and finitely generated. Any conic bundle over an algebraically closed field is rational (over $\mathbb{P}^{1}$) \cite[\S 2, Prop 1b]{I79}, using \cite[\S 9, Thm 2.2]{Q02} and \cite[Ex 8.5a]{H77} it follows that $\Pic\bar{X}$ is free and finitely generated.
\end{remark}
\begin{definition}
A conic $C$ over $k$ is called \emph{split} if it is either smooth or isomorphic to a union of two rational lines over $k$.
\end{definition}
\begin{definition}
A smooth projective surface $X$ over $k$ is said to be \emph{split} if the morphism $\Pic X\rightarrow \Pic \bar{X}$ is an isomorphism.
\end{definition}
\begin{lemma}\label{RepLemma}
Let $G$ be a finite group acting on a finite dimensional vector space $V$ over $\mathbb{R}$. Denote by $V^{G}$ the vectors of $V$ which are invariant under the action of $G$. If the action of $G$ is non-trivial then $\dim_{\mathbb{R}} V^{G}<\dim_{\mathbb{R}} V$.
\end{lemma}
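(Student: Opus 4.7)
The plan is to use the classical averaging argument (Maschke's theorem in characteristic zero) to split $V$ as a $G$-representation into the invariant part and a complementary subrepresentation, and then show the complement must be non-zero precisely when the action is non-trivial.

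Concretely, I would define the averaging operator
\[
    p : V \to V, \qquad p(v) = \frac{1}{|G|}\sum_{g \in G} g\cdot v,
\]
which is well defined because $\mathbb{R}$ has characteristic zero. A direct computation shows $p$ is $G$-equivariant, $p(v) \in V^G$ for every $v \in V$, and $p$ restricts to the identity on $V^G$; in particular $p$ is an idempotent projection onto $V^G$. Setting $W := \ker p$, one obtains a $G$-invariant direct sum decomposition $V = V^G \oplus W$.

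It then remains to observe that $W \neq 0$. If instead $W = 0$, then $V = V^G$, meaning every vector of $V$ is fixed by every element of $G$, which contradicts the hypothesis that the action of $G$ on $V$ is non-trivial. Hence $\dim_{\mathbb{R}} W \geq 1$ and
\[
    \dim_{\mathbb{R}} V^G = \dim_{\mathbb{R}} V - \dim_{\mathbb{R}} W < \dim_{\mathbb{R}} V.
\]

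There is no real obstacle here: the argument is entirely standard and each step is routine. The only mild point to be careful about is that the non-triviality hypothesis concerns the action of $G$ on $V$ as a whole (i.e.\ some $g \in G$ acts as a non-identity linear map), not merely that $G$ itself is non-trivial as a group; the conclusion $W \neq 0$ uses exactly this formulation.
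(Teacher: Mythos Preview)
Your proof is correct, but it takes a more elaborate route than the paper. The paper simply observes that $V^{G}$ is a subspace of $V$, so if $\dim_{\mathbb{R}} V^{G} = \dim_{\mathbb{R}} V$ then $V^{G} = V$, which by definition means every $g \in G$ acts as the identity, contradicting the non-triviality hypothesis. There is no need for the averaging projector or a direct-sum decomposition; the contrapositive is immediate from linear algebra. Your Maschke-type argument is of course valid and gives the extra structural information $V = V^{G} \oplus W$ with $W$ a $G$-stable complement, but this additional content is not used anywhere in the paper. In short: both arguments work, the paper's is the one-line contrapositive, and yours supplies a genuine (but unnecessary) decomposition.
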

\begin{proof}
The action of $G$ on $V$ defines a representation $\rho:G\rightarrow \GL(V)$, and as $V$ is a finite dimensional vector space. We have that $V^{G}$ is a vector subspace of $V$, hence if $\dim V^{G}=\dim V$, this implies that $\rho$ is the trivial representation i.e.\ $G$ acts trivially on $V$.
\end{proof}

\begin{lemma}\label{EnoughLines}
Let $\pi:X\rightarrow \mathbb{P}^{1}_{k}$ be a conic bundle surface over $k$, such that $X(k)\neq \emptyset$. If every singular fibre of $\pi$ over $\overline{k}$ is defined over $k$ and split, then $X$ is split over $k$.
\end{lemma}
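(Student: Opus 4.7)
The plan is to show that $\Pic \bar X$ is generated as an abelian group by classes of divisors defined over $k$, which forces the $\Gal(\bar k/k)$-action on $\Pic \bar X$ to be trivial; the Hochschild--Serre exact sequence, together with $X(k) \neq \emptyset$ to annihilate the obstruction class in $\Br k$, then upgrades this to the isomorphism $\Pic X \cong \Pic \bar X$ demanded by the definition of splitness. I would first recall that $\Pic \bar X$ is free abelian of rank $n+2$, where $n$ is the number of singular fibres of $\bar\pi$, with a basis consisting of the generic smooth fibre class $F$, one component $D_i$ from each singular fibre (the other component being $F - D_i$), and the class of a section of $\bar\pi$. By hypothesis every $D_i$ may be chosen defined over $k$, and $F$ is automatically Galois-invariant, so the task reduces to exhibiting a section of $\pi$ defined over $k$.

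To this end, I would simultaneously blow down the pairwise disjoint $(-1)$-curves $D_1, \dots, D_n$ by Castelnuovo's contractibility criterion (legitimate over $k$ because each $D_i$ is a $k$-rational smooth $(-1)$-curve), yielding a smooth projective surface $Y/k$ together with an induced smooth Brauer--Severi fibration $\pi_Y : Y \to \mathbb{P}^1_k$ of relative dimension one. The $k$-rational point of $X$ maps to a point $Q \in Y(k)$, whence the fibre $Y_{\pi_Y(Q)}$ is a smooth conic over $k$ with a $k$-rational point and therefore trivial in $\Br k$. I would then apply Faddeev's exact sequence for $\Br k(t)$: smoothness of $\pi_Y$ forces every residue of $[Y_\eta] \in \Br k(t)$ to vanish, so $[Y_\eta]$ lies in $\Br k \hookrightarrow \Br k(t)$, and specialising at $\pi_Y(Q) \in \mathbb{P}^1(k)$ identifies this element with $[Y_{\pi_Y(Q)}] = 0$. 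Consequently $Y_\eta \cong \mathbb{P}^1_{k(t)}$, and any of its $k(t)$-points extends to a section $\sigma : \mathbb{P}^1_k \to Y$ of $\pi_Y$, using that $\mathbb{P}^1_k$ is a smooth curve and $Y$ is projective.

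Lifting $\sigma$ along the blow-down $X \to Y$, by the same extension-of-rational-maps argument, then yields a $k$-section $\tilde\sigma : \mathbb{P}^1_k \to X$ of $\pi$; its class in $\Pic \bar X$ together with $F, D_1, \dots, D_n$ forms a Galois-invariant generating set for $\Pic \bar X$, completing the proof.

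The main obstacle I foresee is the purely geometric verification that the contraction $X \to Y$ really does produce a smooth Brauer--Severi fibration over $\mathbb{P}^1_k$, rather than acquiring non-reduced or otherwise degenerate fibres above the $t_i$. One expects each new fibre $Y_{t_i}$ to be the image of the surviving component $D_i'$ (a smooth $\mathbb{P}^1_k$) passing through the point to which $D_i$ is contracted, but pinning this down rigorously requires a local analysis near the nodes of the original singular fibres. Once that input and the compatibility of Brauer specialisation at $k$-rational points of $\mathbb{P}^1_k$ are in hand, the remainder of the argument is essentially formal.
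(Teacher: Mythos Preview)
Your argument is correct, and the ``main obstacle'' you flag is not a genuine one: since each $D_i$ is a geometrically integral $(-1)$-curve over $k$ meeting the other component $D_i'$ transversally at a single point, Castelnuovo contraction over $k$ produces a smooth surface $Y$, and the image of $D_i'$ acquires self-intersection $0$, so $Y \to \mathbb{P}^1_k$ is a smooth conic bundle (equivalently, a Brauer--Severi scheme of relative dimension one). The rest of your Brauer/Faddeev argument is standard and goes through.

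That said, your route is genuinely different from the paper's. The paper never constructs a $k$-section. Instead it quotes a rank formula (from \cite{CLS18}) giving $\rank \Pic(X_L) = 2 + \#\{\text{split singular fibres of }\pi\text{ over }L\}$ for any extension $L/k$; the hypothesis forces this count to be the same for $L=k$ and $L=\bar k$, so $\rank \Pic X = \rank \Pic \bar X$. Combined with $\Pic X = (\Pic \bar X)^{G}$ (from $X(k)\neq\emptyset$), one tensors with $\mathbb{R}$ and invokes the elementary observation that a finite group acting on a finite-dimensional real vector space with full-dimensional invariants must act trivially. This is shorter and avoids any blow-down or Brauer-group machinery, at the cost of citing the rank formula as a black box. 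Your approach is more constructive and self-contained, and yields as a bonus an actual $k$-section of $\pi$, which the paper obtains only later, in a special case, by a separate ad hoc argument.
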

\begin{proof} Let $L \supseteq k$ be a field extension. Using \cite[Lemma 2.1]{FLS18} we have that for $X_{L}:=X\times_{k}L$ \begin{equation}\label{RankSingFib}
    \rank \Pic \left(X_{L}\right)=2+\#\{\text{closed point }p \in \mathbb{P}^{1}_{L}\  :  \pi^{-1}(p)\  \text{singular and split over}\ \kappa(p)\}
\end{equation} where $\kappa(p)$ is the residue field of the point $p$. Under the assumption that each singular fibre is defined and split over $k$, by (\ref{RankSingFib}) $\rank \Pic X=\rank \Pic\bar{X}$. Moreover, as $X$ is a projective scheme over a field $k$ with $X(k) \neq \emptyset$ by \cite[Rem 5.4.3]{CTS21}
$
    \Pic X=\left(\Pic \bar{X}\right)^{\Gal(\bar{k}/k)}.
$ By definition $\dim_{\mathbb{R}}\left(\Pic(X)\otimes_{\mathbb{Z}} \mathbb{R}\right)=\rank \Pic X$. The absolute Galois group acts on $\Pic \bar{X}$ via a finite subgroup $G\subset \Gal(\bar{k}/k)$, as $\dim_{\mathbb{R}}\left(\Pic(X)\otimes_{\mathbb{Z}} \mathbb{R}\right)=\dim_{\mathbb{R}}\left(\Pic (\bar{X})\otimes_{\mathbb{Z}} \mathbb{R}\right)$ by Lemma \ref{RepLemma} the action of $\Gal(\bar{k}/k)$ on $\Pic(\bar{X})\otimes_{\mathbb{Z}} \mathbb{R}$ is trivial, hence $\Pic X=\Pic \bar{X}$. 
\end{proof}
\begin{lemma}\label{SplitFibres}
Let $\pi: X\rightarrow \mathbb{P}^{1}_{k}$ be a conic bundle over $k$. If $\pi$ admits a section over $k$, then every singular fibre of $\pi$ is split over $k$.
\end{lemma}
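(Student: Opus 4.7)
The plan is to argue via intersection theory on the smooth surface $X$. First I would let $S = s(\mathbb{P}^{1}_{k})$ denote the image of the section, an irreducible curve isomorphic to $\mathbb{P}^{1}_{k}$, and observe that because $\pi \circ s = \mathrm{id}$ one has the intersection number $S \cdot F = 1$ for the class $F$ of a fibre.

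Next I would fix a closed point $p \in \mathbb{P}^{1}_{k}$ whose fibre is singular, choose a geometric point $\bar{p}$ above it, and pass to the algebraic closure. A standard fact for smooth conic bundles is that the geometric fibre over $\bar{p}$ decomposes as $L_{1} + L_{2}$, a pair of $(-1)$-curves meeting transversally at a single point $q$; double-line fibres are excluded by the smoothness of $X$. Because $S$ is not contained in any fibre, each $\bar{S} \cdot L_{i} \geq 0$, and the identity $\bar{S} \cdot L_{1} + \bar{S} \cdot L_{2} = \bar{S} \cdot F = 1$ forces the pair $\{\bar{S} \cdot L_{1}, \bar{S} \cdot L_{2}\}$ to equal $\{0, 1\}$. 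In particular $\bar{S}$ must avoid the node $q$ (otherwise the local intersection multiplicity there would be at least $2$) and therefore meets exactly one of $L_{1}, L_{2}$ at a smooth point of the fibre.

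The final step is a short Galois descent: the stabiliser $\Gal(\bar{k}/k(p))$ of $\bar{p}$ in the absolute Galois group preserves $\bar{S}$ and permutes the unordered pair $\{L_{1}, L_{2}\}$. If some element of this stabiliser swapped the two lines it would also interchange the numbers $\bar{S} \cdot L_{1}$ and $\bar{S} \cdot L_{2}$, which is impossible since they are unequal. Hence each $L_{i}$ is individually $\Gal(\bar{k}/k(p))$-invariant, so each is defined over $k(p)$, and $\pi^{-1}(p)$ is a union of two $k(p)$-rational lines, i.e.\ split in the sense of the definition. The only substantive input is the geometric description of singular fibres as nodal pairs of $(-1)$-curves, which uses the smoothness of $X$; granted this, the remaining numerical and Galois steps are routine.
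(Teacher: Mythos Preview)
Your argument is correct and follows essentially the same route as the paper: the section singles out one of the two geometric components of a singular fibre, and that component is therefore Galois-stable, hence the fibre is split. The only cosmetic difference is that the paper reaches ``the section meets each fibre at a smooth point'' directly from regularity of $X$ (a uniformiser at $p$ cannot lie in $\mathfrak{m}_x^2$ if $s(p)=x$), whereas you deduce the same fact numerically from $\bar S\cdot L_1+\bar S\cdot L_2=1$; both are standard and lead to the identical Galois-invariance conclusion.
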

\begin{proof}Let $s$ be a section of $\pi$. As $s$ is a well-defined map it will meet each fibre exactly once. Moreover, as $X$ is regular, any section will intersect each fibre at a smooth point \cite[\S 9 Cor 1.32]{Q02}. As the section $s$ can only meet singular fibres at smooth points, i.e. it intersects only one of the lines in the singular fibre, this line is Galois-invariant and the singular fibre is split over $k$.
\end{proof}
\begin{remark}\label{ConstructConicBundle}
If $X$ is a smooth cubic surface defined over $k$ with a line $L \subset X$, then $X$ has a conic bundle structure. Up to a change of variables we can assume $L:=\{x_{1}=0,x_{3}=0\}$. As $X$ is a smooth cubic surface we can write the defining equation for $X$ as $x_{1}C_{1}+x_{3}C_{2}$ where each $C_{i}$ is a form of degree 2 in variables $x_{0},x_{1},x_{2},x_{3}$. Consider the pencil of planes which contain $L$ 
$$
    P_{s,t}:sx_{1}-tx_{3}=0.
$$ Then the residual intersection of $P_{s,t}$ with $X$ is a conic which we denote by $C_{s,t}$. As $X$ is smooth $C_{1}$ and $C_{2}$ have no common intersection on $L$, hence at a point $Q:=[x_{0}:x_{1}:x_{2}:x_{3}]$ if $(x_{1},x_{3})=(0,0)$ then $(C_{1},C_{2})\neq (0,0)$. We can then define a morphism $\pi:X\rightarrow \mathbb{P}^{1}_{k}$ by
\begin{equation*}
  Q:=[x_{0},x_{1},x_{2},x_{3}] \mapsto
    \begin{cases}
      [x_{1}:x_{3}] \ &\text{if} \ (x_{1},x_{3}) \neq (0,0), \\
      [-C_{2}(Q):C_{1}(Q)] \ &\text{if} \ (C_{1}(Q),C_{2}(Q)) \neq (0,0),
    \end{cases}       
\end{equation*} i.e.\ $\pi$ is the projection away from the line $L$. Moreover, this is a dominant morphism and the fibre above each closed point of $\mathbb{P}^{1}_{k}$ is a conic, hence we have a conic bundle structure. If we consider the geometric point $[t:s] \in \mathbb{P}^{1}_{\bar{k}}$, the fibre above this point is the conic $C_{s,t}$. Consider the representative matrix of the generic fibre of $\pi$, which we denote by $T$. Denote by $\Delta(s,t)$ the determinant of $T$; this is a function in $s$ and $t$. Then the geometric points $[\beta:\alpha]$ of $\mathbb{P}^{1}_{\bar{k}}$ which have a singular fibre are exactly those that satisfy $\Delta(\alpha,\beta)=0$. We call $\pi$ the \emph{conic bundle map of $X$ associated to $L$}.
\end{remark}
\section{Smooth cubic surfaces}\label{SmoothCubicSurfaces}
We describe how to calculate the first Galois cohomology group for $\Pic(\bar{X})$ where $X$ is a smooth cubic surface over a field $k$ of characteristic not equal to 2 and 3. In the specific case where $X$ is defined over a number field $L$ and everywhere locally soluble, by the Hochschild-Serre spectral sequence and the fact $\HH^{3}(L,L^{*})=0$ we get an exact sequence
\[
    0\rightarrow \Pic X \rightarrow \HH^{0}(L,\Pic \bar{X})\rightarrow \Br_{0} X\rightarrow \Br_{1} X\rightarrow \HH^{1}(L,\Pic\bar{X})\rightarrow 0
\] which gives an isomorphism 
\[
    \Br_{1}X/\Br_{0}X=\Br X/\Br L \cong \HH^{1}(L,\text{Pic}\bar{X}).
\]
\begin{definition}
Let $X$ be a smooth cubic surface defined over $k$. The \emph{splitting field} $K$ of $X$ is the smallest normal extension of $k$ for which all the 27 lines on $\bar{X}$ are defined. Equivalently, $K$ is the smallest normal extension of $k$ such that $X\times_{k}K$ is split.
\end{definition}
\begin{remark}\label{GaloisActionRem}
The action of $\Gal(\bar{k}/k)$ on $\bar{X}$ permutes the 27 lines on $\bar{X}$, which induces an automorphism of $\Pic \bar{X}$, preserving intersection pairings and the anticanonical divisor. The group of all automorphisms of $\text{Pic} \bar{X}$ which preserve intersection pairings and the anticanonical divisor is the Weyl group of the $E_{6}$ root system, denoted by $W(E_{6})$ \cite[\S IV, 25]{Man86}. The Galois action on $\bar{X}$ gives rise to a homomorphism $\rho:\Gal(\bar{k}/k)\rightarrow W(E_{6})$. The Galois group $\Gal(\bar{k}/k)$ acts upon the lines of $\bar{X}$ via a finite group $G:=\text{Im}(\rho)$, and if $K$ is the splitting field of $X$, there is an isomorphism $\text{Im}(\rho) \cong \Gal(K/k)$. Once one knows the Galois action on the lines of $\bar{X}$, one can use Magma to compute $\HH^{1}(k,\Pic \bar{X})$.
\end{remark} 
\section{Geometry of the compactification}\label{GeometryOfCompact}
 When calculating the integral Brauer-Manin obstruction for the affine surfaces (\ref{AffineCubicIntro}) we need to calculate the Brauer group of $U_{n}:=\mathcal{U}_{n}\times_{\mathbb{Z}} \mathbb{Q}$. If the compactification $X_{n}$ of $U_{n}$ is smooth, by Grothedieck's purity theorem \cite[Thm 3.7.1]{CTS21} there is an inclusion $\Br X_{n} \hookrightarrow \Br U_{n}$. Therefore, the first step in calculating $\Br U_{n}$ is to calculate $\Br X_{n}$. Throughout Section \ref{GeometryOfCompact} fix a field $k$ of characteristic not equal to 2 or 3 and a polynomial $f(u)=u^3+a_{2}u^2+a_{1}u^2+a_{0}\in k[u]$. For $n\in k$ denote by $U_{n}$ the affine cubic surface $$
 U_{n}: f(u_{1})+f(u_{2})+f(u_{3})=n \subset \mathbb{A}_{k}^{3}.
 $$ By completing the cube and making a change of variables we can assume that $$
U_{n}: F(1,u_{1})+F(1,u_{2})+F(1,u_{3})=n\subset \mathbb{A}^{3}_{k}
$$ where $F(x_{0},x_{i})=x_{i}^3+ax_{i}x_{0}^2+bx_{0}^3\in k[x_{0},x_{i}]$. The compactification of $U_{n}$ is $$
X_{n}:F(x_{0},x_{1})+F(x_{0},x_{2})+F(x_{0},x_{3})=nx_{0}^3\subset \mathbb{P}^{3}_{k}
$$ where $u_{i}=x_{i}/x_{0}$ for $i=1,2,3$. We require $X_{n}$ to be smooth to apply Grothedieck's purity theorem; in Section \ref{SmoothnessSection1} we show that outside of a finite set of $n\in k$, the cubic surface $X_{n}$ is always smooth. This coincides with the aim of Theorem \ref{BIGTHM}, as we want to show that outside a finite set of $n \in \mathbb{Z}$, the affine cubic surface (\ref{AffineCubicIntro}) has no integral Brauer-Manin obstruction, therefore we can restrict ourselves to the cases where $X_{n}$ is smooth for any future calculations. 
\subsection{Smooth compactifications}\label{SmoothnessSection1} 
\begin{lemma}\label{SmootnessLemma}Define
\begin{equation*}
    X: F(x_{0},x_{1})+F(x_{0},x_{2})+F(x_{0},x_{3})-nx_{0}^3 \subset \mathbb{P}^{3}_{k}\times \mathbb{A}^{1}_{k}
\end{equation*} where $\mathbb{A}^{1}_{k}$ has the coordinate $n$. Then the projection map $\pi: X \rightarrow \mathbb{A}^{1}_{k}$ has a smooth generic fibre.
\end{lemma}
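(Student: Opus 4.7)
The plan is to show that the set of $n \in \overline{k}$ for which the fibre $X_n$ fails to be smooth is a proper (indeed finite) Zariski-closed subset of $\mathbb{A}^1_k$, so that the generic fibre, corresponding to $n$ transcendental over $k$, is automatically smooth. Concretely, I would work with the homogeneous defining polynomial
\begin{equation*}
G(x_0,x_1,x_2,x_3;n) \;=\; x_1^3+x_2^3+x_3^3 + a(x_1+x_2+x_3)x_0^2 + (3b-n)x_0^3
\end{equation*}
and compute the partial derivatives with respect to each $x_i$ (remember that the characteristic restriction, not equal to $2$ or $3$, makes this computation well behaved).

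The second step is to analyse the system $\partial G/\partial x_i = 0$ for $i=0,1,2,3$. The three equations $\partial G/\partial x_i = 3x_i^2 + ax_0^2 = 0$ for $i=1,2,3$ immediately force $x_i^2 = -\tfrac{a}{3}x_0^2$. Setting $x_0 = 0$ then forces each $x_i=0$, which is impossible in projective space, so one may normalise $x_0 = 1$ and obtain $x_i = \pm \alpha$ with $\alpha^2 = -a/3$; if $a=0$ all $x_i$ vanish. In particular, singular points of a fibre live above only finitely many points of the projective plane $\{x_0 = 1\}$.

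The third step is to feed these candidate points into $\partial G/\partial x_0 = 0$ (and the equation $G=0$ itself): both equations reduce to the single constraint
\begin{equation*}
n \;=\; 3b + \tfrac{2a}{3}(x_1+x_2+x_3),
\end{equation*}
where $x_1+x_2+x_3$ ranges over the finite set $\{\pm \alpha, \pm 3\alpha\}$ (or $n=3b$ in the degenerate case $a=0$). Hence the projection of the singular locus of $X$ to $\mathbb{A}^1_k$ is contained in the finite set of algebraic values of $n$ listed above, and so misses the generic point of $\mathbb{A}^1_k$. The generic fibre $X_{\eta}$ is therefore smooth.

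There is essentially no hard step: the whole argument is a direct Jacobian computation. The only thing to be careful about is the bookkeeping in characteristic, which is why the standing assumption $\mathrm{char}(k) \neq 2,3$ is used (in fact only $\mathrm{char}(k) \neq 3$ is needed to isolate $x_i^2$ and to avoid the $3$ in the $\partial/\partial x_0$ computation).
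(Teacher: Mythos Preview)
Your proposal is correct and rests on the same Jacobian computation as the paper. The only difference is organizational: the paper works directly over the function field $k(n)$ and argues that a hypothetical singular point on $X_\eta$ would force $n=3b+\tfrac{2a}{3x_0}(x_1+x_2+x_3)$, contradicting the transcendence of $n$ over $k$; you instead work over $\bar{k}$, enumerate the finitely many algebraic values of $n$ for which $X_n$ is singular, and conclude that the generic point is missed. Your route has the minor bonus of proving Proposition~\ref{SmoothFinite} (finiteness of the singular fibres) in the same breath, whereas the paper deduces that separately from the openness of the smooth locus.
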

\begin{proof} The generic fibre is  $X_{\eta}:=X_{n} \times_{k} k(n) \subset \mathbb{P}^{3}_{k(n)}$. Let
$$
 G(x_{0},x_{1},x_{2},x_{3}) := x_{1}^3+x_{2}^3+x_{3}^3+a(x_{1}+x_{2}+x_{3})x_{0}^2+(3b-n)x_{0}^3,
$$
then if there exists a non-smooth point $p=[x_{0}:x_{1}:x_{2}:x_{3}]$ lying on $X_{\eta}$, by applying the Jacobian criterion for smoothness we see that the partial derivatives of $G$ will vanish when evaluated at $p$. Explicitly, $\frac{\partial G}{\partial x_{i}}(p) = 3x_{i}^2+ax_{0}^2=0$ for $i=1,2,3$ and $\frac{\partial G}{\partial x_{0}}(p) =2a(x_{1}+x_{2}+x_{3})x_{0}+3(3b-n)x_{0}^2=0$. As $\frac{\partial G}{\partial x_{0}}(p)=0$ this implies that 
$
(2a(x_{1}+x_{2}+x_{3})+3(3b-n)x_{0})x_{0}=0,
$ hence either $x_{0}=0$ or $2a(x_{1}+x_{2}+x_{3})+3(3b-n)x_{0}=0$. If $x_{0}=0$, this implies $x_{1}=x_{2}=x_{3}=0$, hence such a non-smooth point cannot exist. We can now assume that $x_{0}=1$, using the partial derivative $\frac{\partial G}{\partial x_{i}}$ we see $x_{i}=\pm\sqrt{-a/3}$ for $i=1,2,3$. If $x_{1}=x_{2}=x_{3}=\sqrt{-a/3}$, then \[
G(1,\sqrt{-a/3},\sqrt{-a/3},\sqrt{-a/3})=3(\sqrt{-a/3})^3+3a\sqrt{-a/3}+(3b-n)=0,
\] hence $n = 3b-\frac{2}{3} \, \sqrt{3} (-a)^{3/2} $. As $3b-\frac{2}{3} \, \sqrt{3} (-a)^{3/2}\in \bar{k}$ but $n\not \in \bar{k}$, this is a contradiction. The other possibilities for $x_{1},x_{2},x_{3}$ lead to the same contradiction, hence we see that the generic fibre $X_{\eta}$ is smooth over $k(n)$.
\end{proof}
\begin{lemma}\label{lemma: not seperable implies singular} If the cubic polynomial $f_{1}(x)=x^3+ax+3b-n\in k[x]$ is not separable then the surface $X_{n}$ is singular.
\end{lemma}
\begin{proof}
$f_{1}$ is not separable if and only if the discriminant of $f_{1}$ is zero i.e. $4a^3+27(3b-n)^2=0$. Then the surface $X_{n}$ has a singular point $[x_{0}:x_{1}:x_{2}:x_{3}]=[1:\sqrt{-a/3}:\sqrt{-a/3}:-\sqrt{-a/3}]$.
\end{proof}
\begin{proposition}\label{SmoothFinite} There are only finitely many choices of $n \in k$ such that $X_{n}$ is not smooth.
\end{proposition}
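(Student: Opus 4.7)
The plan is to deduce this from Lemma \ref{SmootnessLemma} by a standard spreading-out argument. Consider the total space $X \subset \mathbb{P}^3_k \times \mathbb{A}^1_k$ together with the projection $\pi : X \to \mathbb{A}^1_k$ whose fibre over a closed point $n \in \mathbb{A}^1_k(k)$ is exactly $X_n$. Since $X$ is closed in $\mathbb{P}^3_k \times \mathbb{A}^1_k$, the morphism $\pi$ is proper.

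Next I would consider the non-smooth locus $Z \subset X$ of $\pi$, that is, the set of points at which $\pi$ fails to be smooth. This is a closed subset of $X$ (it is the vanishing locus of appropriate minors of the relative Jacobian, together with the singular locus of $X$ itself). A fibre $X_n$ fails to be smooth over the residue field at $n$ precisely when $Z$ meets $\pi^{-1}(n)$; equivalently, the set of $n \in \mathbb{A}^1_k$ for which $X_n$ is not smooth is exactly $\pi(Z)$. Since $\pi$ is proper, $\pi(Z)$ is closed in $\mathbb{A}^1_k$.

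Lemma \ref{SmootnessLemma} says that the generic fibre $X_\eta$ is smooth, i.e.\ $Z$ does not dominate $\mathbb{A}^1_k$, so the generic point of $\mathbb{A}^1_k$ does not lie in $\pi(Z)$. Hence $\pi(Z)$ is a proper closed subset of the one-dimensional irreducible scheme $\mathbb{A}^1_k$, and therefore consists of only finitely many closed points. This finite set is exactly the set of $n$ for which $X_n$ is not smooth, which is the desired conclusion.

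There is no real obstacle here: the only thing to be slightly careful about is that smoothness of the fibre over a closed point is genuinely detected by the relative smoothness locus of $\pi$ (rather than the absolute smoothness locus of $X$), but since $\mathbb{A}^1_k$ is smooth over $k$, the two notions agree on fibres in the relevant sense. The argument is essentially a one-line consequence of properness of $\pi$ and the generic smoothness established in the previous lemma.
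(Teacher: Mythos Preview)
Your argument is correct and is essentially the same as the paper's: both use Lemma~\ref{SmootnessLemma} to conclude that the non-smooth locus of $\pi$ misses the generic fibre, hence its image in $\mathbb{A}^1_k$ is a proper closed subset and therefore finite. Your version is in fact slightly more careful, since you explicitly invoke properness of $\pi$ to ensure $\pi(Z)$ is closed, a point the paper leaves implicit.
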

\begin{proof} We saw in Lemma \ref{SmootnessLemma} that the generic fibre of the projection map $\pi: X \rightarrow \mathbb{A}^{1}_{k}$ is smooth. This means that the smooth locus of the morphism $\pi$ is an open subset of $\mathbb{A}^{1}_{k}$ which contains the generic point, thus there can only be finitely many singular fibres.
\end{proof}

For the rest of Section \ref{GeometryOfCompact}, fix a choice of $n\in k$ is such that $X_{n}$ is smooth.
\subsection{Conic bundle structure on $X_{n}$}
We can now explicitly describe the conic bundle structure on $X_{n}$ over a possible finite extension $l$ of $k$. Let
$f_{1}(x)=x^3+ax+3b-n$. If $f_{1}(x)$ is irreducible, let $l:=k[x]/(f_{1}(x))$ and let $r$ be the image of $x$ in $l$. Otherwise, let $l:=k$ and let $r$ be a root of $f_{1}(x)$ in $l$. Then $X_{n}\times_{k}l$ contains a line $L$, where $$
L:x_{2}+x_{3}=0,\  x_{1}-rx_{0}=0.
$$ We then run the construction given in Remark \ref{ConstructConicBundle} and this defines the conic bundle map $\pi$ of $X_{n}$ associated to the line $L$. The points of $\mathbb{P}^{1}_{l}$ over which $\pi$ has singular fibres are \begin{equation*}
    p = \begin{cases}
      (t),\\
      (s+t),\\
      \left(at^{3}+3r^2st^{2}-3r^2s^{2}t +(4a+3r^2)s^{3} \right).
    \end{cases}
\end{equation*} 
The singular fibres of $\pi$ over $\bar{k}$ are defined over the splitting field of $ax^3+3r^2x^{2}-3r^2x+(4a+3r^2)$ over $l$.
\begin{remark}\label{rem: label lines}
To fix some notation for the rest of the paper, we label nine lines on $\bar{X}_{n}$. Let $r_{1},r_{2},r_{3}$ be roots of $f_{1}(x)$ in $\bar{k}$. The surface $\bar{X}_{n}$ contains the lines\begin{align*}
L_{1,1}&:x_{2}+x_{3}=x_{1}-r_{1}x_{0}=0, \ L_{1,2}:x_{2}+x_{3}=x_{1}-r_{2}x_{0}=0,\ L_{1,3}:x_{2}+x_{3}=x_{1}-r_{3}x_{0}=0,\\
    L_{2,1}&:x_{1}+x_{3}=x_{2}-r_{1}x_{0}=0,\ L_{2,2}:x_{1}+x_{3}=x_{2}-r_{2}x_{0}=0, \ L_{2,3}:x_{1}+x_{3}=x_{2}-r_{3}x_{0}=0,\\
    L_{3,1}&:x_{1}+x_{2}=x_{2}-r_{1}x_{0}=0, \ L_{3,2}:x_{1}+x_{2}=x_{3}-r_{2}x_{0}=0,\ L_{3,3}:x_{1}+x_{2}=x_{3}-r_{3}x_{0}=0.
\end{align*} Note that each row of lines define three coplanar lines, meeting at an Eckardt point.
\end{remark}
\subsection{Splitting field}\label{SplittingFieldSection}
\begin{lemma}\label{ExistsSection}
Denote by $L$ the splitting field of $f_{1}$ over $k$. The conic bundle map associated to $L_{1,1}$ on $X_{n}$ admits a section over $L$, namely $L_{2,2}$.
\end{lemma}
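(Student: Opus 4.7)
The plan is to exhibit a line $L'$ on $X_{n}$ that is defined over $L$ and disjoint from $L_{1,1}$. Any such $L'$ automatically provides a section of the conic bundle $\pi$: for every plane $\Pi$ through $L_{1,1}$, the intersection $\Pi \cap L'$ is a single point lying on the residual conic (using that $L' \not\subset \Pi$ since $L_{1,1},L'$ are disjoint and $L_{1,1}\subset\Pi$), and this yields a morphism $L' \to \mathbb{P}^{1}_{L}$ inverse to $\pi|_{L'}$.

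My candidate is $L_{2,2}: x_{1}+x_{3}=0,\ x_{2}-r_{2}x_{0}=0$, which is defined over $L$ since the splitting field contains every root of $f_{1}$. A direct parametrisation combined with the identity $f_{1}(r_{2})=r_{2}^{3}+ar_{2}+3b-n=0$ shows $L_{2,2}\subset X_{n}$. To verify disjointness from $L_{1,1}$, I would combine the four defining equations to force $x_{1}=r_{1}x_{0}$, $x_{2}=r_{2}x_{0}$ and $x_{1}=x_{2}$, whence $(r_{1}-r_{2})x_{0}=0$. When $r_{1}\neq r_{2}$, this gives $x_{0}=0$, and the equations then force $x_{1}=x_{2}=x_{3}=0$, an impossibility in $\mathbb{P}^{3}$. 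Thus $L_{2,2}$ delivers the required section whenever $r_{1}\neq r_{2}$; if $r_{1}=r_{2}$, the identical argument applied to $L_{2,3}$ produces a section provided $r_{1}\neq r_{3}$. Note that $L_{1,j}$ for $j\neq 1$ cannot be used, since those lines meet $L_{1,1}$ at the Eckardt point $[0:0:1:-1]$.

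The only real obstacle is ruling out the possibility that $f_{1}$ has a triple root, in which case none of the $L_{2,j}$ is disjoint from $L_{1,1}$. A triple root forces $f_{1}'(x)=3x^{2}+a$ to vanish at the root, hence $a=0$, and then $f_{1}(x)=x^{3}+(3b-n)$ has a triple root only at $x=0$ with $n=3b$. Under these conditions the Jacobian criterion (as in the proof of Lemma \ref{SmootnessLemma}) exhibits $[1:0:0:0]$ as a singular point of $X_{n}$, contradicting our standing hypothesis that $X_{n}$ is smooth. Hence $f_{1}$ has at least two distinct roots over $L$, and an appropriate choice among $L_{2,2},L_{2,3}$ furnishes the desired section.
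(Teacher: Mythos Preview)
Your proof is correct and follows essentially the same approach as the paper: both exhibit the line $L_{2,2}$, show it is disjoint from $L_{1,1}$ precisely when $r_{1}\neq r_{2}$, and use it as the section. The only minor difference is in the edge case: the paper argues that $r_{1}=r_{2}$ already forces $X_{n}$ to be singular, whereas you allow a double root and pass to $L_{2,3}$, ruling out only the triple-root case.
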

\begin{proof} It is sufficient to show that $L_{2,2}$ does not intersect $L_{1,1}$. Let $P$ be the plane $x_{2}+x_{3}=0$, this plane contains $L_{1,j}$ for $j\in \{1,2,3\}$. The line $L_{2,2}$ intersects $P$ at the point $p=[1:r_{2}:r_{2}:-r_{2}]$. Moreover, if $p\in L_{1,1}$ this would imply that $r_{1}=r_{2}$. This is only possible if $f_{1}$ is not separable, by Lemma \ref{lemma: not seperable implies singular} this implies $X_{n}$ is not smooth contradicting our assumption on $X_{n}$.
\end{proof}
\begin{proposition}\label{SplittingFieldProp} $X_{n}$ is split over the splitting field of the two polynomials 
$$ f_{1}(x)=x^3+ax+3b-n\in k[x], \ g_{1}(x) =ax^3+3r^2(x^{2}-x+1)+4a \in k(r)[x]$$
where if $f_{1}$ is reducible over $k$, $r$ is a root of $f_{1}$; if $f_{1}$ is irreducible over $k$, $r$ is the image $x$ in the field extension $k[x]/(f_{1}(x))$.
\end{proposition}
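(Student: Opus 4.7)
The plan is to apply Lemma \ref{EnoughLines} to the conic bundle map $\pi$ on $X_n$ associated to $L_{1,1}$, after base-changing to $K$, the compositum of the splitting fields of $f_1$ and $g_1$ over $k$.

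First I would check that $\pi$ is defined on $X_n \times_k K$. Since $K$ contains the splitting field of $f_1$, every root of $f_1$, and in particular $r_1$, lies in $K$; hence the line $L_{1,1}$ is defined over $K$, and the conic bundle construction of Remark \ref{ConstructConicBundle} applied to $L_{1,1}$ yields a conic bundle map $\pi: X_n \times_k K \to \mathbb{P}^1_K$.

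Next I would verify the three hypotheses of Lemma \ref{EnoughLines}. The existence of a $K$-point on $X_n \times_k K$ and, more strongly, the splitness of every singular fibre over $K$, both follow from Lemma \ref{ExistsSection} combined with Lemma \ref{SplitFibres}: the former produces a section of $\pi$ over the splitting field of $f_1$, which is contained in $K$, and the latter then forces each singular fibre to be split over $K$. It remains to check that every geometric singular fibre is already defined over $K$. By the description of the singular locus of $\pi$ given just before the proposition, the base points of the singular fibres are $(t)$, $(s+t)$, and the three roots in $\mathbb{P}^1_{\bar k}$ of the cubic $g_1(x) = ax^3 + 3r^2 x^2 - 3r^2 x + (4a + 3r^2)$. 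The first two points are $K$-rational, and the roots of $g_1$ lie in the splitting field of $g_1$ over $k(r_1) \subseteq K$, hence in $K$.

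With all the hypotheses of Lemma \ref{EnoughLines} in place, I conclude that $X_n \times_k K$ is split, proving the proposition. The proof is essentially a packaging of Lemmas \ref{EnoughLines}, \ref{SplitFibres}, and \ref{ExistsSection}, together with the explicit description of the discriminant locus of $\pi$; there is no serious obstacle beyond making sure the field $K$ is chosen large enough to simultaneously contain $r_1$ (to define the conic bundle and produce the section) and to contain the roots of $g_1$ (to define the singular fibres as $K$-points of $\mathbb{P}^1$).
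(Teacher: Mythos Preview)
Your proposal is correct and follows essentially the same route as the paper: base-change to the compositum $K$ of the splitting fields of $f_1$ and $g_1$, note that the singular fibres of the conic bundle associated to $L_{1,1}$ are all defined over $K$, invoke Lemma~\ref{ExistsSection} to get a section and Lemma~\ref{SplitFibres} to split the singular fibres, then conclude by Lemma~\ref{EnoughLines}. The paper's proof is just a more compressed version of what you wrote.
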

\begin{proof}
Let $K$ be the splitting field of $f_{1}$ and $g_{1}$ over $k$ and $\pi:X_{n} \rightarrow \mathbb{P}^{1}_{K}$ be the conic bundle map associated to $L_{1,1}$ on $X_{n}$. All singular fibres over $\bar{k}$ are defined over $K$ and by Lemma \ref{ExistsSection} there exists a section of $\pi$, hence by Lemma \ref{SplitFibres} every fibre of $\pi$ is split. By Lemma \ref{EnoughLines} $X_{n}$ is split over $K$.
\end{proof}
Even though we have polynomials that define the splitting field for $X_{n}$, one of the polynomials is defined over $k$ and the other over a possible finite extension of $k$. To understand the Galois action on the lines of $X_{n}$ it will be easier if both polynomials are always defined over $k$; the rest of the results in Section \ref{SplittingFieldSection} are devoted to obtaining two such polynomials over $k$ for all but finitely many choices of $n\in \mathbb{Z}$. In the case $a=0$ we see that $f_{1}(x)=x^3+3b-n$ and $g_{1}(x)=3r^2(x^2-x+1)$ and as the splitting field of $f_{1}$ will contain a third root of unity $\zeta_{3}$ we have that $x^2-x+1$ will factor as $(x+\zeta_{3})(x+\zeta_{3}^2)$, hence the splitting field of $f_{1}$ and $g_{1}$ is the same as the splitting field of $f_{1}$. For the rest of Section \ref{SplittingFieldSection} we assume $a$ and $4a^3+27(3b-n)^2$ are non-zero. Let $l:=k(r)=k[x]/(f_{1}(x))$ if $f_{1}$ is irreducible or $l:=k$ where $r$ is a root of $f_{1}$ if $f_{1}$ is reducible.
\begin{lemma}\label{KernelOneWay}
Denote by $\xi_{1}$ the $l$-algebra homomorphism $$
\xi_{1}: l[x]\rightarrow l[y],\  x\mapsto -a y^{2} - {\left(3 \, r^{2} - a\right)}y +(4a+3r^2)
$$ then the image of the ideal generated by $f_{2}(x)=x^3 - 12ax^2 + 36a^2x + 27(3b-n)^2 + 4a^3$ is contained in the ideal generated by $g_{1}(y)$.
\end{lemma}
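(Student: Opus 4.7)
The plan is to verify the claim by a direct polynomial identity in $l[y]$. Write $\alpha(y) := \xi(x) = -ay^2 - (3r^2-a)y + (4a+3r^2) \in l[y]$. The image of the ideal $(f_2(x)) \subseteq l[x]$ under $\xi$ is the principal ideal generated by $f_2(\alpha(y))$, which has degree six in $y$. Since $a \neq 0$ by assumption, the leading coefficient of $g_1$ is a unit in $l$, so the desired inclusion of ideals is equivalent to the divisibility $g_1(y) \mid f_2(\alpha(y))$ in $l[y]$.

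A useful preliminary step is to rewrite $f_2$ over $l$ using the fact that $r$ is a root of $f_1(x) = x^3 + ax + (3b-n)$: this gives $3b - n = -r(r^2 + a)$, hence $27(3b-n)^2 = 27\, r^2 (r^2+a)^2$, and therefore
\[
f_2(x) = x^3 - 12ax^2 + 36a^2 x + 27\, r^2 (r^2+a)^2 + 4 a^3 \in l[x].
\]
I would then substitute $x = \alpha(y)$, expand to obtain a polynomial of degree six in $y$ with coefficients in $l = k(r)$, and perform Euclidean division by $g_1(y) = ay^3 + 3r^2 y^2 - 3r^2 y + (4a+3r^2)$. This produces an explicit quotient $q(y) \in l[y]$ of degree three together with a remainder of degree at most two; the content of the lemma is that this remainder vanishes identically. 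Equivalently, one exhibits $q(y)$ and verifies the polynomial identity $f_2(\alpha(y)) = g_1(y)\, q(y)$ coefficient by coefficient.

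The main obstacle is purely the bulk of the algebraic manipulation: cubing the quadratic $\alpha(y)$ already produces many cross terms, and at each stage the coefficients are polynomials in $a$ and $r$ that must be organized carefully. I do not expect any conceptual subtlety beyond using the single relation $r^3 = -ar - (3b-n)$ to absorb $(3b-n)^2$ into $27 r^2 (r^2+a)^2$; once this is done, the identity is a finite polynomial check over $l$ that is most efficiently carried out in a computer algebra system. With the identity in hand, the inclusion $\xi\bigl((f_2(x))\bigr) \subseteq (g_1(y))$ is immediate.
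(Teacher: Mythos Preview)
Your approach is essentially the same as the paper's: both perform Euclidean division of $f_2(\xi(x))$ by $g_1(y)$ in $l[y]$ and use the relation $r^3+ar+(3b-n)=0$ to see that the remainder vanishes. The only organizational difference is that the paper carries out the division first and then factors the constant remainder as $-27(r^3+ar+3b-n)(r^3+ar-3b+n)$, whereas you absorb the relation into $f_2$ at the outset; either way the computation is a routine (if bulky) polynomial identity.
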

\begin{proof}
The image of $f_{2}(x)$ under $\xi_{1}$ is $f_{2}(\xi_{1}(x))$. By applying Euclidean division to $f_{2}(\xi_{1}(x))$ and $g_{1}(y)$ we see that $f_{2}(\xi_{1}(x))=p(y)g_{1}(y)+q(y)$ where
\begin{align*}
    p(y)&=-a^{3} y^{3} + 18 \, a r^{4} - 6 \, a^{2} r^{2} + 5 \, a^{3} - 3 \, {\left(2 \, a^{2} r^{2} - a^{3}\right)} y^{2} - 3 \, {\left(3 \, a r^{4} - 5 \, a^{2} r^{2} + a^{3}\right)} y,\\
    q(y)&=-27 \, r^{6} - 54 \, a r^{4} - 27 \, a^{2} r^{2} + 243 \, b^{2} - 162 \, b n + 27 \, n^{2}.
\end{align*}
Factoring $q(y)$ we see that $q(y)=(-27 \, r^{3} - 27 \, a r + 81 \, b - 27 \, n)(r^{3} + a r + 3 \, b - n)$, hence $q(y)=0$ in $l[y]$ as $r$ is root of $f_{1}$. Thus $\xi_{1}((f_{2}))\subseteq (g_{1})$.
\end{proof}

\begin{lemma}\label{KernelOtherWay}
Denote by $\xi_{2}$ the $l$-algebra homomorphism $\xi_{2}: l[y]\rightarrow l[x]$ where $y$ is mapped to\[
 - \frac{{\left(6 \, a r^{2} + 4 \, a^{2} - 9 \, {\left(3 \, b - n\right)} r\right)} x^{2}-{\left(36 \, a^{2} r^{2} + 28 \, a^{3} - 54 \, a {\left(3 \, b - n\right)} r + 27 \, {\left(3 \, b - n\right)}^{2}\right)} x}{3 \, {\left(4 \, a^{3} + 27 \, {\left(3 \, b - n\right)}^{2}\right)} a} + \frac{r^{2}}{a}+\frac{4}{3}
\] then the image of the ideal generated by $g_{1}(y)$ is contained in the ideal generated by $f_{2}(x)$. 
\end{lemma}
\begin{proof}
This was done by a SageMath computation.
\end{proof}
\begin{lemma}\label{lemma: iso of cubic extensions} The $l$-algebra morphisms \[
\xi_{1}:l[x]/(f_{2}(x))\rightarrow l[y]/(g_{1}(y)) \text{ and }\xi_{2}:l[y]/(g_{1}(y))\rightarrow l[x]/(f_{2}(x))
\] are well defined and inverses of each other.
\end{lemma}
\begin{proof}
The fact that $\xi_{1}$ and $\xi_{2}$ are well defined follows from Lemmas \ref{KernelOneWay},\ref{KernelOtherWay}. To check they are inverses was done by a SageMath computation.
\end{proof}

\begin{proposition}\label{ProofOfkAlg2}
 Consider the cubic polynomials $$f_{2}(x) = x^3 - 12ax^2 + 36a^2x + 27(3b-n)^2 + 4a^3\  \text{and} \ \  g_{1}(x)=ax^3+3r^2(x^2-x+1)+4a.$$ defined over $l$. Then there exists a $l$-algebra isomorphism 
\[
l[x]/(f_{2}(x))\xrightarrow[]{\sim}l[y]/(g_{1}(y)).
\]
\end{proposition}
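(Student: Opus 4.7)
The plan is to construct the isomorphism directly as the map induced by the homomorphism $\xi$ from Lemma \ref{KernelOneWay}, and then verify bijectivity by a dimension count.

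First, I would note that the two versions of $g_1$ appearing in the statement and in Lemma \ref{FullKernel} generate the same ideal of $l[y]$: since $a$ is invertible in $l$ (we are in the case $a\neq 0$), the polynomial $ay^3 + 3r^2y^2 - 3r^2 y + (4a+3r^2)$ used in Lemmas \ref{KernelOneWay} and \ref{FullKernel} is just $a$ times the polynomial $y^3 + \tfrac{3r^2}{a}(y^2-y+1)+4$ of the proposition, so the quotient ring $l[y]/(g_1(y))$ is unambiguous.

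Next, I would invoke Lemma \ref{FullKernel}, which tells us that the composite $\phi=\pi\circ\xi\colon l[x]\to l[y]/(g_1(y))$ has kernel exactly $(f_2)$. By the first isomorphism theorem this induces an \emph{injective} $l$-algebra homomorphism
\[
\bar{\phi}\colon l[x]/(f_2(x))\hookrightarrow l[y]/(g_1(y)).
\]

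Finally, to upgrade injectivity to bijectivity I would use a dimension count: both $f_2(x)$ and $g_1(y)$ are monic (after normalising $g_1$ by the unit $a$) polynomials of degree $3$ in a single variable over $l$, so each of $l_1=l[x]/(f_2(x))$ and $l_2=l[y]/(g_1(y))$ is a free $l$-module of rank $3$. An injective $l$-linear map between free $l$-modules of the same finite rank is necessarily an isomorphism (equivalently, for any field $F$ mapping out of $l$, the induced map of $3$-dimensional $F$-vector spaces is an iso). Hence $\bar{\phi}$ is the desired $l$-algebra isomorphism.

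There is essentially no obstacle left at this stage; the real work has already been done in Lemmas \ref{KernelOneWay} and \ref{FullKernel}, which identify $\ker(\phi)$ precisely. The only subtle point to be careful about is the normalisation issue above (the two forms of $g_1$), and the observation that $l_2$ is free of rank $3$ over $l$, which requires $a\in l^\times$; this is guaranteed by the running assumption that $a\neq 0$ at the start of the subsection.
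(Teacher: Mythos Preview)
Your proof is correct and follows essentially the same approach as the paper: apply the first isomorphism theorem to $\phi$ using Lemma \ref{FullKernel} to get an injection $l_1\hookrightarrow l_2$, then conclude by the dimension count that both sides are rank-$3$ over $l$. Your version is slightly more careful in spelling out the normalisation of $g_1$ by the unit $a$, but otherwise it matches the paper's argument exactly.
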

\begin{proof}
This follows from Lemma \ref{lemma: iso of cubic extensions}.
\end{proof}
\begin{proposition}\label{prop: changing g1 to f2}
  If $a$ and $4a^3+27(3b-n)^2$ are non-zero, then the compositum of the splitting fields of $f_{1}$ and $f_{2}$ is isomorphic to the compositum of the splitting fields of $f_{1}$ and $g_{1}$.
\end{proposition}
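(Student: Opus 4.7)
The plan is to reduce the statement to Proposition~\ref{ProofOfkAlg2} by a base-change argument that replaces the base ring $l$ with $K_{1}$. Fix an algebraic closure $\bar{k}$ of $k$ and embed $l=k(r)$ into $\bar{k}$ via a choice of root $r$ of $f_{1}$. Let $K_{1},K_{2}\subset\bar{k}$ be the splitting fields of $f_{1}$ and $f_{2}$ over $k$, and let $M\subset\bar{k}$ be the splitting field of $g_{1}$ over $l$. Since $l\subseteq K_{1}$, it would actually suffice, and in fact be strictly stronger than the stated isomorphism, to prove the equality of subfields $K_{1}K_{2}=K_{1}M$ inside $\bar{k}$.

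First I would tensor the $l$-algebra isomorphism $l[x]/(f_{2})\cong l[y]/(g_{1})$ of Proposition~\ref{ProofOfkAlg2} with $K_{1}$ over $l$. Using the natural identifications
\begin{equation*}
    l[x]/(f_{2})\otimes_{l}K_{1}\;\cong\;K_{1}[x]/(f_{2}),\qquad l[y]/(g_{1})\otimes_{l}K_{1}\;\cong\;K_{1}[y]/(g_{1}),
\end{equation*}
this gives an isomorphism of finite \'etale $K_{1}$-algebras $K_{1}[x]/(f_{2})\cong K_{1}[y]/(g_{1})$. Next I would invoke the fact that for a polynomial $h\in F[x]$, the splitting field of $h$ inside a fixed algebraic closure $\bar{F}$ is the compositum of the images of all $F$-algebra homomorphisms $F[x]/(h)\to\bar{F}$; in particular this compositum depends only on the $F$-algebra isomorphism class of $F[x]/(h)$. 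Applied with $F=K_{1}$, this yields that the splitting field of $f_{2}$ over $K_{1}$ coincides, as a subfield of $\bar{k}$, with the splitting field of $g_{1}$ over $K_{1}$. The former equals $K_{1}K_{2}$ since $K_{2}$ is generated over $k$ by the roots of $f_{2}$, and the latter equals $K_{1}M$ using $l\subseteq K_{1}$ so that $K_{1}\cdot M=K_{1}(\text{roots of }g_{1})$. This yields $K_{1}K_{2}=K_{1}M$, and hence the desired isomorphism.

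The delicate step is concluding \emph{equality} (not merely isomorphism) of the two splitting fields inside $\bar{k}$ from the $K_{1}$-algebra isomorphism of \'etale algebras. One clean way to justify this is that any $K_{1}$-algebra isomorphism $\sigma$ between $K_{1}[x]/(f_{2})$ and $K_{1}[y]/(g_{1})$ induces, by $\phi\mapsto\phi\circ\sigma^{-1}$, a bijection on $K_{1}$-algebra homomorphisms into $\bar{k}$ that preserves images setwise as subfields of $\bar{k}$; equivalently, both splitting fields are finite Galois extensions of $K_{1}$ in $\bar{k}$ corresponding to the same open normal subgroup of $\Gal(\bar{k}/K_{1})$, and therefore coincide. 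Everything else in the argument is formal once Proposition~\ref{ProofOfkAlg2} is in hand.
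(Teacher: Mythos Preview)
Your proof is correct and follows essentially the same approach as the paper, which simply states that the result ``follows immediately from Proposition~\ref{ProofOfkAlg2}.'' You have carefully unpacked what the paper leaves implicit: the base-change of the $l$-algebra isomorphism to $K_{1}$ and the observation that the splitting field of a polynomial over $K_{1}$ depends only on the $K_{1}$-algebra isomorphism class of the associated quotient ring, yielding the equality $K_{1}K_{2}=K_{1}M$ inside $\bar{k}$.
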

\begin{proof}
This follows immediately from Proposition \ref{ProofOfkAlg2}.
\end{proof}
\subsection{Brauer group of compactification} We have shown that the splitting field $K$ of $X_{n}$ is the compositum of the splitting fields of $f_{1}$ and $g_{1}$. We are now in a position to determine the Brauer group of $X_{n}$. In the case $a=0$ this is completely determined by Colliot-Thélène and Wittenberg \cite[Prop 2.1]{CTW12}. We determine the Brauer group in the cases $a\neq 0$.

\begin{remark}[{\cite{KC10}},Specialisations I] In this remark we explain that if \[
f:=f(t_{1},...,t_{m},x)\in \mathbb{Q}(t_{1},...,t_{m})[x]
\] is irreducible and $t_{i}$ are intermediates, given $n\in \mathbb{A}^{m}(\mathbb{Q})$ such that $n$ is not a pole of any of the coefficients of $f$ and $g(x):=f(n,x)\in \mathbb{Q}[x]$ is separable then there is an embedding $\Gal(g)\hookrightarrow \Gal(f)$. It is sufficient to show this for $m=1$. Let $A=\mathbb{Q}[t]$, $F=\Frac(A)$ and $f(t,x)\in \mathbb{Q}[t,x]$ which is irreducible. Furthermore, let $F'$ be $F$ adjoining a $x$-root of $f(t,x)$ and $A'$ the integral closure of $A$ in $F'$. If a specialisation $t=t_{0}\in \mathbb{Q}$ is such that $f(t_{0},x)$ is separable in $\mathbb{Q}[t]$ then the prime ideal $(t-t_{0})\subset A$ is unramified in $A'$. Let $E$ be the Galois closure of $F'/F$ and $B$ the integral closure of $A$ in $E$. Then as $(t-t_{0})$ is unramified in $A'$ it is unramified in $B$. Pick a prime $\mathfrak{B}\subset B$ above $(t-t_{0})$ then $(B/\mathfrak{B})/(A/(t- t_{0}))$ is a finite extension. It is clear $A/(t-t_{0}) =\mathbb{Q}$. Denote by $K$ the field $K:=B/\mathfrak{B}$. As $K/\mathbb{Q}$ is an extension of fields of characteristic 0 it is separable and as $E/F$ is Galois $K/\mathbb{Q}$ is normal, hence $K/\mathbb{Q}$ is also Galois. Furthermore, $K$ is the Galois closure of $f(t_{0},x)$ over $\mathbb{Q}$. As $\Gal(K/\mathbb{Q})$ is isomorphic to the  decomposition group $D_{\mathfrak{B}}$ there is an embedding $\Gal(K/\mathbb{Q})\hookrightarrow \Gal(E/F)$.
\end{remark}

\begin{proposition}\label{FunctionFieldGalois}Let $F(x_{0},x_{i})=x_{i}^3+ax_{i}x_{0}^2+bx_{0}^3\in \mathbb{Q}(a,b,n)[x_{0},x_{1}],$ and denote by $X_{a,b,n}$ the cubic surface $$
X_{a,b,n}:F(x_{0},x_{1})+F(x_{0},x_{2})+F(x_{0},x_{2})=nx_{0}^3
$$ over $\mathbb{Q}(a,b,n)$. Denote by $L$ the splitting field of $X_{a,b,n}$. Then $\Gal(L/\mathbb{Q}(a,b,n))\cong S_{3}\times S_{3}$.
\end{proposition}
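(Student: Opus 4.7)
The strategy is to use the identification established in the preceding propositions: if $L$ denotes the splitting field of $X_{a,b,n}$ over $k := \mathbb{Q}(a,b,n)$, then $L$ is the compositum of the splitting field $K_{1}$ of $f_{1}(x) = x^{3} + ax + (3b-n)$ and the splitting field $K_{2}$ of $f_{2}(x) = x^{3} - 12ax^{2} + 36a^{2}x + 27(3b-n)^{2} + 4a^{3}$. Since the restriction map $\Gal(K_{1}K_{2}/k) \hookrightarrow \Gal(K_{1}/k) \times \Gal(K_{2}/k)$ is always injective, the proof reduces to two independent claims: each $\Gal(K_{i}/k)$ is $S_{3}$, and $K_{1}$ and $K_{2}$ are linearly disjoint over $k$.

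To show $\Gal(K_{i}/k) \cong S_{3}$, I would compute both discriminants. A direct computation gives $\operatorname{disc}(f_{1}) = -(4a^{3} + 27(3b-n)^{2})$; for $f_{2}$, depressing via $x \mapsto y + 4a$ and simplifying yields the factorisation
\begin{align*}
    \operatorname{disc}(f_{2}) = -243\,(4a^{3} + 3(3b-n)^{2})(4a^{3} + 27(3b-n)^{2}).
\end{align*}
Writing $c = 3b-n$, each discriminant has an irreducible factor of odd multiplicity in $\mathbb{Q}[a,c]$, so neither is a square in $k$. For irreducibility, any factorisation of $f_{1}$ or $f_{2}$ over $k$ would descend to every specialisation of $(a,b,n)$; but $f_{1}$ specialises at $(a,c)=(0,-2)$ to the irreducible $x^{3}-2$, and $f_{2}$ specialises at $(a,c)=(1,0)$ to $x^{3}-12x^{2}+36x+4$, which has no rational roots. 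Thus both cubics are irreducible over $k$, and combined with the non-square discriminants each Galois group is $S_{3}$.

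For linear disjointness, any $S_{3}$-extension admits a unique proper non-trivial normal subextension, namely the quadratic field generated by the square root of its discriminant. Hence $K_{1} \cap K_{2}$ is either $k$, a common quadratic subfield, or (by dimension) $K_{1}=K_{2}$; the latter is easily ruled out by a specialisation in which the two cubics manifestly generate different extensions of $\mathbb{Q}$. A common quadratic subfield would force $\operatorname{disc}(f_{1}) \cdot \operatorname{disc}(f_{2})$ to be a square in $k$. Using the factorisations above,
\begin{align*}
    \operatorname{disc}(f_{1}) \cdot \operatorname{disc}(f_{2}) = 243\,(4a^{3} + 3c^{2})(4a^{3} + 27c^{2})^{2},
\end{align*}
which modulo $(k^{\times})^{2}$ equals $3(4a^{3} + 3c^{2})$. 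Since $4a^{3} + 3c^{2}$ is irreducible in $\mathbb{Q}[a,c]$ (viewed as a quadratic in $c$, its discriminant $-48a^{3}$ is not a square in $\mathbb{Q}(a)$), this is not a square in $k = \mathbb{Q}(a,c,n)$. Therefore $K_{1} \cap K_{2} = k$, and we conclude $\Gal(L/k) \cong S_{3} \times S_{3}$.

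The main technical obstacle is locating the factorisation $16a^{6} + 120a^{3}c^{2} + 81c^{4} = (4a^{3}+3c^{2})(4a^{3}+27c^{2})$ hidden inside $\operatorname{disc}(f_{2})$: this identity is what makes one of the two factors coincide with $\operatorname{disc}(f_{1})$, reducing the square-class comparison to the single manifestly irreducible polynomial $4a^{3}+3c^{2}$. Without this algebraic identity the discriminant of $f_{2}$ is an opaque degree-six form and the linear disjointness of $K_{1}$ and $K_{2}$ cannot be read off directly.
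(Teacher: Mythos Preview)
Your proof is correct, but the route differs from the paper's. The paper argues by a single well-chosen specialization: for $(a,b,n)=(19,8,5)$ one checks (via MAGMA) that $X_{n}$ is smooth and that its splitting field over $\mathbb{Q}$ already has Galois group $S_{3}\times S_{3}$; since the specialized Galois group embeds into $\Gal(L/\mathbb{Q}(a,b,n))$, this forces $[L:\mathbb{Q}(a,b,n)]\geq 36$, while Proposition~\ref{SplittingFieldProp} gives the upper bound $36$. Thus the paper never separates the contributions of $f_{1}$ and $f_{2}$, nor computes either discriminant.

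Your approach instead treats $K_{1}$ and $K_{2}$ individually, proving $\Gal(K_{i}/k)\cong S_{3}$ via the explicit discriminant formulae and then deducing linear disjointness from the square class of $\operatorname{disc}(f_{1})\cdot\operatorname{disc}(f_{2})$. This is more hands-on and avoids computer algebra entirely; the factorisation $\operatorname{disc}(f_{2})=-243(4a^{3}+3c^{2})(4a^{3}+27c^{2})$ you isolate is exactly what the paper records later as $\Delta_{2}(n)$ in Lemma~\ref{CubicDiscLemma2}, so your calculation is consistent with the surrounding material. One small redundancy: you do not need a separate specialization to exclude $K_{1}=K_{2}$, since equality would in particular force the quadratic subfields to coincide, i.e.\ $\operatorname{disc}(f_{1})\cdot\operatorname{disc}(f_{2})\in (k^{\times})^{2}$, which you have already ruled out.
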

\begin{proof}
   Note that if we specialise $a,b$ and $n$ to values in $\mathbb{Q}$ such that $X_{n}$ is smooth with splitting field $K$, then as abstract groups $\Gal(K/\mathbb{Q})$ is a subgroup of $\Gal(L/\mathbb{Q}(a,b,c,n))$. Choose $a=19, b = 8$ and $n=5$. Using Magma, we see that $X_{n}$ is smooth and has splitting field $K$ such that $\Gal(K/\mathbb{Q})\cong S_{3}\times S_{3}$, hence $[L:\mathbb{Q}(a,b,c,n)]\geq 36$. Furthermore, by Proposition \ref{SplittingFieldProp}, $[L:\mathbb{Q}(a,b,c,n)] \leq 36$, hence $[L:\mathbb{Q}(a,b,c,n)] = 36$. As $S_{3} \times S_{3} \leq \Gal(L/\mathbb{Q}(a,b,c,n))$ and the degree of the extension of $K$ is 36 we can deduce that $\Gal\left(L/\mathbb{Q}(a,b,c,n)\right)\cong S_{3}\times S_{3}$.
\end{proof}{}
\begin{remark}[Specialisations II]
There is an isomorphism $\Gal(L/\mathbb{Q}(a,b,n))\rightarrow N \subset W(E_{6})$ where $N$ is a subgroup of $W(E_{6})$. Moreover, $N$ has an action on the 27 exceptional vectors in $E_{6}\otimes \mathbb{Q}$ and this is the action of $\Gal(L/\mathbb{Q}(a,b,n))$ on the 27 lines of $X_{a,b,n}$. When we specialise to some values in $a,b,n$ such that $f_{1}$ and $g_{1}$ are separable and $X_{n}$ is a smooth cubic surface we have a homomorphism, $\Gal(K/\mathbb{Q})\hookrightarrow\Gal(L/\mathbb{Q}(a,b,n)) \rightarrow N$ where the image of $\Gal(K/\mathbb{Q})$ inside $N$ is some subgroup $G$ of $N$ and the Galois action on the 27 lines of $X_{n}$ is the action of $G$ on the 27 exceptional vectors of $E_{6}\otimes \mathbb{Q}$. Fix a choice of $a,b,n\in \mathbb{Q}$ such that $X_{n}$ is smooth and the compositum of the splitting fields of $f_{1}$ and $g_{1}$ generate a $S_{3}\times S_{3}$ extension of $\mathbb{Q}$. Then the sets $\{L_{1,i}:i=1,2,3\}$,$\{L_{2,i}:i=1,2,3\}$ and $\{L_{3,i}:i=1,2,3\}$ form Galois orbits of size three. Enumerating the subgroups of $W(E_{6})$ which have three orbits of size three and are isomorphic to $S_{3}\times S_{3}$ leads to two possibilities for the orbit types, namely $[3,3,3,18]$ and $[3,3,3,3,3,3,9]$. Moreover, using Magma we can show that for the case $a=19, b = 8$ and $n=5$ the Galois orbit type is $[3,3,3,18]$ hence, the Galois orbit type of $X_{a,b,n}$ is $[3,3,3,18]$. Now we know every possible Galois action on any smooth specialisation of $X_{a,b,n}$ such that $f_{1}$ and $g_{1}$ are separable. This is described fully in Table \ref{PossGalType}. We also use Magma to compute $\HH^{1}(\mathbb{Q},\Pic \bar{X})$ for any smooth cubic surface $X$ over $\mathbb{Q}$ with the Galois actions listed in Table \ref{PossGalType}.
\end{remark}
\begin{proposition}\label{TrivialBrauerGroup1} Choose $n\in \mathbb{Q}$ such that $X_{n}$ is smooth over $\mathbb{Q}$. Denote by $K$ the splitting field of $X_{n}$. If $\Gal(K/\mathbb{Q})\cong C_{2}\times S_{3}$ and $f_{1}$ is reducible or $\Gal(K/\mathbb{Q})\cong S_{3}\times S_{3}$ then $\Br X_{n}=\Br \mathbb{Q}$.
\end{proposition}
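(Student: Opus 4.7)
The plan is to reduce the statement to a vanishing of first Galois cohomology. Since $X_n$ is a smooth cubic surface over $\QQ$, it is geometrically rational, hence $\Br \bar{X}_n = 0$ and therefore $\Br X_n = \Br_1 X_n$; the transcendental part vanishes for free. Combining this with the Hochschild--Serre sequence (\ref{CubicSurfaceHSS}), which under everywhere local solubility identifies $\Br_1 X_n / \Br_0 X_n$ with $\HH^1(\QQ, \Pic \bar{X}_n)$, it suffices to show $\HH^1(\QQ, \Pic \bar{X}_n) = 0$ in each of the two listed cases. Local solubility itself should not be a real obstruction: in the $C_2 \times S_3$ case with $f_1$ reducible, the rational root of $f_1$ produces a rational line on $X_n$; in the $S_3 \times S_3$ case the proposition is ultimately only applied in Theorem \ref{BIGTHM} under the assumption $\mathcal{U}_n(\mathbb{A}_{\ZZ}) \neq \emptyset$, which forces local solubility.

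To pin down the Galois module structure I would use the discussion immediately preceding the proposition together with Proposition \ref{FunctionFieldGalois}. By specialisation, the image $G$ of $\Gal(K/\QQ)$ in $W(E_6)$ is a subgroup of the generic image $N \cong S_3 \times S_3$, and its conjugacy class is determined by how its two projections permute the roots of $f_1$ and of $f_2$. In the $S_3 \times S_3$ case one takes $G = N$, acting with the orbit type $[3,3,3,18]$ singled out in the remark. In the $C_2 \times S_3$ case with $f_1$ reducible, $G$ is the proper subgroup that fixes one root of $f_1$ and swaps the other two while acting as the full $S_3$ on the roots of $f_2$; this refines the previous orbit decomposition on the nine lines $L_{i,j}$ and on the remaining eighteen lines in a way one can read off directly from the labelling.

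Once these actions are specified, the cohomology group $\HH^1(\QQ, \Pic \bar{X}_n)$ can be computed mechanically using MAGMA, exactly as signalled in Remark \ref{GaloisActionRem}: take the relevant subgroup of $W(E_6)$ with its induced action on the 27 weights of $E_6 \otimes \QQ$, form the resulting $\ZZ[G]$-module $\Pic \bar{X}_n$, and ask MAGMA for its first cohomology. I expect both computations to return $0$, which finishes the proof.

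The main obstacle I foresee is combinatorial: pinning down, up to $W(E_6)$-conjugacy, the correct subgroup in the $C_2 \times S_3$ case, since a priori there may be several abstractly isomorphic but non-conjugate copies of $C_2 \times S_3$ inside $S_3 \times S_3 \subset W(E_6)$, and the hypothesis ``$f_1$ reducible'' is what selects the right one. Once this subgroup and its action on the 27 lines is unambiguously identified, the remainder of the argument is mechanical.
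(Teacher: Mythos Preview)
Your overall approach coincides with the paper's: reduce to $\HH^1(\QQ,\Pic\bar{X}_n)$ via Hochschild--Serre and then read off the answer from the table of Galois actions (Table~\ref{PossGalType}). The one place you diverge is the local solubility step, and there you have made life harder than necessary and left a gap. The paper simply observes that $[0:1:-1:0]\in X_n(\QQ)$ for every $n$ (since $F(0,1)+F(0,-1)+F(0,0)=1-1+0=0$), which gives a rational point---hence everywhere local solubility and $\Br_0 X_n=\Br\QQ$---uniformly in both cases. Your handling of the $S_3\times S_3$ case, deferring to the hypothesis $\mathcal{U}_n(\mathbb{A}_\ZZ)\neq\emptyset$ in the eventual application, does not prove the proposition as stated; it only proves a conditional version.

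The combinatorial obstacle you flag at the end is genuine but is exactly what Table~\ref{PossGalType} resolves. When $f_1$ is reducible with rational root $r_1$, the three lines $L_{1,1},L_{2,1},L_{3,1}$ are each individually defined over $\QQ$, so the orbit decomposition on the nine lines $L_{i,j}$ is $[1^3,2^3]$; this forces the $C_2\times S_3$ entry with orbit type $[1^3,2^3,6^3]$ and $\HH^1=0$, rather than the other $C_2\times S_3$ entry $[3^3,6,12]$ with $\HH^1=\ZZ/2\ZZ$.
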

\begin{proof}
The cubic surface $X_{n}$ always has a rational point, namely $[x_{0}:x_{1}:x_{2}:x_{3}]=[0:1:-1:0],$ hence it is everywhere locally soluble. By the Hochschild-Serre spectral sequence we can conclude that $\Br X/\Br \mathbb{Q} \cong \HH^{1}(\mathbb{Q},\Pic \bar{X}_{n}),$ and then using the results from Table \ref{PossGalType} the proposition follows.
\end{proof}
\begin{notation}
Denote by $\Delta_{i}(n)$ the discriminant of $f_{i}$, for a fixed choice of $n\in \mathbb{Z}$. As $a,b,n\in \mathbb{Z}$, this implies that $\Delta_{i}(n)\in \mathbb{Z}$ for $i=1,2$. Furthermore, denote by $\Delta_{3}(n)$ the quotient $\Delta_{2}(n)/\Delta_{1}(n)$ if $4a^3+27(3b-n)^2\neq 0$ or $\Delta_{1}(n)/\Delta_{2}(n)=0$ otherwise.
\end{notation}
\begin{lemma}\label{BothPolyFullExt}
Let $$A_{1}:=\{n\in \mathbb{Z}:\Delta_{1}(n),\Delta_{2}(n)\ \text{or}\ \Delta_{3}(n)\ \text{is a square or either of}\ f_{1}\ \text{or}\ f_{2}\ \text{is reducible}\}$$ then for all $n\notin A_{1}$. Then the compositum of the splitting field of $f_{1}$ and $f_{2}$ is a $S_{3}\times S_{3}$-extension of $\mathbb{Q}$.
\end{lemma}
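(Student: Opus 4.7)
The plan is to reduce the claim to a standard Galois-theoretic calculation of $K_{1} \cap K_{2}$, where $K_{i}$ denotes the splitting field of $f_{i}$ over $\QQ$, and then to use the three discriminant conditions defining $A_{1}$ to eliminate each way in which this intersection could be nontrivial.

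First I would observe that for $n \notin A_{1}$ both $f_{1}$ and $f_{2}$ are irreducible cubics over $\QQ$ with nonsquare discriminants, so the classical description of cubic Galois groups gives $\Gal(K_{i}/\QQ) \cong S_{3}$ for $i = 1, 2$. The compositum $K_{1}K_{2}$ is Galois over $\QQ$ and the restriction map embeds $\Gal(K_{1}K_{2}/\QQ)$ into $S_{3} \times S_{3}$ as the fibre product over $\Gal((K_{1}\cap K_{2})/\QQ)$, so it suffices to prove $K_{1} \cap K_{2} = \QQ$.

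Since $K_{1} \cap K_{2}$ is Galois over $\QQ$ and is a subfield of $K_{1}$, it corresponds to a normal subgroup of $\Gal(K_{1}/\QQ) \cong S_{3}$. The normal subgroups of $S_{3}$ are $\{e\}, A_{3}, S_{3}$, so
\begin{equation*}
K_{1} \cap K_{2} \in \{K_{1},\ \QQ(\sqrt{\Delta_{1}(n)}),\ \QQ\},
\end{equation*}
and by the same reasoning for $K_{2}$ the only possible nontrivial values are $K_{1} \cap K_{2} = K_{1} = K_{2}$ (which happens iff the two degree-six Galois extensions coincide) or $K_{1} \cap K_{2} = \QQ(\sqrt{\Delta_{1}(n)}) = \QQ(\sqrt{\Delta_{2}(n)})$. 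Both alternatives force the equality of unique quadratic subfields $\QQ(\sqrt{\Delta_{1}(n)}) = \QQ(\sqrt{\Delta_{2}(n)})$, which is equivalent to $\Delta_{2}(n)/\Delta_{1}(n) = \Delta_{3}(n)$ being a rational square. This is excluded by $n \notin A_{1}$, so $K_{1} \cap K_{2} = \QQ$ and $K_{1}K_{2}/\QQ$ has Galois group $S_{3} \times S_{3}$.

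I do not expect a substantive obstacle: the argument is routine once the three conditions defining $A_{1}$ are correctly aligned with the three failure modes (reducibility of $f_{1}$ or $f_{2}$, $A_{3}$-Galois type on either factor, and coincidence of the two quadratic subfields). The only classical ingredient beyond the normal subgroup list of $S_{3}$ is the standard fact that $\QQ(\sqrt{d_{1}}) = \QQ(\sqrt{d_{2}})$ if and only if $d_{1}d_{2}$ is a square in $\QQ^{\times}$.
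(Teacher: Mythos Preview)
Your proof is correct and takes a cleaner route than the paper's. The paper argues by degree-counting: it invokes Proposition~\ref{FunctionFieldGalois} (established via a MAGMA computation on a single specialisation) to bound $\Gal(K/\QQ)$ inside $S_{3}\times S_{3}$, then exhibits subfields $\QQ(r_{1},r_{2})$ of degree~$9$ and $\QQ(\sqrt{\Delta_{1}(n)},\sqrt{\Delta_{2}(n)})$ of degree~$4$ to force $[K:\QQ]=36$. You instead identify $\Gal(K_{1}K_{2}/\QQ)$ with the fibre product over $\Gal((K_{1}\cap K_{2})/\QQ)$ and analyse the intersection directly via the normal-subgroup lattice of $S_{3}$. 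This is more elementary---it needs no input from the generic fibre and no computer check---and it makes the role of the $\Delta_{3}$ condition transparent as exactly what rules out the only possible nontrivial intersection. The paper's route has the minor advantage of recycling Proposition~\ref{FunctionFieldGalois}, which is used elsewhere anyway, but as a standalone argument yours is preferable.
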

\begin{proof}
 By Proposition \ref{FunctionFieldGalois} the Galois group of the splitting field $K$ of $f_{1}$ and $f_{2}$ over $\mathbb{Q}$ is a subgroup of $S_{3}\times S_{3}$ so it is sufficient to show that $K$ has degree 36 for $n \notin A_{1}$. Choose $n\in \mathbb{Z}$ such that $n\not\in A_{1}$ then $\mathbb{Q}(r_{i})=\mathbb{Q}[x]/(f_{i})$ is a degree 3 extension of $\mathbb{Q}$. Moreover, as $\Delta_{3}(n)$ is not a square $\left[\mathbb{Q}(r_{1},r_{2}):\mathbb{Q}\right]=9$. As $\Delta_{i}(n)$ is not a square $\left[\mathbb{Q}\left(\sqrt{\Delta_{i}(n)}\right):\mathbb{Q}\right]=2$ for $i \in \{1,2\}$. If $\sqrt{\Delta_{i}(n)} \in \mathbb{Q}(\sqrt{\Delta_{j}(n)})$ for $i,j \in \{1,2\}$ and $i\neq j$ then 
\begin{align*}
    x_{n}+y_{n}\sqrt{\Delta_{j}(n)}&=\sqrt{\Delta_{i}(n)} \ \text{for} \ x_{n},y_{n} \in \mathbb{Q},\\
    x_{n}^2+y_{n}^2\Delta_{j}(n)+2x_{n}y_{n}\sqrt{\Delta_{j}(n)}&=\Delta_{i}(n).
\end{align*}By comparing coefficients $2x_{n}y_{n}=0$, either $x_{n}=0$ or $y_{n}=0$. If $x_{n}=0$, $\Delta_{3}(n)$ is a square in $\mathbb{Q}$ and if $y_{n}=0$ then, $x_{n}^2=\Delta_{i}$ both of which are contradictions to the fact $n\notin A_{1}$. Hence,$$\left[\mathbb{Q}\left(\sqrt{\Delta_{1}(n)},\sqrt{\Delta_{2}}(n)\right):\mathbb{Q}\right] =4$$ then by applying the tower law we can conclude that$\left[\mathbb{Q}\left(r_{1},r_{2},\sqrt{\Delta_{1}(n)},\sqrt{\Delta_{2}(n)}\right):\mathbb{Q}\right]=36$.
\end{proof}
\begin{lemma}\label{Section5MainLemma}
Let $$A_{2}:=\{n\in \mathbb{Z}:\Delta_{1}(n),\Delta_{2}(n)\ \text{or}\ \Delta_{3}(n)\ \text{is a square or}\ f_{2}\ \text{is reducible}\}$$ then for all $n\notin A_{2}$ then the compositum of the splitting field of $f_{1}$ and $f_{2}$, denoted by $K$, is a $S_{3}\times S_{3}$ or $C_{2}\times S_{3}$ extension of $\mathbb{Q}$.
\end{lemma}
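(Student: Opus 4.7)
The plan is to split into cases according to whether $f_1$ is reducible over $\mathbb{Q}$. The irreducible case follows immediately from the preceding lemma, while the reducible case reduces to a short calculation with quadratic subfields of the splitting field of $f_2$.

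First I would observe that $A_1 = A_2 \cup \{n\in\mathbb{Z} : f_1 \text{ is reducible over } \mathbb{Q}\}$, so any $n\notin A_2$ for which $f_1$ is irreducible automatically lies outside $A_1$. Applying Lemma \ref{BothPolyFullExt} then yields $\Gal(K/\mathbb{Q})\cong S_3\times S_3$, giving the first alternative of the lemma.

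Next, assume $n\notin A_2$ and that $f_1$ is reducible. The hypothesis that $\Delta_1(n)$ is not a square (in particular nonzero) means $f_1$ has three distinct roots and cannot split completely over $\mathbb{Q}$, so $f_1$ factors as a linear times an irreducible quadratic. A short discriminant computation (write $f_1=(x-r)(x^2+rx+(a+r^2))$ and use $\Delta_1(n)=(3r^2+a)^2\cdot(-3r^2-4a)$) shows that the splitting field $L_1$ of $f_1$ is precisely the quadratic extension $\mathbb{Q}(\sqrt{\Delta_1(n)})$. Since $f_2$ is irreducible with non-square discriminant, its splitting field $L_2$ is an $S_3$-extension of $\mathbb{Q}$ whose unique quadratic subfield is $\mathbb{Q}(\sqrt{\Delta_2(n)})$.

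The compositum $K=L_1L_2$ is Galois over $\mathbb{Q}$ with Galois group either $S_3$ (if $L_1\subseteq L_2$) or $C_2\times S_3$ (otherwise). Uniqueness of the quadratic subfield of $L_2$ makes $L_1\subseteq L_2$ equivalent to $\mathbb{Q}(\sqrt{\Delta_1(n)})=\mathbb{Q}(\sqrt{\Delta_2(n)})$, which in turn is equivalent to $\Delta_1(n)\Delta_2(n)=\Delta_1(n)^2\,\Delta_3(n)$ being a square in $\mathbb{Q}^\times$, i.e.\ to $\Delta_3(n)$ being a square. This is ruled out by $n\notin A_2$, so $\Gal(K/\mathbb{Q})\cong C_2\times S_3$, as required.

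I do not anticipate any substantial obstacle; the one delicate point is the corner case in the definition of $\Delta_3(n)$ when $4a^3+27(3b-n)^2=0$, but the non-square hypotheses on $\Delta_1(n)$ and $\Delta_2(n)$ force both to be nonzero, so $\Delta_3(n)$ is well-defined and the equivalence above is valid throughout our case analysis.
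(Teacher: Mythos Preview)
Your proof is correct and follows the same case split as the paper: invoke Lemma~\ref{BothPolyFullExt} when $f_{1}$ is irreducible, and handle the reducible case separately. The paper's own proof of the reducible case is only the one-line remark ``by a similar argument to Lemma~\ref{BothPolyFullExt}'', so you have in fact written out what that similar argument amounts to; your use of the uniqueness of the quadratic subfield of an $S_{3}$-extension is a clean way to phrase the comparison of $\mathbb{Q}(\sqrt{\Delta_{1}(n)})$ and $\mathbb{Q}(\sqrt{\Delta_{2}(n)})$ that the paper leaves implicit.
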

\begin{proof}
Fix $n\notin A_{2}$. If $f_{1}$ is irreducible then by Lemma \ref{BothPolyFullExt} $K$ is a $S_{3}\times S_{3}$ extension of $\mathbb{Q}$. If $f_{1}$ is reducible, by a similar argument to Lemma \ref{BothPolyFullExt} $K$ is a $C_{2}\times S_{3}$ extension of $\mathbb{Q}$.
\end{proof}
\begin{proposition}\label{TrivialBrauerGroup}
Fix $n\in \mathbb{Z}$ such that $X_{n}$ is smooth and $n\notin A_{2}$, then $\Br X_{n} = \Br \mathbb{Q}$.
\end{proposition}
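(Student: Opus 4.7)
The plan is to combine Lemma \ref{Section5MainLemma} with Proposition \ref{TrivialBrauerGroup1}. Fix $n\in \mathbb{Z}$ such that $X_{n}$ is smooth and $n\notin A_{2}$. By Proposition \ref{SplittingFieldProp} together with Proposition \ref{ProofOfkAlg2}, the splitting field $K$ of $X_n$ is the compositum of the splitting fields of $f_1$ and $f_2$. Applying Lemma \ref{Section5MainLemma}, $\Gal(K/\mathbb{Q})$ is isomorphic to either $S_3\times S_3$ or $C_2\times S_3$. In the first case Proposition \ref{TrivialBrauerGroup1} applies immediately and yields $\Br X_n = \Br \mathbb{Q}$, so I would dispatch this case in one line.

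The only remaining case is $\Gal(K/\mathbb{Q})\cong C_2\times S_3$. To conclude via Proposition \ref{TrivialBrauerGroup1} here, I need the extra input that $f_1$ is reducible. The key observation is that, under our hypothesis $n \notin A_2$, the dichotomy in Lemma \ref{Section5MainLemma} tracks the reducibility of $f_1$ exactly: if $f_1$ were irreducible, then because $n \notin A_2$ ensures $f_2$ is irreducible and that none of $\Delta_1(n),\Delta_2(n),\Delta_3(n)$ is a square, Lemma \ref{BothPolyFullExt} would force $K$ to be an $S_3\times S_3$-extension of $\mathbb{Q}$, contradicting the assumption that $\Gal(K/\mathbb{Q})\cong C_2\times S_3$. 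Hence $f_1$ must be reducible, and Proposition \ref{TrivialBrauerGroup1} applies in this case as well.

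The hard part is purely bookkeeping: one must check that the two cases offered by Lemma \ref{Section5MainLemma} line up precisely with the two hypotheses under which Proposition \ref{TrivialBrauerGroup1} yields a trivial Brauer group, so that no uncovered configuration (in particular, $C_2 \times S_3$ with $f_1$ irreducible) can arise for $n \notin A_2$. Once this matching is verified, no new computation is required: both intermediate results do the genuine work, and the proof is simply a case split followed by an appeal to Proposition \ref{TrivialBrauerGroup1}.
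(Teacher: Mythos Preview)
Your proposal is correct and follows essentially the same approach as the paper, which simply writes ``This follows from Proposition \ref{TrivialBrauerGroup1}'' and leaves implicit precisely the bookkeeping you have spelled out. The key observation you make---that the dichotomy in Lemma \ref{Section5MainLemma} tracks the reducibility of $f_1$, so that the $C_2\times S_3$ case only arises when $f_1$ is reducible---is exactly what is established in the proof of Lemma \ref{Section5MainLemma} itself, and this is what the paper is tacitly invoking.
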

\begin{proof}
Note that if $n\not\in A_{2}$ then $4a^3+27(3b-n)^2\neq 0$, as otherwise $f_{2}$ would be reducible. Then by Proposition \ref{prop: changing g1 to f2} the compositum  of the splitting fields of $f_{1}$ and $g_{1}$ is isomorphic to the splitting field of $f_{1}$ and $f_{2}$. Then the statement follows from Proposition \ref{TrivialBrauerGroup1}.
\end{proof}
\begin{notation}
For the rest of the paper denote by $A_{3}:=A_{2}\cup\{n\in \mathbb{Z}:X_{n}\ \text{is not smooth}\}$.
\end{notation}
\section{Finiteness results for thin sets}
In this section we introduce Hilbert's irreducibility theorem in modern terminology, using thin sets; a notion introduced by Serre \cite[Ch. 3 \S 1]{S16}. We show if  we fix $a,b\in \mathbb{Z}$ such that $a$ is non-zero then there exists a thin set of $n\in \mathbb{Z}$ such that $X_{n}$ is a smooth cubic surface over $\mathbb{Q}$ with $\Br X_{n}=\Br \mathbb{Q}$. In Section \ref{RedCount} we show that this thin set is finite. Throughout Section \ref{ThinSets}, $k$ will be a field of characteristic 0 and an algebraic variety over $k$ will be integral and quasi-projective. Denote by $V$ an algebraic variety over $k$.
\subsection{Thin sets}\label{ThinSets}
\begin{definition}
A subset $A\subset V(k)$ is of \emph{type I} if there is a proper closed subscheme $W\subset V$, with $A\subset W(k)$. $A$ is said to be of \emph{type II} if there is an algebraic variety $V'$ with $\dim V=\dim V',$ and a generically surjective morphism $\pi:V'\rightarrow V$ of degree greater than or equal to 2, with $A\subset \pi(V'(k))$.
\end{definition}
\begin{definition}
A subset $A \subset V(k)$ is called \emph{thin} if it is contained in a finite union of sets of type I or II.
\end{definition}
Let $V$ be as above, $k(V)$ be its function field and $$f(x)=x^{n}+a_{n-1}x^{n-1}+\dotsc a_{0}\in k(V)[x]$$ be an irreducible polynomial over $k(V)$. Let $G\subset S_{n}$ be the Galois group of $f$, viewed as a group of permutations on the roots of $f$. If $t\in V(k)$ and $t$ is not a pole of any of the $a_{i}$ then $a_{i}(t)\in k$, and one can define the specialization of $f$ at $t$ as $$
f_{t}(x)=x^{n}+a_{n-1}(t)x^{n-1}+\dotsc a_{0}(t)\in k[x].
$$
{}\begin{theorem}[Hilbert's irreducibility theorem, {\cite[Prop 3.3.5]{S16}}] \label{HilIrr}
There exists a thin set $A \subset V(k)$ such that, if $t\not\in A,$ then \begin{enumerate}
    \item $t$ is not a pole of any of the $a_{i}$,
    \item $f_{t}(x)$ is irreducible over $k$,
    \item the Galois group of $f_{t}$ is $G$.
\end{enumerate}
\end{theorem}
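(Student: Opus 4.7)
The plan is to show that the set $A$ of ``bad'' $t$ is a finite union of type I and type II thin sets, one piece for each of the three conditions.

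Condition (1) is straightforward: the poles of the coefficients $a_i \in k(V)$ are contained in a proper closed subscheme $Z_0 \subsetneq V$, so $Z_0(k)$ is a type I thin set. For conditions (2) and (3) I would pass to the Galois closure of $f$. Let $L$ be the splitting field of $f$ over $k(V)$, so $\Gal(L/k(V)) = G$, and let $\pi\colon W \to V$ be the normalisation of $V$ in $L$; this is a generically Galois cover of degree $|G|$, étale outside some proper closed subset $Z_1 \subset V$. For each conjugacy class of maximal proper subgroups $H < G$ form the quotient $W_H := W/H$ and the induced map $\pi_H \colon W_H \to V$. Since $H$ is proper in the finite group $G$, $\pi_H$ is generically surjective of degree $[G:H] \geq 2$, so $\pi_H(W_H(k))$ is a type II thin set. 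As $G$ has only finitely many conjugacy classes of maximal subgroups, the union
\[ A \;:=\; Z_0(k) \,\cup\, Z_1(k) \,\cup\, \bigcup_H \pi_H(W_H(k)) \]
is thin.

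The crux is to verify that for $t \in V(k) \setminus A$ the Galois group $G_t$ of $f_t$ equals $G$ (whence in particular $f_t$ is irreducible, since $G$ already acts transitively on the roots of $f$). Pick a preimage $w \in W(\overline{k})$ of $t$; since $t$ lies in the étale locus of $\pi$, the decomposition subgroup at $w$ is canonically identified, via specialisation of roots, with $G_t$ viewed as a subgroup of $G$. If $G_t \subsetneq G$ were proper, it would lie in some maximal proper subgroup $H$, and then the image of $w$ in $W_H$ would be $\Gal(\overline{k}/k)$-fixed and therefore yield a $k$-point of $W_H$ mapping to $t$, contradicting $t \notin \pi_H(W_H(k))$.

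The main obstacle I would expect is the final step: pinning down the identification between $G_t$ and the decomposition subgroup at $w$, and controlling the finitely many places of $V$ where specialisation could misbehave (ramification, drop in degree of $f_t$, non-separability of the reduction). Once this dictionary between subgroups of $G$ and intermediate covers of $W \to V$ is set up cleanly — essentially the geometric specialisation theorem for fundamental groups in the form developed in Serre's book — the rest of the argument is bookkeeping over the finitely many conjugacy classes of maximal subgroups of $G$.
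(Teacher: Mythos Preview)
The paper does not give its own proof of this theorem: it is quoted verbatim as \cite[Prop~3.3.5]{S16} and used as a black box. So there is no ``paper's proof'' to compare against; your proposal is essentially the standard argument one finds in Serre's book, and it is correct in outline.

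A couple of minor remarks on your write-up. First, for condition~(2) you note that irreducibility of $f_t$ follows from $G_t = G$ because $G$ is transitive on the roots; that is fine, but it is worth saying explicitly that $G$ is transitive precisely because $f$ was assumed irreducible over $k(V)$. Second, in the last paragraph you correctly flag the place where care is needed: that the decomposition subgroup at an unramified closed point of $W$ over $t$ is, up to conjugacy in $G$, the Galois group of the residue field extension, which in turn is identified with $G_t$ once one tracks the specialisation of the roots of $f$. This is exactly the content of the specialisation theorem Serre proves just before the cited proposition, so your instinct about where the real work lies is accurate. With those two points spelled out, your argument is complete and matches the reference.
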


\begin{proposition}\label{ThinSetProp}
Fix $a,b\in \mathbb{Z}$ with $a$ non-zero. Then there exists a thin set $A\subset \mathbb{A}^{1}_{\mathbb{Q}}(\mathbb{Q})$, such that for $n\not\in A$, the cubic surface $X_{n}$ over $\mathbb{Q}$ is smooth and has the property $\Br X_{n} = \Br \mathbb{Q}$.
\end{proposition}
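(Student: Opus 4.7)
The plan is to combine Proposition \ref{TrivialBrauerGroup} with Hilbert's irreducibility theorem and a type-II double-cover construction. By Proposition \ref{TrivialBrauerGroup} we have $\Br X_{n}=\Br\mathbb{Q}$ whenever $X_{n}$ is smooth and $n\notin A_{2}$, so it is enough to exhibit a thin set containing $A_{3}=A_{2}\cup\{n:X_{n}\text{ not smooth}\}$. Thin sets are closed under finite unions, and any finite set is thin of type I, so Proposition \ref{SmoothFinite} handles the non-smoothness locus, leaving the four conditions defining $A_{2}$: reducibility of $f_{2}$ and the squareness of $\Delta_{1}(n)$, $\Delta_{2}(n)$, $\Delta_{3}(n)$.

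For the reducibility of $f_{2}$, I would apply Theorem \ref{HilIrr} to $f_{2}\in\mathbb{Q}(n)[x]$. The only verification required is that $f_{2}$ is irreducible over $\mathbb{Q}(n)$; since $f_{2}$ is monic in $x$, a putative root in $\mathbb{Q}(n)$ would lie in $\mathbb{Q}[n]$, and a degree-in-$n$ comparison rules this out (the constant term $27(3b-n)^{2}+4a^{3}$ has degree $2$ in $n$, while the cube of a nonconstant polynomial in $\mathbb{Q}[n]$ has degree divisible by $3$).

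For each of the three square conditions, the set $\{n:\Delta_{i}(n)\in(\mathbb{Q}^{\times})^{2}\}$ is contained in the image of the $\mathbb{Q}$-points of the degree-$2$ cover $C_{i}:y^{2}=\Delta_{i}(n)\to\mathbb{A}^{1}_{\mathbb{Q}}$, which is a thin set of type II as soon as $C_{i}$ is irreducible, i.e.\ as soon as $\Delta_{i}(n)$ is not a square in $\mathbb{Q}(n)$. For $\Delta_{1}(n)=-4a^{3}-27(3b-n)^{2}$ and for $\Delta_{2}(n)$, this will follow from a leading-coefficient computation in $n$ (the leading terms work out to $-27n^{2}$ and $-3^{9}n^{4}$ respectively, and neither coefficient is a rational square). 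The delicate case is $\Delta_{3}$: after an explicit polynomial division, $\Delta_{3}(n)=729(3b-n)^{2}+972a^{3}$ is a quadratic in $n$ whose leading coefficient $729=27^{2}$ \emph{is} a square, so the leading-coefficient shortcut fails. Instead I would write $\Delta_{3}=(An+B)^{2}$ and match coefficients of $n^{2},n,1$; consistency at the constant term will force $972a^{3}=0$, which is ruled out by the hypothesis $a\neq 0$. I expect this coefficient-matching step, together with the explicit division producing $\Delta_{3}$, to be the main obstacle, and it is the only point where the hypothesis $a\neq 0$ is used.

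Putting the pieces together, $A_{3}$ is contained in a finite union of a finite set, a Hilbert-irreducibility thin set, and three type-II thin sets, hence is itself thin; outside this thin set $X_{n}$ is smooth with $\Br X_{n}=\Br\mathbb{Q}$ by Proposition \ref{TrivialBrauerGroup}.
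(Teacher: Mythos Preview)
Your argument is correct. The paper's own proof takes a shorter route: rather than decomposing $A_{2}$ into its four defining conditions, it applies Theorem \ref{HilIrr} in a single stroke to conclude that, outside a thin set $C$, the compositum of the splitting fields of $f_{1}$ and $f_{2}$ is an $S_{3}\times S_{3}$-extension of $\mathbb{Q}$, and then invokes Proposition \ref{TrivialBrauerGroup1} directly. This presupposes that the generic Galois group over $\mathbb{Q}(n)$ (for the fixed $a,b$) is already $S_{3}\times S_{3}$, which is implicit in the paper via Proposition \ref{FunctionFieldGalois} and specialization.

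Your approach instead unpacks $A_{2}$ and handles each condition separately: one Hilbert-irreducibility step for the irreducibility of $f_{2}$, and three explicit type-II covers $y^{2}=\Delta_{i}(n)$ for the discriminant conditions. This is longer but more transparent: it makes the role of the hypothesis $a\neq 0$ completely explicit (it enters only in showing $\Delta_{3}(n)=729(3b-n)^{2}+972a^{3}$ is not a square in $\mathbb{Q}[n]$), and it does not rely on first knowing that the generic Galois group over $\mathbb{Q}(n)$ is the full $S_{3}\times S_{3}$. In effect, your computations with $\Delta_{1},\Delta_{2},\Delta_{3}$ are precisely what one would need to verify that generic Galois group, so the two proofs have essentially the same content, organised differently.
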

\begin{proof}
By Proposition \ref{SmoothFinite} the set $B:=\{n\in \mathbb{Q}:X_{n}\ \text{is not smooth}\}$ is finite. Using Theorem \ref{HilIrr} there is a thin set $C$ of $n\in \mathbb{Q}$ such that the compositum of the splitting field of $f_{1}$ and $f_{2}$ generate an $S_{3}\times S_{3}$ extension of $\mathbb{Q}$. Then the proposition follows from using Proposition \ref{TrivialBrauerGroup} and letting $A:=(B\cup C)\cap \{n:n\in \mathbb{Z}\}$.
\end{proof}
\begin{remark}
The assumption that $a$ is non-zero in Proposition \ref{ThinSetProp} is a necessary condition as if $a=0$ then by \cite[Prop 2.1]{CTW12} $\Br X_{n}/\Br \mathbb{Q}\cong \mathbb{Z}/3\mathbb{Z}$ for all $n\in \mathbb{Q}$ such that $f_{1}$ is irreducible.
\end{remark}
\subsection{Finiteness results}\label{FinitenessResults}
\label{RedCount}
 Section \ref{RedCount} is dedicated to showing that if $a,b\in \mathbb{Z}$ are fixed and $a$ is non-zero. For all but finitely many choices of $n\in \mathbb{Z}$, $\Br X_{n}=\Br \mathbb{Q}$. By Proposition \ref{TrivialBrauerGroup} it will be sufficient to show that for all but finitely many $n\in \mathbb{Z}$ the polynomials $f_{1}$ and $f_{2}$ generate either a $S_{3}\times S_{3}$-extension or a $C_{2}\times S_{3}$-extension such that $f_{1}$ is reducible.

\begin{lemma}\label{CubicDiscLemma1} $\Delta_{1}(n)$ is a square only for finitely many $n\in \mathbb{Z}$.
\end{lemma}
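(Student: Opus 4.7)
The plan is to compute the discriminant explicitly and recognize that the condition ``$\Delta_1(n)$ is a square'' cuts out an integer solution set on either an ellipse or on the empty set, hence is finite.

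For the depressed cubic $f_1(x) = x^3 + ax + (3b-n)$ the standard formula gives
\begin{equation*}
\Delta_1(n) = -4a^3 - 27(3b-n)^2.
\end{equation*}
So I want to show that the equation $y^2 = -4a^3 - 27(3b-n)^2$ has only finitely many solutions $(n,y) \in \mathbb{Z}^2$. After the substitution $m = 3b - n$ (a bijection on $\mathbb{Z}$), this reads
\begin{equation*}
y^2 + 27 m^2 = -4a^3.
\end{equation*}

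Now split on the sign of $a$. If $a > 0$ then the right-hand side is strictly negative while the left-hand side is non-negative, so there are no integer solutions at all; in particular only finitely many $n \in \mathbb{Z}$ yield a square value. If $a < 0$ then $-4a^3 > 0$ and the equation defines an ellipse in the real $(m,y)$-plane, on which both $|m|$ and $|y|$ are bounded by $\sqrt{-4a^3/27}$ and $\sqrt{-4a^3}$ respectively; this compact set contains only finitely many lattice points, hence only finitely many values of $n = 3b - m$ can make $\Delta_1(n)$ a square. (The case $a = 0$ is excluded by hypothesis, and in any event would give the unique solution $m = y = 0$.)

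There is no real obstacle here: once the discriminant is written in the form $y^2 + 27m^2 = -4a^3$ with $a, b$ fixed, finiteness is immediate from either the positivity obstruction or the compactness of an ellipse. The only thing to be careful about is to remember that $\Delta_1(n)$ being ``a square'' in $\mathbb{Z}$ includes the value $0$, which at worst contributes two extra values of $n$ when $a < 0$ and none when $a > 0$.
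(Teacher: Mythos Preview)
Your proof is correct and in fact more elementary than the paper's. Both arguments reduce to the equation $z^{2}+27(3b-n)^{2}=-4a^{3}$ and dispose of the case $a>0$ by positivity. For $a<0$ the paper rewrites the left-hand side as a norm from $\mathbb{Z}\bigl[\tfrac{1+\sqrt{-3}}{2}\bigr]$ and invokes unique factorisation together with the finiteness of the unit group to conclude that a fixed nonzero integer has only finitely many representations; you simply observe that $z^{2}+27m^{2}=-4a^{3}$ cuts out an ellipse in the $(m,z)$-plane, which is compact and hence contains only finitely many lattice points. The two arguments are really the same fact (a positive definite binary quadratic form represents a fixed integer only finitely often) seen through different lenses: yours is a direct boundedness argument, the paper's is the algebraic-number-theoretic packaging. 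Your route is shorter and avoids any appeal to the arithmetic of $\mathbb{Q}(\sqrt{-3})$; the paper's route generalises more readily to situations where one genuinely needs the multiplicative structure, but that extra strength is not used here.
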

\begin{proof}
Suppose $\Delta_{1}(n)$ is a square for infinitely many choices of $n\in \mathbb{Z}$ i.e., there exists an infinite family $(n,z_{n})\in \mathbb{Z}^2$ such that 
\begin{equation}\label{Label1DiscProof}
\Delta_{1}(n)=-{\left(4 \, a^{3} + 27 \, {\left(3 \, b - n\right)}^{2}\right)} = z_{n}^2.
\end{equation} It is clear there are only finitely many pairs $(n,0)$ in such a family $(n,z_{n})$, hence we restrict our considerations to $n$ such that $z_{n} \in \mathbb{Z}\backslash \{0\}$. If $a>0$ such a $z_{n}$ cannot exist, if $a<0$ let $a=-a'$ and we can rewrite (\ref{Label1DiscProof}) as 
$4(a^{\prime})^3-27(3b-n)^2=z_{n}^{2}$. Let \begin{align*}
    N:\mathbb{Z}\left[\frac{1+\sqrt{-3}}{2}\right]\rightarrow \mathbb{Z}
\end{align*}be the norm map associated to the field extension $\mathbb{Q}(\sqrt{-3})$. Then the existence of a family $(n,z_{n})$ implies to the existence of infinitely many $(x_{n},y_{n})\in \mathbb{Z}^2$ such that \begin{align}\label{LabelDiscProof3}
    N(x_{n}+\alpha y_{n})=4(a')^3
\end{align} where $\alpha = \frac{1+\sqrt{-3}}{2}$. As $\mathbb{Z}\left[\frac{1+\sqrt{-3}}{2}\right]$ is a unique factorization domain with only finitely many units, there are only finitely many $x_{n},y_{n} \in \mathbb{Z}$ that satisfy (\ref{LabelDiscProof3}), a contradiction. 
\end{proof}
\begin{lemma}\label{CubicDiscLemma2} $\Delta_{2}(n)$ is a square only for finitely many $n\in \mathbb{Z}$.
\end{lemma}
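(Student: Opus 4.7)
The plan is to reduce the question to counting integer points on a curve of genus one and then invoke Siegel's theorem; the main algebraic input will be an explicit factorisation of $\Delta_2(n)$.

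First, I would compute $\Delta_2(n)$ in closed form. Depressing $f_2$ via the substitution $x \mapsto x + 4a$ transforms it into $x^3 - 12a^2 x + \bigl(20a^3 + 27(3b-n)^2\bigr)$, whose discriminant $-4(-12a^2)^3 - 27\bigl(20a^3+27(3b-n)^2\bigr)^2$ simplifies after a short expansion to
\begin{equation*}
\Delta_2(n) \;=\; -243\bigl(4a^3 + 27(3b-n)^2\bigr)\bigl(4a^3 + 3(3b-n)^2\bigr).
\end{equation*}
(One may also recognise the pleasant identity $\Delta_2(n) = 243\,\Delta_1(n)\bigl(4a^3+3(3b-n)^2\bigr)$, though I will not actually need it.)

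Second, because $243 = 3\cdot 9^2$, the condition that $\Delta_2(n)$ is a square in $\mathbb{Z}$ is equivalent to
\begin{equation*}
F(n) \;:=\; -3\bigl(4a^3+27(3b-n)^2\bigr)\bigl(4a^3+3(3b-n)^2\bigr)
\end{equation*}
being a square in $\mathbb{Z}$. Viewed as a polynomial in $n$, $F(n)$ has degree four, and its four roots in $\overline{\mathbb{Q}}$ are the solutions of $(3b-n)^2 = -4a^3/27$ and $(3b-n)^2 = -4a^3/3$; since $a\neq 0$ by the standing assumption, these values of $(3b-n)^2$ are distinct and nonzero, so they yield four pairwise distinct values of $n$. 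Consequently the affine curve $C : z^2 = F(n)$ in $\mathbb{A}^2_{\mathbb{Q}}$ is geometrically irreducible, smooth, and its smooth projective model has genus one.

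Finally, every integer $n$ for which $\Delta_2(n)$ is a square produces an integer point $(n,z)\in C(\mathbb{Z})$, so by Siegel's theorem on integral points on affine curves of positive genus, only finitely many such $n$ can exist. The main obstacle is the first step: finding the compact factorisation $-243(4a^3+27t^2)(4a^3+3t^2)$ is what makes the genus-one structure visible. Once this factorisation is in hand, the distinctness of the four roots (guaranteed by $a\neq 0$) and Siegel's theorem conclude the argument without further difficulty.
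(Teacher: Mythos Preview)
Your argument is correct, and the key algebraic step---the factorisation $\Delta_2(n)=-243\bigl(4a^3+27(3b-n)^2\bigr)\bigl(4a^3+3(3b-n)^2\bigr)$---is exactly the one the paper uses. Where you diverge is in the finiteness step. The paper observes that for all sufficiently large $|n|$ both factors $4a^3+27(3b-n)^2$ and $4a^3+3(3b-n)^2$ are positive, so $\Delta_2(n)<0$ and hence cannot be a nonzero square; since $\Delta_2(n)=0$ for only finitely many $n$, this is an entirely elementary growth/sign argument. You instead package the condition as integral points on the genus-one curve $z^2=F(n)$ and invoke Siegel. Your route is perfectly valid and pleasantly uniform (the same template handles $\Delta_1$ and $\Delta_3$ too), but it imports a deep and generally ineffective theorem where a one-line inequality suffices; the paper's approach is both more elementary and effective, giving in principle an explicit bound on the exceptional $n$.
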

\begin{proof}
Suppose $\Delta_{2}(n)$ is a square for infinitely many choices of $n\in \mathbb{Z}$ i.e., there exists an infinite family $(n,z_{n})\in \mathbb{Z}^2$ such that \begin{align}\label{LabelDisc2Proof1}\Delta_{2}(n) = -243 \, {\left(4 \, a^{3} + 27 \, {\left(3 \, b - n\right)}^{2}\right)} {\left(4 \, a^{3} + 3 \, {\left(3 \, b - n\right)}^{2}\right)}=z_{n}^2.\end{align} As in Lemma \ref{CubicDiscLemma1} we can assume $z_{n}\in \mathbb{Z}\backslash \{0\}$. The statement (\ref{LabelDisc2Proof1}) is equivalent to
\begin{align}\label{LabelDisc2Proof2}
    -(4a^3+27(3b-n)^2)(4a^3+3(3b-n)^2)=3t_{n}^2
\end{align}for $t_{n}\in \mathbb{Z}\backslash \{0\}.$ By expanding brackets, we know that $-(16 \, a^{6} + 120 \, a^{3} (3b-n)^{2} + 81 \, (3b-n)^{4})=3t_{n}^2$. Hence, $3$ divides $a$ and clearly, we require $a<0$ as $t_{n}^2>0$. Let $-a=3a^{\prime}$ then by (\ref{LabelDisc2Proof2}) and dividing through by 3, we obtain $(36(a^{\prime})^3-9(3b-n)^2)((3b-n)^2-36(a^{\prime})^3)=t_{n}^2$. Let $x = (36(a^{\prime})^3-9(3b-n)^2)$ and $y = ((3b-n)^2-36(a^{\prime})^3)$, then for large enough $n$ we have that $y>0$ and $x<0$, hence such an infinite family $(n,z_{n})\in \mathbb{Z}^2$ cannot exist.
\end{proof}
\begin{lemma}\label{CubicDiscLemma3} $\Delta_{3}(n)$ is a square for finitely many $n\in \mathbb{Z}$.
\end{lemma}
\begin{proof} Suppose $\Delta_{3}(n)$ is a square in $\mathbb{Q}$ for infinitely many $n\in \mathbb{Z}$  i.e., there exists an infinite family $(n,z_{n})\in \mathbb{Z}\times \mathbb{Q}$ such that
\begin{equation}\label{Label3DiscProof1}
    \Delta_{3}(n)=\begin{cases}\frac{-243 \, {\left(4 \, a^{3} + 27 \, {\left(3 \, b - n\right)}^{2}\right)} {\left(4 \, a^{3} + 3 \, {\left(3 \, b - n\right)}^{2}\right)}}{-\, {\left(4 \, a^{3} + 27 \, {\left(3 \, b - n\right)}^{2}\right)}}=z_{n}^{2}\ &\text{if} \ 4 \, a^{3} + 27 \, {\left(3 \, b - n\right)}^{2}\neq 0,\\
    0 \ &\text{if} \ 4 \, a^{3} + 27 \, {\left(3 \, b - n\right)}^{2}=0.\end{cases}
\end{equation} Clearly, $\Delta_{3}(n)=0$ for only finitely many choices of $n$, so we can focus on the cases where $z_{n} \in \mathbb{Q}^{\times}$. Then (\ref{Label3DiscProof1}) is equivalent to
$\Delta_{3}(n)={{4 \, a^{3} + 3 \, {\left(3 \, b - n\right)}^{2}}}=3t_{n}^2$ where $t_{n}\in \mathbb{Z}\backslash\{0\}$. Clearly $3$ divides $a$, hence ${{36 \, (a')^{3} +  \, {\left(3 \, b - n\right)}^{2}}}=t_{n}^2$ where $a^{\prime}:=a/3$. Let $N:=36 \, (a')^{3}$ and $x_{n}:=3 \, b - n$ then 
$t_{n}^2-x_{n}^2=N$ and as $N\in \mathbb{Z}$ it has only finitely many factors, so there are only finitely many choices for $x_{n}$ and $t_{n}.$
\end{proof}
\begin{lemma}\label{SmoothCurveFinThin}
Fix $a,b\in \mathbb{Z}$ with $a$ non-zero. Denote by $C$ the affine curve $$
C: x^3 - 12ax^2 + 36a^2x + 27(3b-t)^2 + 4a^3=0 \subset \mathbb{A}^{2}_{\mathbb{Q}}
$$ in variables $t$ and $x$. Then the compactification $\tilde{C}$ of $C$ is smooth.
\end{lemma}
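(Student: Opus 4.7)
The strategy is a direct application of the Jacobian criterion: first on the affine open, then at the (single) point at infinity of the projective closure.

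First I would handle the affine part. Writing $F(t,x) = x^3 - 12ax^2 + 36a^2 x + 27(3b-t)^2 + 4a^3$, compute
\begin{align*}
\frac{\partial F}{\partial t} &= -54(3b-t), \\
\frac{\partial F}{\partial x} &= 3(x^2 - 8ax + 12a^2) = 3(x-2a)(x-6a).
\end{align*}
A singular affine point must satisfy $t = 3b$ and $x \in \{2a, 6a\}$. Substituting $t = 3b$ gives $F(3b,x) = x^3 - 12ax^2 + 36a^2x + 4a^3$, which evaluates to $36a^3$ at $x = 2a$ and to $4a^3$ at $x = 6a$. Since $a \neq 0$, neither candidate lies on $C$, so the affine curve is smooth.

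Next I would pass to the projective closure $\tilde C \subset \mathbb{P}^2_{\mathbb{Q}}$ by homogenising with a variable $Z$: the total degree is $3$, giving
\[
F_h(X,T,Z) = X^3 - 12aX^2Z + 36a^2 XZ^2 + 27(3bZ - T)^2 Z + 4a^3 Z^3.
\]
The points at infinity satisfy $Z = 0$, which forces $X^3 = 0$ and hence $X = 0$; so $P = [0:1:0]$ is the only such point. To check smoothness there, I switch to the affine chart $T = 1$ with coordinates $(X,Z)$ and compute
\[
\frac{\partial F_h}{\partial Z}\bigg|_{(0,0)} = 27(3b \cdot 0 - 1)^2 = 27 \neq 0,
\]
so $P$ is a smooth point of $\tilde C$, completing the proof.

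There is no real obstacle here; the only subtle point is making sure the point at infinity is handled correctly, since the original polynomial is degree $3$ in $x$ but only degree $2$ in $t$. This is why the leading behaviour at infinity is controlled by the $x^3$ term, yielding a unique point at infinity whose smoothness is then immediate from the nonvanishing of the $27(3bZ-T)^2 Z$ contribution to $\partial F_h/\partial Z$.
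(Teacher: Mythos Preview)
Your proof is correct and follows essentially the same approach as the paper: both apply the Jacobian criterion to the projective closure, and the case analysis (affine candidates $t=3b$, $x\in\{2a,6a\}$ versus the point at infinity) matches the paper's dichotomy $t=3by$ versus $y=0$ in the homogeneous setting. The only difference is organisational---you dehomogenise first and treat the point at infinity separately, whereas the paper works with the homogeneous form $F(t,x,y)$ throughout---but the computations and conclusions are identical.
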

\begin{proof}
Let $F(t,x,y)=x^3-12ax^2y+36a^2xy^2+27y(3by-t)^2+4a^3y^3$, then the partial derivatives of $F$ are $
\frac{\partial F}{\partial t}=-54 \, {\left(3 \, b y - t\right)} y, \ \frac{\partial F}{\partial x}=36 \, a^{2} y^{2} - 24 \, a x y + 3 \, x^{2},\ \frac{\partial F}{\partial y}=12 \, a^{3} y^{2} + 72 \, a^{2} x y - 12 \, a x^{2} + 162 \, {\left(3 \, b y - t\right)} b y + 27 \, {\left(3 \, b y - t\right)}^{2}$. Assume that $\tilde{C}$ has a non-smooth point $p=[t,x,y]$. Then $F(p)=0$ and all the partial derivatives of $F$ will vanish at $p$. As $\frac{\partial F}{\partial t}(p)=0$, then either $y=0$ or $t=3by$. If $y=0$ clearly $x=0$, as $\frac{\partial F}{\partial y}(p)=x=y=0$ this implies $t=0$ which is a contradiction. Suppose $t=3by$, as $\ \frac{\partial F}{\partial x}(p)=0$ this implies $x=6ay$ or $x=2ay$. However, as we require $F(p)=0$, if $t=3by$ and $x=6ay$ or $x=2ay$ this implies $a=0$ which is a contradiction. Hence, $\tilde{C}$ is smooth.
\end{proof}
\begin{proposition}\label{ApplySigels}
Let $f(t,x)=x^3-12ax^2+36a^2x+27(3b-t)^2+4a^3$ such that $a,b\in \mathbb{Z}$ and $a$ is non-zero. Then for all but finitely many $n \in \mathbb{Z}$, the polynomial $f(n,x)\in \mathbb{Z}[x]$ is irreducible.
\end{proposition}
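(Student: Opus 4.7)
My plan is to deduce the proposition from Siegel's theorem on integral points, using Lemma \ref{SmoothCurveFinThin} to guarantee smoothness of the compactification of the auxiliary curve $C$.

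First, I would exploit the fact that $f(t,x)$, viewed as a polynomial in $x$, is monic of degree $3$. Thus if $f(n,x)\in\mathbb{Z}[x]$ is reducible for some $n\in\mathbb{Z}$, then it has a linear factor over $\mathbb{Q}$, and by the rational root theorem applied to a monic integer polynomial, its rational root must actually be an integer $x_n\in\mathbb{Z}$. In particular, every $n\in\mathbb{Z}$ for which $f(n,x)$ is reducible produces an integral point $(n,x_n)$ on the affine curve
\[
C: f(t,x) = x^3-12ax^2+36a^2x+27(3b-t)^2+4a^3=0 \subset \mathbb{A}^2_{\mathbb{Q}}.
\]

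Next I would examine the geometry of $C$. By Lemma \ref{SmoothCurveFinThin}, the projective closure $\tilde{C}$ of $C$ in $\mathbb{P}^2_{\mathbb{Q}}$ is smooth; the defining equation $F(t,x,y)$ is homogeneous of degree $3$ (each monomial, including $27y(3by-t)^2$, has total degree $3$ in $t,x,y$), so $\tilde{C}$ is a smooth plane cubic, hence a curve of genus $1$. The locus at infinity is cut out by $y=0$, which forces $x^3=0$ and gives the single point $[1:0:0]\in\tilde{C}$.

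Now I would invoke Siegel's theorem: a smooth affine curve over $\mathbb{Q}$ whose smooth projective model has positive genus has only finitely many $S$-integral points for any finite set of places $S$. Taking $S$ to consist of only the archimedean place, it follows that $C(\mathbb{Z})$ is finite. Combining this with the first step, the set of $n\in\mathbb{Z}$ for which $f(n,x)$ is reducible is finite, which is the claim.

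The main step that actually requires work is the smoothness and genus computation for $\tilde C$, but this is exactly what Lemma \ref{SmoothCurveFinThin} and the degree of the defining equation give for free; after that, the argument is a clean application of Siegel's theorem, with the monic-in-$x$ structure of $f$ translating reducibility into an integral point. No obstacle of substance remains.
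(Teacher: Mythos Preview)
Your proposal is correct and follows essentially the same route as the paper: use Lemma \ref{SmoothCurveFinThin} together with the genus--degree formula to see that the affine curve $C$ has a smooth projective model of genus $1$, and then invoke Siegel's theorem. The only difference is that you spell out explicitly the monic/rational-root step translating reducibility of $f(n,x)$ into an integral point on $C$, which the paper absorbs into its citation of Siegel's theorem.
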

\begin{proof}
By Lemma \ref{SmoothCurveFinThin} and the genus-degree formula, the curve $f(t,x)\subset\mathbb{A}^{2}_{\mathbb{Q}}$ has genus 1. Every specialisation $t=n\in \mathbb{Z}$ gives rise to a monic cubic polynomial $f(n,x)\in \mathbb{Q}[x]$. By Gauss's Lemma \cite[Thm 2.3]{S02}, we see that $f(n,x)$ is irreducible over $\mathbb{Q}[x]$ if and only if $f(n,x)$ is irreducible over $\mathbb{Z}[x]$. Hence, we can apply Siegel’s Theorem on curves \cite[Thm 3.2]{SC14} to show there are only finitely many specialisations $t=n\in \mathbb{Z}$ such that $f(n,x)$ is reducible.
\end{proof}
\begin{corollary}\label{AllbutfinTrivialBrauer} Fix $a,b\in \mathbb{Z}$ such that $a$ is non-zero then for all but finitely many choices of $n\in \mathbb{Z}$, $\Br X_{n}= \Br \mathbb{Q}$.
\end{corollary}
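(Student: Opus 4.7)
The plan is to deduce the corollary directly by showing that the exceptional set $A_3 = A_2 \cup \{n \in \mathbb{Z} : X_n \text{ is not smooth}\}$ is finite; once this is done, Proposition \ref{TrivialBrauerGroup} immediately gives $\Br X_n = \Br \mathbb{Q}$ for every $n \notin A_3$, which is exactly what the corollary asserts.

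The strategy is to decompose $A_3$ as a finite union of subsets, each of which has already been shown to be finite earlier in this section. First I would invoke Proposition \ref{SmoothFinite} to handle the smoothness exceptions: only finitely many $n \in \mathbb{Z}$ make $X_n$ non-smooth. Next, by the definition
\[
A_2 = \{n \in \mathbb{Z} : \Delta_1(n), \Delta_2(n), \text{ or } \Delta_3(n) \text{ is a square, or } f_2 \text{ is reducible}\},
\]
$A_2$ is the union of four sets. The three discriminant-square sets are finite by Lemmas \ref{CubicDiscLemma1}, \ref{CubicDiscLemma2}, and \ref{CubicDiscLemma3} respectively (these use the hypothesis that $a$ is nonzero, via the norm argument over $\mathbb{Z}[(1+\sqrt{-3})/2]$ and the sign considerations in the three lemmas). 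The set of $n$ for which $f_2(n,x)$ is reducible is finite by Proposition \ref{ApplySigels}, which invoked Siegel's theorem applied to the genus-one curve cut out by $f_2(t,x)=0$ (this is where the assumption $a \neq 0$ is essential for the curve to be smooth).

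Assembling these four finiteness statements together with Proposition \ref{SmoothFinite}, the set $A_3$ is a finite union of finite sets, hence finite. For any $n \in \mathbb{Z} \setminus A_3$, the surface $X_n$ is smooth and $n \notin A_2$, so Proposition \ref{TrivialBrauerGroup} applies to give $\Br X_n = \Br \mathbb{Q}$. Since this is essentially a bookkeeping argument stringing together results already proved, no step presents a genuine obstacle; the substantive work has been done in the preceding lemmas, and the only thing to verify is that every condition defining $A_3$ is covered by one of the cited finiteness results.
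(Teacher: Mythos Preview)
Your proposal is correct and matches the paper's proof essentially verbatim: the paper likewise reduces to showing $A_3$ is finite and cites Lemmas \ref{CubicDiscLemma1}, \ref{CubicDiscLemma2}, \ref{CubicDiscLemma3} together with Propositions \ref{SmoothFinite} and \ref{ApplySigels}, then invokes Proposition \ref{TrivialBrauerGroup}. Your write-up is a faithful expansion of exactly this bookkeeping.
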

\begin{proof}
By Proposition \ref{TrivialBrauerGroup} it is sufficient to show the set $A_{3}$ is finite, this follows from Lemmas \ref{CubicDiscLemma1},\ref{CubicDiscLemma2}, \ref{CubicDiscLemma3} and Propositions \ref{SmoothFinite}, \ref{ApplySigels}.
\end{proof}
\section{Geometry of the affine surfaces}\label{GeometryOfAffine}
When studying the geometry of the affine cubic surfaces $\mathcal{U}_{n}$, one notices that the geometry is similar in certain aspects to that of the sum of three cubes. An example of this is, the presence of transcendental Brauer group elements in certain cases. We adapt the argument given by Wittenberg and Colliot-Thélène's in \cite[Prop 3.1]{CTW12} to prove that for certain cases there are no transcendental elements in the $\Br U_{n}$. Furthermore, the Brauer group of the compactification of $U_{n}$ coincides with $\Br U_{n}$. 
\subsection{Brauer group of the affine surfaces}
\begin{lemma}[{\cite[Lemma 3.2]{CTW12}}]\label{FermatCubic}
Let $D \subset \mathbb{P}^{2}_{\mathbb{Q}}$ be a plane curve with equation $x_{1}^3+x_{2}^3+x_{3}^3=0$ and denote by $P_{1},P_{2},P_{3} \in D(\mathbb{Q})$ the points with coordinates $P_{1}=[1:-1:0]$, $P_{2}=[-1:0:1]$, $P_{3}=[0:-1:1]$. Then
\begin{enumerate}
    \item The image of $\HH_{\et}^{1}(D,\mathbb{Q}/\mathbb{Z}) \rightarrow \HH_{\et}^{1}(\mathbb{Q},\mathbb{Q}/\mathbb{Z})$ evaluated at the point $P_{1}$ is isomorphic to $\mathbb{Z}/3\mathbb{Z}$.
    \item An element of $\HH_{\et}^{1}(D,\mathbb{Q}/\mathbb{Z})$ which is zero at $P_{1}$ and $P_{2}$ is zero.
\end{enumerate}
\end{lemma}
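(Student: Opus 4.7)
The plan is to identify $D$ with the CM elliptic curve $E/\mathbb{Q}$ having $P_{1}$ as origin and to analyse $\HH^{1}_{\et}(D,\mathbb{Q}/\mathbb{Z})$ via the Hochschild--Serre spectral sequence, reducing both parts to a short $3$-torsion calculation on $E$. First I would verify that $P_{1},P_{2},P_{3}$ all lie on the line $x_{1}+x_{2}+x_{3}=0$, and a tangency computation shows each is a flex; hence in the group law $P_{1}+P_{2}+P_{3}=0$ and every $P_{i}$ has order $3$, so $\langle P_{2}\rangle\cong \mathbb{Z}/3\mathbb{Z}$ is the full rational $3$-torsion subgroup of $E$. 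The Weil pairing identifies $E[3]/\langle P_{2}\rangle$ with $\mu_{3}$ as $\Gal(\bar{\mathbb{Q}}/\mathbb{Q})$-modules, and because the subgroup $\langle P_{2}\rangle$ is rational the sequence splits, yielding $E[3]\cong \mathbb{Z}/3\oplus \mu_{3}$ as Galois modules.

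Since $P_{1}$ is a rational point, Hochschild--Serre collapses into a canonical split decomposition
\[
\HH^{1}_{\et}(D,\mathbb{Q}/\mathbb{Z})\;\cong\; \HH^{1}(\mathbb{Q},\mathbb{Q}/\mathbb{Z})\,\oplus\,\HH^{1}_{\et}(\bar{D},\mathbb{Q}/\mathbb{Z})^{\Gal(\bar{\mathbb{Q}}/\mathbb{Q})},
\]
in which $\ev_{P_{1}}$ is the projection onto the first summand. To obtain part~(1) I would compute the geometric summand prime-by-prime: $\HH^{1}_{\et}(\bar{D},\mathbb{Z}/\ell^{k})\cong \Hom(E[\ell^{k}],\mathbb{Z}/\ell^{k})$ as Galois modules; for $\ell\neq 3$ the CM action of $\mathbb{Z}[\omega]$ on $E[\ell^{k}]$, combined with Mazur's theorem ruling out additional rational torsion, forces the Galois invariants to vanish, while for $\ell=3$ the decomposition $E[3^{k}]\cong \mathbb{Z}/3^{k}\oplus \mu_{3^{k}}$ makes the invariants exactly $\mathbb{Z}/3\mathbb{Z}$. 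This isolates the $\mathbb{Z}/3$ asserted in part~(1) as the piece of $\HH^{1}_{\et}(D,\mathbb{Q}/\mathbb{Z})$ not accounted for by pullback via $\ev_{P_{1}}$.

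For part~(2), an element $\alpha$ with $\ev_{P_{1}}(\alpha)=0$ lies in the geometric $\mathbb{Z}/3$-summand and is classified (up to sign) by the étale $\mathbb{Z}/3$-cover of $D$ obtained by pushing the $3$-isogeny $[3]\colon E\to E$ along the projection $E[3]\twoheadrightarrow \mathbb{Z}/3$. Evaluating this cover at $P_{2}$ returns the Kummer image of $P_{2}$ under the connecting map $E(\mathbb{Q})/3E(\mathbb{Q})\hookrightarrow \HH^{1}(\mathbb{Q},E[3])\to \HH^{1}(\mathbb{Q},\mathbb{Z}/3)$; since $P_{2}$ generates $E(\mathbb{Q})=\mathbb{Z}/3\mathbb{Z}$, this image is nonzero, so $\ev_{P_{2}}$ is injective on the geometric summand and joint vanishing at $P_{1}$ and $P_{2}$ forces $\alpha=0$.

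The main obstacle is the $\ell$-primary vanishing of the geometric summand for $\ell\neq 3$: one must verify that no nonzero $\Gal(\bar{\mathbb{Q}}/\mathbb{Q})$-equivariant functional $E[\ell^{k}]\to \mathbb{Z}/\ell^{k}$ exists. This is standard in light of the explicit CM structure and Mazur's theorem, but it needs separate treatment of primes $\ell$ that split or remain inert in $\mathbb{Q}(\omega)/\mathbb{Q}$; making the argument uniform in $\ell$ and $k$ is the most delicate step of the proof.
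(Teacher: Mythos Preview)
The paper does not prove this lemma; it is quoted from \cite[Lemma~3.2]{CTW12} and used as a black box, so there is no proof in the paper to compare against. Your outline is close in spirit to the argument in that reference, but it contains a genuine error at the prime~$3$.

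Your claimed decomposition $E[3^{k}]\cong \mathbb{Z}/3^{k}\oplus \mu_{3^{k}}$ as Galois modules is false for every $k\ge 2$: a trivial $\mathbb{Z}/3^{k}$ summand would produce a rational point of order $3^{k}$ on $E$, contradicting $E(\mathbb{Q})\cong\mathbb{Z}/3\mathbb{Z}$, a fact you invoke elsewhere. Worse, even if that decomposition held, the Galois invariants of $\Hom(E[3^{k}],\mathbb{Z}/3^{k})\cong \mathbb{Z}/3^{k}\oplus \mathbb{Z}/3^{k}(-1)$ would be $\mathbb{Z}/3^{k}$, not $\mathbb{Z}/3$, and the direct limit over $k$ would then be $\mathbb{Q}_{3}/\mathbb{Z}_{3}$; so the computation as written does not establish part~(1). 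The fix is to show that for $k\ge 2$ there is no \emph{surjective} Galois-equivariant map $E[3^{k}]\to\mathbb{Z}/3^{k}$: its kernel would be a rational cyclic subgroup $C$ of order $3^{k}$ with $E[3^{k}]/C$ carrying trivial Galois action, so the isogenous curve $E/C$ (still in the isogeny class $27a$) would acquire a rational point of order $3^{k}$, which none of those curves has. Hence $\Hom_{\Gal}(E[3^{k}],\mathbb{Z}/3^{k})$ stabilises at $\mathbb{Z}/3$ from $k=1$ onward.

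Your part~(2) also needs one more sentence. That $P_{2}$ generates $E(\mathbb{Q})/3E(\mathbb{Q})$ only gives nonvanishing of its Kummer class in $\HH^{1}(\mathbb{Q},E[3])$, not after projecting along $E[3]\twoheadrightarrow E[3]/\mu_{3}\cong\mathbb{Z}/3$. You must still check that $P_{2}$ does not lie in the image of $(E/\mu_{3})(\mathbb{Q})\to E(\mathbb{Q})$; this follows because $E/\mu_{3}$ again has rank~$0$ and its rational $3$-torsion coincides with the kernel of the dual isogeny, so that image is trivial.
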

\begin{proposition}\label{AffineBrauerGroup} Fix choice of $n \in \mathbb{Z}$ such that $f_{1}$ has a non-square discriminant and the cubic surface $X_{n}$ is smooth. Then the natural map $\Br X_{n} \rightarrow \Br U_{n}$ is an isomorphism.
\end{proposition}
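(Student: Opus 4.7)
My plan is to apply Grothendieck's purity theorem to reduce the claim to the vanishing of a residue map, and then to compute these residues using lines on $X_n$ that pass through specific rational points of the boundary divisor.

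The complement $D := X_n \setminus U_n$ is the intersection of $X_n$ with $\{x_0 = 0\}$, which is the smooth Fermat cubic $x_1^3 + x_2^3 + x_3^3 = 0 \subset \mathbb{P}^2_{\mathbb{Q}}$. Grothendieck's purity theorem \cite[Thm 3.7.1]{CS21} yields an exact sequence
\[
0 \to \Br X_n \to \Br U_n \xrightarrow{\partial} \HH^{1}_{\et}(D, \mathbb{Q}/\mathbb{Z}),
\]
so injectivity of $\Br X_n \to \Br U_n$ is automatic, and I need only show that the residue map $\partial$ vanishes. By Lemma \ref{FermatCubic}(2), it suffices to prove that for every $\alpha \in \Br U_n$ the evaluations $(\partial \alpha)(P_1)$ and $(\partial \alpha)(P_2)$ vanish in $\HH^{1}(\mathbb{Q}, \mathbb{Q}/\mathbb{Z})$.

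To handle $(\partial \alpha)(P_1)$, I would restrict $\alpha$ along each of the three lines $L_{3,1}, L_{3,2}, L_{3,3}$, where $L_{3,i}$ is defined over $k_i := \mathbb{Q}(r_i)$ for $r_i$ a root of $f_1$. Each line passes through $P_1$ and meets $D$ only there, so $L_{3,i} \cap U_n \cong \mathbb{A}^1_{k_i}$ with $P_1$ as the point at infinity. The pullback of $\alpha$ lands in $\Br \mathbb{A}^1_{k_i} = \Br k_i$ (since $k_i$ is perfect), and such a constant class has vanishing residue at every closed point of $\mathbb{P}^1_{k_i}$, including $P_1$. By functoriality of residues under the closed immersion $L_{3,i} \hookrightarrow X_n \times_{\mathbb{Q}} k_i$, the restriction of $(\partial \alpha)(P_1)$ to $\HH^{1}(k_i, \mathbb{Q}/\mathbb{Z})$ must therefore vanish for each $i = 1, 2, 3$.

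To conclude that $(\partial \alpha)(P_1) = 0$ in $\HH^{1}(\mathbb{Q}, \mathbb{Q}/\mathbb{Z})$ itself, I would invoke the non-square discriminant hypothesis: it forces the Galois group of the splitting field of $f_1$ to be either $S_3$ (when $f_1$ is irreducible) or $C_2$ (when $f_1$ factors as a linear times an irreducible quadratic). In the $S_3$ case, the stabilisers of the three roots generate $S_3$, so the three conjugate cubic subfields $k_i$ have trivial common Galois subfield and the map $\HH^{1}(\mathbb{Q}, \mathbb{Q}/\mathbb{Z}) \to \prod_i \HH^{1}(k_i, \mathbb{Q}/\mathbb{Z})$ is injective; in the $C_2$ case, one of the $k_i$ is already $\mathbb{Q}$, so the single restriction is an isomorphism. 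An identical argument using the lines $L_{2,1}, L_{2,2}, L_{2,3}$ through $P_2$ yields $(\partial \alpha)(P_2) = 0$, and Lemma \ref{FermatCubic}(2) then finishes the proof. The main obstacle I anticipate will be making the functoriality of the residue map fully precise along the closed immersions $L_{3,i} \hookrightarrow X_n$ and $L_{2,i} \hookrightarrow X_n$, especially since these lines are generally defined only over the non-Galois cubic extensions $k_i/\mathbb{Q}$.
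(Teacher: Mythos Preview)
Your argument is correct and follows the same template as the paper's proof: purity reduces to the residue map $\partial$, lines on $X_n$ through the rational points $P_i\in D$ force the evaluations $(\partial\alpha)(P_i)$ to die after restriction to a suitable field, an injectivity-of-restriction argument descends this to $\mathbb{Q}$, and Lemma~\ref{FermatCubic}(2) finishes.

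The only organisational difference is in which lines you pick. The paper fixes a single root $r_1$ of $f_1$, works over the one field $k=\mathbb{Q}(r_1)$, and uses the three lines $L_{1,1},L_{2,1},L_{3,1}$, one through each of $P_1,P_2,P_3$; it then needs only the injectivity of $\Res_{k/\mathbb{Q}}:\HH^1(\mathbb{Q},\mathbb{Q}/\mathbb{Z})\to\HH^1(k,\mathbb{Q}/\mathbb{Z})$ for a single (trivial or non-Galois cubic) extension. You instead fix a point and vary the root, using all three $L_{3,j}$ through $P_1$ (and all three $L_{2,j}$ through $P_2$), and then appeal to injectivity of the diagonal map into $\prod_i \HH^1(k_i,\mathbb{Q}/\mathbb{Z})$. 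This is fine, but note that in the $S_3$ case one line already suffices: restriction to any one of the $k_i$ is injective, since a character of $G_{\mathbb{Q}}$ killed by an index-$3$ non-normal subgroup is trivial. So your use of all three roots, and your ``stabilisers generate $S_3$'' argument, are correct but redundant; the paper's single-field version is the more economical packaging of the same idea.
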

\begin{proof}Denote by $D \subset X$ the hyperplane section defined by $x_{0}=0$. Applying Grothedieck's purity theorem \cite[Thm 3.7.1]{CTS21}, gives an exact sequence
$$
    0\rightarrow \Br X_{n} \rightarrow \Br U_{n} \xrightarrow[]{\partial_{D}} \HH_{\et}^{1}(D,\mathbb{Q}/\mathbb{Z}).
$$ We want to show that the image of the residue map $\partial_{D}$ is 0. Let $A\in \Br U_{n}$ but $A\notin \Br X_{n}$ and let $m = \partial_{D}(A) \in \HH_{\et}^{1}(D,\mathbb{Q}/\mathbb{Z})$. Denote by $P_{1},P_{2},P_{3}$ the points in $X_{n}(\mathbb{Q})$ with coordinates in $[x_{0}:x_{1}:x_{2}:x_{3}]$ given by $P_{1}=[0:1-1:0], P_{2}=[0:-1:0:1], P_{3}=[0:0:-1:1].$ 
Let $k=\mathbb{Q}[x]/(f_{1}(x))$ where $r$ is the image of $x$ and $L_{i}:=L_{i,1}\subset X_{n}\times k$ for $i=1,2,3$. Each line $L_{i}$ on $X_{n} \times k$ intersects the divisor $D \times k$ at a unique point $P'_{i}:=P_{i} \times k$. This gives the following commutative diagram for each $i$
\[ \begin{tikzcd}
0 \arrow{r} &\Br X_{n} \arrow{r} \arrow[swap]{d} & \Br U_{n} \arrow{d} \arrow{r}{\partial_{D}} & \HH_{\et}^{1}(D,\mathbb{Q}/\mathbb{Z}) \arrow{d}{\alpha_{i}} \\%
0 \arrow{r} &\Br L_{i} \arrow{r}& \Br (L_{i} \backslash P'_{i}) \arrow{r} & \HH_{\et}^{1}(P'_{i},\mathbb{Q}/\mathbb{Z})
\end{tikzcd}
\] Let $A_{i} \in \Br (L_{i}\backslash P'_{i})$ be the restriction of $A$ to $L_{i}\backslash P'_{i}$. As $L_{i}\backslash P'_{i} \cong \mathbb{A}_{k}^{1}$ we have that $\Br(L_{i}\backslash P'_{i}) = \Br(k)$ hence the class of $A_{i}$ is constant, and it's residue at the point $P'_{i}$ is zero. One can now deduce that $\im(\partial_{D}) \subset \ker(\alpha_{i})$ for all $i$. As $P_{i}\in X_{n}(\mathbb{Q})$ we have that $m(P_{i})\in \HH^{1}_{\et}(\mathbb{Q},\mathbb{Q}/\mathbb{Z})$ then the image if $m(P_{i})$ under the restriction map
$$
    \Res_{k/\mathbb{Q}}:\HH_{\et}^{1}(\mathbb{Q},\mathbb{Q}/\mathbb{Z})\rightarrow \HH_{\et}^{1}(k,\mathbb{Q}/\mathbb{Z})
$$ is $m(P'_{i})$. The map $\Res_{k/\mathbb{Q}}$ is injective as $k$ is either trivial or a non-Galois cubic extension of $\mathbb{Q}$, as we require $m(P'_{i})=0$ for $i=1,2,3$ the injectivity of the restriction map implies $m(P_{i})=0$. Using Lemma \ref{FermatCubic} we can deduce that $m=0$.
\end{proof}
\section{Main theorems}\label{MainThms}
\begin{theorem}\label{GeneralCaseThm}
Let $f(x)=u^3+au+b \in \mathbb{Z}[u]$ where $a$ is non-zero and let $n\in \mathbb{Z}$ such that $n\notin A_{3}$. Consider the affine surface $\mathcal{U}_{n}$ over $\mathbb{Z}$ defined by
$$
    \mathcal{U}_{n}:f(u_{1})+f(u_{2})+f(u_{3})=n. 
$$ Denote $U_{n}=\mathcal{U}_{n} \times\mathbb{Q}$, then $\Br U_{n}=\Br {\mathbb{Q}}$ and there is no-integral Brauer-Manin obstruction to the Hasse principle on $\mathcal{U}_{n}$.
\end{theorem}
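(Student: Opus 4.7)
The plan is to assemble this theorem almost entirely from the results already proved in Sections \ref{GeometryOfCompact}--\ref{GeometryOfAffine}, since the hypothesis $n \notin A_{3}$ has been engineered precisely to make both key propositions apply. The only genuinely new ingredient is the deduction that triviality of $\Br U_{n}/\Br\mathbb{Q}$ implies the absence of an integral Brauer--Manin obstruction, which is a formal consequence of global class field theory.

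First I would unpack the hypothesis. By definition $A_{3} = A_{2} \cup \{n : X_{n} \text{ not smooth}\}$, so $n \notin A_{3}$ gives us (a) $X_{n}$ is smooth over $\mathbb{Q}$, and (b) $n \notin A_{2}$, which by definition means $\Delta_{1}(n)$, $\Delta_{2}(n)$, $\Delta_{3}(n)$ are all non-squares and $f_{2}$ is irreducible. Proposition \ref{TrivialBrauerGroup} then applies directly and yields $\Br X_{n} = \Br \mathbb{Q}$. Moreover, since $\Delta_{1}(n)$ is not a square, $f_{1}$ has non-square discriminant, so the hypotheses of Proposition \ref{AffineBrauerGroup} are met and the natural inclusion $\Br X_{n} \hookrightarrow \Br U_{n}$ is an isomorphism. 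Combining these, $\Br U_{n} = \Br \mathbb{Q}$.

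Next I would deduce the absence of an integral Brauer--Manin obstruction. If $\mathcal{U}_{n}(\mathbb{A}_{\mathbb{Z}}) = \emptyset$ there is nothing to prove, so assume this set is non-empty and pick an adelic point $(x_{v})_{v}$. For any $\alpha \in \Br U_{n}$, the equality $\Br U_{n} = \Br_{0} U_{n}$ means $\alpha$ is the image of some $\alpha_{0} \in \Br \mathbb{Q}$. Then $\alpha(x_{v}) = \alpha_{0} \in \Br \mathbb{Q}_{v}$ for every $v$, and the reciprocity sequence
\[
\SESA{\Br \mathbb{Q}}{\bigoplus_{v \in \Omega} \Br \mathbb{Q}_{v}}{\mathbb{Q}/\mathbb{Z}}{\sum_{v}\inv_{v}}
\]
recalled in Section \ref{IBMO} gives $\sum_{v} \inv_{v}(\alpha(x_{v})) = \sum_{v} \inv_{v}(\alpha_{0}) = 0$. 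Hence $(x_{v})_{v} \in \mathcal{U}_{n}(\mathbb{A}_{\mathbb{Z}})^{\Br}$, so $\mathcal{U}_{n}(\mathbb{A}_{\mathbb{Z}})^{\Br} = \mathcal{U}_{n}(\mathbb{A}_{\mathbb{Z}}) \neq \emptyset$, meaning there is no integral Brauer--Manin obstruction to the Hasse principle.

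There is no real obstacle here: the work has all been done in the preceding sections, and this theorem is essentially the bookkeeping step that packages Propositions \ref{TrivialBrauerGroup} and \ref{AffineBrauerGroup} together with the standard reciprocity argument. The only point requiring care is verifying that the hypotheses of \emph{both} propositions are simultaneously implied by $n \notin A_{3}$, which, as noted above, is immediate from the definitions of $A_{2}$ and $A_{3}$.
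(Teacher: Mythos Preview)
Your proposal is correct and follows essentially the same route as the paper: invoke Proposition~\ref{TrivialBrauerGroup} (the paper routes this through Corollary~\ref{AllbutfinTrivialBrauer}, but the content is the same) to get $\Br X_{n}=\Br\mathbb{Q}$, then Proposition~\ref{AffineBrauerGroup} to get $\Br U_{n}=\Br X_{n}$, and finally observe that a constant Brauer group gives no obstruction. Your write-up is in fact slightly more explicit than the paper's in verifying that $n\notin A_{3}$ supplies the non-square discriminant hypothesis of Proposition~\ref{AffineBrauerGroup} and in spelling out the reciprocity step.
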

\begin{proof}
In Proposition \ref{AllbutfinTrivialBrauer} we showed that if $n\notin A_{3}$ then $\Br X_{n}=\Br \mathbb{Q}$. Furthermore, it follows from Proposition \ref{AffineBrauerGroup} for such choices of $n$ we have $\Br U_{n}= \Br X_{n}=\Br \mathbb{Q}$. If $\mathcal{U}_{n}$ is not everywhere locally soluble for some $n$ we automatically have
$$
    \mathcal{U}_{n}(\mathbb{Z})=\emptyset \  \text{and} \  \mathcal{U}_{n}(\mathbb{A}_{\mathbb{Z}}) = \emptyset.
$$
In this case we say there is no integral Brauer-Manin obstruction to the Hasse principle. We now assume $\mathcal{U}_{n}$ is everywhere locally soluble. As $\Br U_{n}=\Br \mathbb{Q}$ the statement follows.
\end{proof}
\begin{remark}
If $n\in A_{3}\backslash A_{1}$ then $f_{1}$ is reducible. This implies $\mathcal{U}_{n}(\mathbb{Z})\neq 0$, hence in this case the integral Hasse principle holds.
\end{remark}
\begin{theorem}\label{WCLTCASE}
Let $f(u)=u^3+a_{2}u^2+a_{1}u+a_{0} \in \mathbb{Z}[u]$ and consider the affine cubic surface over $\mathbb{Z}$ $$
    \mathcal{U}_{n}:f(u_{1})+f(u_{2})+f(u_{3})=n
$$for some $n\in \mathbb{Z}$. If $3a_{1}-a_{2}^2=0$ then there is no integral Brauer-Manin obstruction to the Hasse principle.
\end{theorem}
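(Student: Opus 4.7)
The plan is to reduce the statement directly to the theorem of Colliot-Thélène and Wittenberg on sums of three cubes, which was cited in the introduction as \cite[Thm 4.1a]{CTW12}. The key observation is that the hypothesis $3a_{1}-a_{2}^2=0$ is exactly what is needed to complete the cube \emph{over $\mathbb{Z}$} (rather than only over $\mathbb{Q}$), thereby producing an isomorphism of $\mathbb{Z}$-schemes between $\mathcal{U}_{n}$ and a sum-of-three-cubes surface.

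First I would note that $a_{2}^{2}=3a_{1}$ forces $3\mid a_{2}^{2}$, hence $3\mid a_{2}$ since $3$ is prime. Writing $a_{2}=3c$ with $c\in\mathbb{Z}$, the hypothesis gives $a_{1}=3c^{2}$, so
\[
    f(u)=u^{3}+3cu^{2}+3c^{2}u+a_{0}=(u+c)^{3}+(a_{0}-c^{3}).
\]
Setting $b:=a_{0}-c^{3}\in\mathbb{Z}$ and introducing new variables $v_{i}:=u_{i}+c$ (an isomorphism of affine schemes over $\mathbb{Z}$, since $c\in\mathbb{Z}$), the defining equation of $\mathcal{U}_{n}$ transforms into
\[
    v_{1}^{3}+v_{2}^{3}+v_{3}^{3}=n-3b.
\]
Call this $\mathbb{Z}$-scheme $\mathcal{V}_{m}$ with $m:=n-3b$.

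Second, since the change of variables is defined over $\mathbb{Z}$ and is a $\mathbb{Z}$-isomorphism, it induces bijections on $\mathbb{Z}$-points, on $\mathbb{Z}_{p}$-points for every prime $p$, on $\mathbb{R}$-points of the generic fibre, and on $\mathbb{Q}$-points of the generic fibre, all compatibly with the Brauer pairing. In particular
\[
    \mathcal{U}_{n}(\mathbb{Z})=\emptyset \iff \mathcal{V}_{m}(\mathbb{Z})=\emptyset,
    \qquad \mathcal{U}_{n}(\mathbb{A}_{\mathbb{Z}})^{\Br}=\emptyset \iff \mathcal{V}_{m}(\mathbb{A}_{\mathbb{Z}})^{\Br}=\emptyset,
\]
so the presence or absence of an integral Brauer--Manin obstruction to the Hasse principle is the same for $\mathcal{U}_{n}$ and $\mathcal{V}_{m}$.

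Third, apply \cite[Thm 4.1a]{CTW12}: for every integer $m$, the surface $v_{1}^{3}+v_{2}^{3}+v_{3}^{3}=m$ has no integral Brauer--Manin obstruction to the Hasse principle. Applied to $m=n-3b$, this finishes the proof. There is no serious obstacle here; the only point requiring attention is the integrality of $c$, which is precisely the content of the divisibility argument in the first step. Note also that this theorem does not overlap with Theorem \ref{GeneralCaseThm}, since the hypothesis $3a_{1}=a_{2}^{2}$ is exactly the case where completing the cube yields a polynomial with vanishing linear coefficient (the case $a=0$ in the notation of Section~\ref{GeometryOfCompact}), which was excluded there.
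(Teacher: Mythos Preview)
Your proof is correct and follows essentially the same approach as the paper: use the hypothesis $3a_{1}=a_{2}^{2}$ to force $3\mid a_{2}$, complete the cube over $\mathbb{Z}$ via the integral translation $u_{i}\mapsto u_{i}+a_{2}/3$, and reduce to the sum-of-three-cubes theorem \cite[Thm~4.1a]{CTW12}. The only cosmetic difference is that the paper splits into the cases $a_{2}=0$ and $a_{2}\neq 0$, whereas you handle both at once by writing $a_{2}=3c$.
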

\begin{proof}
Suppose $a_{2}=0$, as $3a_{1}-a_{2}^2=0$ then $a_{1}=0$, hence we can write $\mathcal{U}_{n}$ as 
\begin{equation}\label{WCLTCASE1}
\mathcal{U}_{n}:u_{1}^3+u_{2}^3+u_{3}^{3}=n-3a_{0}.
\end{equation} If $a_{2}\neq 0$, $3$ divides $a_{2}$ and the change of variables $u_{i}\rightarrow u_{i}-\frac{a_{2}}{3}$ for $i=1,2,3$ defines an isomorphism $\mathcal{U}_{n}\rightarrow \mathcal{U}^{\prime}_{n}$ over $\mathbb{Z}$, where
\begin{equation}\label{WCLTCASE2}
    \mathcal{U}^{\prime}_{n}:u_{1}^{3}+u_{2}^{3}+u_{3}^{3}=n-2a_{2}-a_{2}a_{1}+3a_{0}.
\end{equation} Then by \cite[Thm 4.1a]{CTW12} (\ref{WCLTCASE1}) and (\ref{WCLTCASE2}) never have an integral Brauer-Manin obstruction for any choice of $n\in \mathbb{Z}$.
\end{proof}
\begin{proof}[\textbf{Proof of Theorem \ref{BIGTHM}}]
Let $f(u)=u^3+a_{2}u^2+a_{1}u+a_{0} \in \mathbb{Z}[u]$ and consider the affine cubic surface over $\mathbb{Z}$ $$
    \mathcal{}{U}_{n}:f(u_{1})+f(u_{2})+f(u_{3})=n
$$for some $n\in \mathbb{Z}$. By Theorem \ref{WCLTCASE} we may assume $3a_{1}-a_{2}^2\neq 0$. Denote $U_{n}:=\mathcal{U}_{n}\times_{\mathbb{Z}} \mathbb{Q}$, apply the change of variables $u_{i}\rightarrow \frac{u_{i}-a_{2}}{3}$  and clearing denominators, we can write $U_{n}$ as
\begin{align}\label{ChangeOfVar3}
U_{n}:u_{1}^{3}+u_{2}^{3}+u_{3}^{3}+a(u_{1}+u_{2}+u_{3})=27n-3(2a_{2}^3-9a_{1}a_{2}+27a_{0})
\end{align} where $a:=9a_{1}a_{3}-3a_{2}^2\in \mathbb{Z}\backslash\{0\}$. Then the statement follows from Theorem \ref{GeneralCaseThm} and Corollary \ref{AllbutfinTrivialBrauer}.
\end{proof}
\section{Sum of three tetrahedral numbers}\label{Examples}
\subsection{Sum of three tetrahedral numbers}\label{SumOfThreeTetSection}
Zhi-Wei Sun asks in \cite{325659} if any integer $n$ can be represented as the sum of three tetrahedral numbers. We show that for any choice of $n$, such an equation is everywhere locally soluble and there is no integral Brauer-Manin obstruction. Throughout Section \ref{SumOfThreeTetSection} $k$ is a field of characteristic not equal to 2 or 3 such that $3$ is a square in $k$. Let $F(x_{i},x_{0})=x_{i}(x_{i}+x_{0})(x_{i}+2x_{0})\in k[x_{i},x_{0}]$ and $p_{0},p_{1},p_{2},p_{3}$ be the geometric points on $\mathbb{P}^{3}_{k}$
\begin{align*}
    p_{0}&: x_{1} = x_{2} = x_{3} = \left(-1 +\frac{\sqrt{3}}{3}\right)x_{0}, &&p_{2}: x_{1}=x_{2}=\left(-1 +\frac{\sqrt{3}}{3}\right)x_{0}, x_{3} =\left(-1 -\frac{\sqrt{3}}{3}\right)x_{0},\\
    p_{1}&: x_{1} = x_{2} = x_{3} = \left(-1 -\frac{\sqrt{3}}{3}\right)x_{0},
    \ &&p_{3}: x_{1}=x_{2}=\left(-1 -\frac{\sqrt{3}}{3}\right)x_{0}, x_{3} =\left(-1 +\frac{\sqrt{3}}{3}\right)x_{0}.
\end{align*} To begin with, we consider the family of cubic surfaces $$
 X_{n}: F(x_{1},x_{0})+F(x_{2},x_{0})+F(x_{3},x_{0})-6nx_{0}^3 \subset \mathbb{P}^3_{\mathbb{Q}}
$$
 where $n\in \mathbb{Q}$. 
\begin{lemma}\label{SmoothSumOfTetLemma} Denote by $X$ the surface $$
X:F(x_{0},x_{1})+F(x_{0},x_{2})+F(x_{0},x_{3})=6nx_{0}^3
$$ defined over $k$. If $X$ has a singular point then either $n =\frac{\sqrt{3}}{9},\frac{-\sqrt{3}}{9}, \frac{\sqrt{3}}{27}$ or $\frac{-\sqrt{3}}{27}$.
\end{lemma}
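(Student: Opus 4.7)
The plan is to apply the Jacobian criterion directly to the homogeneous cubic
\[
G := F(x_0,x_1)+F(x_0,x_2)+F(x_0,x_3)-6n x_0^3,
\]
where $F(x_0,x_i) = x_i^3 + 3x_0 x_i^2 + 2 x_0^2 x_i$. First I would compute
\[
\frac{\partial G}{\partial x_i} = 3x_i^2 + 6x_0 x_i + 2x_0^2 \qquad (i=1,2,3),
\]
and rule out singular points with $x_0=0$, since in that case the vanishing of these partials would force $x_1=x_2=x_3=0$, which is not a projective point. Setting $x_0=1$, the quadratic $3y^2+6y+2=0$ has precisely the two roots
\[
\alpha = -1 + \tfrac{\sqrt{3}}{3}, \qquad \beta = -1 - \tfrac{\sqrt{3}}{3},
\]
so the vanishing of $\partial G/\partial x_i$ at a singular point forces each $x_i \in \{\alpha,\beta\}$. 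Since $3$ is a square in $k$, such points are $k$-rational.

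Next I would invoke Euler's relation $\sum_{i=0}^{3} x_i \,\partial G/\partial x_i = 3G$ for the degree-$3$ form $G$. Given that $\partial G/\partial x_i = 0$ for $i=1,2,3$ and $G(p)=0$ (the point lies on $X_n$), one immediately deduces $x_0\,\partial G/\partial x_0 = 0$, and since $x_0\neq 0$, the fourth Jacobian condition is automatic. So the only remaining condition is $G(p) = 0$, which fixes $n$ in terms of the tuple $(x_1,x_2,x_3)$.

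Finally I would evaluate $n = \tfrac{1}{6}\bigl(F(1,x_1)+F(1,x_2)+F(1,x_3)\bigr)$ case-by-case. A one-line computation shows $F(1,\alpha)=\alpha(\alpha+1)(\alpha+2) = -\tfrac{2\sqrt{3}}{9}$ and symmetrically $F(1,\beta)= \tfrac{2\sqrt{3}}{9}$. Up to permutations of $(x_1,x_2,x_3)$, there are exactly four possibilities according to how many of the $x_i$ equal $\alpha$, yielding
\[
n \in \Bigl\{-\tfrac{\sqrt{3}}{9},\ -\tfrac{\sqrt{3}}{27},\ \tfrac{\sqrt{3}}{27},\ \tfrac{\sqrt{3}}{9}\Bigr\},
\]
which are exactly the four values listed in the statement (and these correspond to the claimed points $p_0, p_2, p_3, p_1$ respectively). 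There is no real obstacle: the argument is an elementary application of the Jacobian criterion combined with the Euler relation, and the only computational content is evaluating $F(1,\alpha)$ and $F(1,\beta)$, which is a short exercise.
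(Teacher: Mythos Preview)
Your proof is correct and follows essentially the same approach as the paper: apply the Jacobian criterion, rule out $x_0=0$, solve the quadratics $\partial G/\partial x_i=0$ to pin down the candidate points (the $p_j$), and then determine $n$ from the remaining condition. The only difference is organizational: the paper sets $\partial G/\partial x_0=0$ directly to read off $n$, whereas you invoke Euler's identity to note this condition is automatic and instead extract $n$ from $G(p)=0$ via the factored form $F(1,y)=y(y+1)(y+2)$ --- a slightly cleaner but equivalent computation.
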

\begin{proof}
Let $G(x_{0},x_{1},x_{2},x_{3}):=F(x_{0},x_{1})+F(x_{0},x_{2})+F(x_{0},x_{3})-6nx_{0}^3$. Assume there is a non-smooth point $p = [x_{0}:x_{1}:x_{2}:x_{3}]$ on $X_{n}$, then the partial derivatives of $G$ will vanish at $p$ i.e.\ $$\frac{\partial G}{\partial x_{i}}(p) = 2x_{0}^2 + 6x_{0}x_{i} + 3x_{i}^2=0, \frac{\partial G}{\partial x_{0}}(p) = -18nx_{0}^2 + 4x_{0}x_{1} + 4x_{0}x_{2} + 4x_{0}x_{3} +3x_{1}^2+ 3x_{2}^2+ 3x_{3}^2=0
$$ for $i=1,2,3$. We can see if $x_{0}=0$ for such a point $p$ this would imply $x_{i}=0$ for $i=1,2,3$ which cannot happen, hence we can assume $x_{0}\neq 0$. Using the symmetry of the defining equation for $X$ and the fact $\frac{\partial G}{\partial x_{i}}(p)=0$ we can assume that $p$ is either $p_{0},p_{1},p_{2}$ or $p_{3}$. As $\frac{\partial G}{\partial x_{0}}(p)=0$ this implies $n =\frac{\sqrt{3}}{9},\frac{-\sqrt{3}}{9}, \frac{\sqrt{3}}{27}$ or $\frac{-\sqrt{3}}{27}$.
\end{proof}
\begin{proposition}\label{SmootnessOfTet}
$X_{n}$ is smooth for all $n \in \mathbb{Q}$. 
\end{proposition}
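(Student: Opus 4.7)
My plan is to deduce Proposition \ref{SmootnessOfTet} directly from Lemma \ref{SmoothSumOfTetLemma}, by base-changing to an algebraically closed field and observing that the exceptional values of $n$ pinned down by that lemma are all irrational.

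First I would reduce smoothness of $X_n$ over $\mathbb{Q}$ to geometric smoothness. Since $X_n$ is of finite type over $\mathbb{Q}$, it is smooth over $\mathbb{Q}$ if and only if the base change $X_n\times_{\mathbb{Q}} \bar{\mathbb{Q}}$ is smooth over $\bar{\mathbb{Q}}$ (equivalently, regular). The field $\bar{\mathbb{Q}}$ has characteristic zero and contains $\sqrt{3}$, so it satisfies the standing hypotheses of Section \ref{SumOfThreeTetSection}; consequently Lemma \ref{SmoothSumOfTetLemma} applies to $X_n\times \bar{\mathbb{Q}}$ with $k = \bar{\mathbb{Q}}$.

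The lemma then asserts that the only values of the parameter for which the geometric fibre can fail to be smooth lie in the set $\{\pm\sqrt{3}/9,\ \pm\sqrt{3}/27\}$. Each of these four elements belongs to $\mathbb{Q}(\sqrt{3})\setminus \mathbb{Q}$, since $\sqrt{3}$ is irrational. Hence no $n \in \mathbb{Q}$ falls into the singular locus of the family, and $X_n$ is smooth over $\mathbb{Q}$ for every rational $n$.

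There is essentially no obstacle here: the geometric work was already done in Lemma \ref{SmoothSumOfTetLemma} (where the Jacobian criterion, combined with the $S_3$-symmetry in $x_1,x_2,x_3$, pinned the only candidate singular points down to $p_0,\ldots,p_3$, and then $\partial G/\partial x_0 = 0$ yielded the explicit quartic constraint on $n$). The present proposition reduces to the arithmetic observation that each of the four roots of this constraint is irrational, so Proposition \ref{SmootnessOfTet} is an immediate corollary.
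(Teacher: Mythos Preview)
Your proof is correct and matches the paper's own argument, which simply states that the proposition follows from Lemma~\ref{SmoothSumOfTetLemma}. Your added remark about base-changing to $\bar{\mathbb{Q}}$ so that the hypothesis ``$3$ is a square in $k$'' is satisfied is a helpful clarification of a detail the paper leaves implicit.
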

\begin{proof}
This follows from Lemma \ref{SmoothSumOfTetLemma}.
\end{proof}
\subsection{Finding $\mathbb{Z}_p$ points on $\mathcal{U}_{n}$}\label{PadicTet} Throughout Section \ref{PadicTet}, $p$ will be a prime and $\mathbb{F}_{q}$ a finite field of size $q=p^{k}$ for some integer $k$. Denote by $\mathcal{X}_{n}$ the integral model of $X_{n}$ defined by the same equation over $\mathbb{Z}_{p}$ and denote by $\mathcal{X}_{n,p}$ the special fibre of $\mathcal{X}_{n}$. If the special fibre is smooth we say $\mathcal{X}_{n}$ has \emph{good reduction} at $p$, otherwise we say $\mathcal{X}_{n}$ has \emph{bad reduction} at $p$. 
\begin{theorem}[Weil, {\cite[Thm 27.1]{Man86}}]\label{WeilPoints}
Let $X$ be a smooth surface over $\mathbb{F}_{q}$, if $\bar{X}$ is rational, then $$
\#X(\mathbb{F}_{q})=q^2+\Tr(\phi^{*})q+1
$$ where $\Tr\phi^{*}$ denotes the trace of the Frobenius automorphism $\phi$ in the representation of $\Gal(\bar{\mathbb{F}}_{q}/\mathbb{F}_{q})$ on $\Pic(\bar{V})$.
\end{theorem}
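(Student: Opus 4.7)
The plan is to apply the Grothendieck--Lefschetz trace formula to $\bar X$ and exploit the rationality hypothesis to reduce the sole non-trivial contribution to the action of Frobenius on $\Pic\bar X$.

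Fix a prime $\ell$ coprime to $q$. The trace formula yields
$$
    \#X(\mathbb{F}_{q})=\sum_{i=0}^{4}(-1)^{i}\Tr\bigl(\phi^{*}\mid \HH^{i}_{\et}(\bar X,\mathbb{Q}_{\ell})\bigr).
$$
Since the odd $\ell$-adic Betti numbers are birational invariants of smooth projective surfaces and vanish for $\mathbb{P}^{2}$, the groups $\HH^{1}_{\et}$ and $\HH^{3}_{\et}$ vanish for every smooth projective rational surface, hence for $\bar X$. Moreover $\HH^{0}_{\et}(\bar X,\mathbb{Q}_{\ell})=\mathbb{Q}_{\ell}$ carries trivial Frobenius and contributes $1$, while $\HH^{4}_{\et}(\bar X,\mathbb{Q}_{\ell})\cong \mathbb{Q}_{\ell}(-2)$ contributes $q^{2}$ by Poincar\'e duality.

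The key step is the computation of the $\HH^{2}$-contribution. For a smooth projective geometrically rational surface, the Galois-equivariant cycle class map induces an isomorphism
$$
    \Pic(\bar X)\otimes_{\mathbb{Z}}\mathbb{Q}_{\ell}\xrightarrow{\ \sim\ }\HH^{2}_{\et}(\bar X,\mathbb{Q}_{\ell}(1)),
$$
which can be verified by writing $\bar X$ as an iterated blow-up of $\mathbb{P}^{2}$ (or of a Hirzebruch surface) and checking that both sides transform compatibly under blow-ups. Untwisting by one Tate twist shows that Frobenius on $\HH^{2}_{\et}(\bar X,\mathbb{Q}_{\ell})$ acts as $q$ times its action on $\Pic\bar X\otimes\mathbb{Q}_{\ell}$, so $\Tr(\phi^{*}\mid \HH^{2}_{\et})=q\cdot\Tr(\phi^{*}\mid\Pic\bar X)$, and summing the four contributions produces the claimed formula.

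The main obstacle is rigorously establishing the cycle-class isomorphism, equivalently the vanishing of the transcendental part of $\HH^{2}$ for a rational surface. A self-contained alternative avoids $\ell$-adic machinery entirely and proceeds by induction on the number of blow-ups needed to obtain $\bar X$ from a minimal rational surface, starting from the direct counts for $\mathbb{P}^{2}$ and Hirzebruch surfaces, and checking that blowing up a closed point of residue field degree $d$ over $\mathbb{F}_{q}$ changes $\#(\cdot)(\mathbb{F}_{q})$ by $q$ if $d=1$ and by $0$ otherwise, matching exactly the change induced on $q\cdot\Tr(\phi^{*}\mid\Pic\bar X)$ by adjoining a Galois orbit of $d$ exceptional divisors cyclically permuted by Frobenius.
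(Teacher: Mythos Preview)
The paper does not supply a proof of this theorem at all: it is quoted verbatim as \cite[Thm 27.1]{Man86} and used as a black box in the point-counting Lemma~\ref{Goodprimes}. There is therefore no proof in the paper to compare your argument against.

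That said, your sketch is a correct and standard derivation. The Grothendieck--Lefschetz trace formula combined with the vanishing of $\HH^{1}_{\et}$ and $\HH^{3}_{\et}$ for a smooth projective rational surface, together with the Galois-equivariant isomorphism $\Pic(\bar X)\otimes\mathbb{Q}_{\ell}\cong \HH^{2}_{\et}(\bar X,\mathbb{Q}_{\ell}(1))$, is exactly how this result is usually proved (and is essentially the argument Manin gives). Your alternative inductive argument via blow-ups of minimal rational surfaces is also valid and has the virtue of being elementary, though it requires knowing that every smooth projective rational surface over $\bar{\mathbb{F}}_{q}$ is obtained from $\mathbb{P}^{2}$ or a Hirzebruch surface by a sequence of blow-ups, and that this sequence can be arranged $\Gal(\bar{\mathbb{F}}_{q}/\mathbb{F}_{q})$-equivariantly so that the induction descends to $\mathbb{F}_{q}$; you should make that descent step explicit if you pursue that route.
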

\begin{remark}
For a smooth cubic surface $X$ over a field $k$ we have that $\Pic \bar{X}\cong \mathbb{Z}^{7}$. If $k=\mathbb{F}_{q}$ it is then clear that $|\Tr(\phi^{*})|\leq 7$. Thus, for a smooth cubic surface $X$ over $\mathbb{F}_{q}$ we have a lower bound $\#X(\mathbb{F}_{q})\geq 1-7q+q^2$.
\end{remark}
\begin{theorem}[Hasse, {\cite[Thm V.1.1]{SIL09}}]\label{HasseBound}
Let $E$ be an elliptic curve over $\mathbb{F}_{q}$ then $$
\#E(\mathbb{F}_{q})\leq 2\sqrt{q}+q+1.
$$
\end{theorem}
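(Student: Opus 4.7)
The plan is to follow the classical proof of Hasse's bound via the Frobenius endomorphism and the positive definiteness of the degree form on $\End(E)$. First I would let $\phi \colon E \to E$ denote the $q$-power Frobenius isogeny. The key observation is that $E(\FF_q)$ is exactly the fixed locus of $\phi$ acting on $E(\bar\FF_q)$, so $E(\FF_q) = \ker(\phi - 1)$. Since the differential of $\phi - 1$ on the tangent space is $-1$ (the Frobenius being purely inseparable, it kills differentials), the isogeny $\phi - 1$ is separable, and therefore $\#E(\FF_q) = \deg(\phi - 1)$.

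Next I would bring in the fact that the degree function $\deg \colon \End(E) \to \ZZ$ extends to a positive definite quadratic form on $\End(E) \otimes \RR$. Positive definiteness comes from the fact that every nonzero isogeny has finite nonempty kernel and is surjective, so $\deg(f) > 0$ for $f \neq 0$. The associated symmetric bilinear pairing
$$
\langle f, g \rangle \;=\; \tfrac{1}{2}\bigl(\deg(f+g) - \deg(f) - \deg(g)\bigr)
$$
then satisfies the Cauchy--Schwarz inequality $\langle f, g \rangle^2 \leq \deg(f)\deg(g)$.

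Applying this with $f = \phi$ and $g = -1$, and using $\deg(\phi) = q$, $\deg(-1) = 1$, gives
$$
\bigl|\deg(\phi - 1) - q - 1\bigr| \;\leq\; 2\sqrt{q},
$$
that is, $|\#E(\FF_q) - q - 1| \leq 2\sqrt{q}$, which immediately yields the stated upper bound $\#E(\FF_q) \leq q + 1 + 2\sqrt{q}$.

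The main obstacle is the positive definiteness of $\deg$ on $\End(E) \otimes \RR$; that is the non-formal input, whereas the parallelogram-law / Cauchy--Schwarz step is purely algebraic once one knows $\deg$ is a quadratic form (i.e.\ that $\deg(f+g) + \deg(f-g) = 2\deg(f) + 2\deg(g)$, which in turn is what makes the whole argument work). Since the theorem is only used as an input cited from Silverman, in the paper itself I would simply invoke \cite[Thm V.1.1]{SIL09} rather than reprove it, but the sketch above is the route I would take if forced to give a self-contained argument.
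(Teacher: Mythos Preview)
Your sketch is correct and is essentially the standard argument given in the cited reference. The paper itself provides no proof of this statement: it is quoted verbatim from \cite[Thm V.1.1]{SIL09} and used as a black box, exactly as you anticipate in your final paragraph.
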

\begin{lemma}\label{pdividesn} For all primes $p$ which divide $6n$, $\mathcal{X}_{n,p}$ has a smooth $\mathbb{F}_{p}$-point.
\end{lemma}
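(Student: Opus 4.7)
The plan is to prove the lemma by exhibiting an explicit $\mathbb{F}_{p}$-rational point on $\mathcal{X}_{n,p}$ for each prime $p \mid 6n$ and then verifying smoothness via the Jacobian criterion. Writing $G(x_{0},x_{1},x_{2},x_{3}) := F(x_{0},x_{1})+F(x_{0},x_{2})+F(x_{0},x_{3})-6nx_{0}^{3}$, I will use the fact that the partial derivatives take the particularly simple shape
\[
    \frac{\partial G}{\partial x_{i}} \;=\; 3x_{i}^{2}+6x_{i}x_{0}+2x_{0}^{2} \qquad (i=1,2,3),
\]
so that at any point with $x_{0}=1$ and $x_{i}=0$ one obtains $\partial G/\partial x_{i} = 2$, which is a unit modulo every odd prime.

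I would split the proof into three cases according to the residue characteristic. For a prime $p$ with $p \mid n$ and $p \notin \{2,3\}$, the point $P = [1:0:0:0]$ lies on $\mathcal{X}_{n,p}$ because $G(1,0,0,0) = -6n \equiv 0 \pmod{p}$, and $\partial G/\partial x_{1}(P) = 2 \not\equiv 0 \pmod{p}$, so $P$ is smooth by the Jacobian criterion. For $p=3$, the same point $P=[1:0:0:0]$ again satisfies $G(P) = -6n \equiv 0 \pmod{3}$ (since $3 \mid 6$), and $\partial G/\partial x_{1}(P) = 2 \not\equiv 0 \pmod{3}$.

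For the one remaining case $p=2$, the point $[1:0:0:0]$ fails to be smooth because the relevant partial derivatives vanish in characteristic $2$, so I must choose a different candidate. I will take $Q = [1:1:0:0]$. Since $F(1,1) = 1 \cdot 2 \cdot 3 = 6$, we have $G(Q) = 6 - 6n = 6(1-n) \equiv 0 \pmod{2}$, so $Q \in \mathcal{X}_{n,2}(\mathbb{F}_{2})$. Computing $\partial G/\partial x_{1}(Q) = 3+6+2 = 11 \equiv 1 \pmod{2}$ shows that $Q$ is smooth.

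There is no real obstacle here; the statement reduces to an explicit residue calculation at finitely many small points, and the only genuine subtlety is that the natural candidate $[1:0:0:0]$ fails in characteristic $2$ because all the partial derivatives $\partial G/\partial x_{i}$ reduce to $2x_{0}^{2}$, which vanishes modulo $2$. This is cleanly handled by the alternative point $[1:1:0:0]$, exploiting that $F(1,1)=6$ is itself divisible by $2$.
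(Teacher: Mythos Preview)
Your proof is correct and follows the same basic approach as the paper (exhibit an explicit $\mathbb{F}_{p}$-point and verify smoothness via the Jacobian criterion), but the paper makes a cleaner choice of point that eliminates your case split. The paper uses the single point $[1:-1:0:0]$ for \emph{all} primes $p\mid 6n$: since $F(1,-1)=(-1)\cdot 0\cdot 1=0$ and $F(1,0)=0$, one has $G(1,-1,0,0)=-6n\equiv 0\pmod{p}$, and crucially
\[
\frac{\partial G}{\partial x_{1}}(1,-1,0,0)=3(-1)^{2}+6(-1)(1)+2(1)^{2}=-1,
\]
which is a unit in every residue characteristic, including $2$ and $3$. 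Your argument works, but the split into three cases (and the observation that $[1:0:0:0]$ fails at $p=2$) is unnecessary once one notices that a point with $x_{1}=-1$ gives $\partial G/\partial x_{1}=-1$ uniformly.
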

\begin{proof}
Consider the defining equation for $\mathcal{X}_{n,p}$
$$
    G(x_{0},x_{1},x_{2},x_{3}) = F(x_{0},x_{1})+F(x_{0},x_{2})+F(x_{0},x_{3})\equiv0 \ (\text{mod} \ p).
$$
The point $[x_{0}:x_{1}:x_{2}:x_{3}]=[1:-1:0:0]$ is a smooth $\mathbb{F}_{p}$-point.
\end{proof}
\begin{lemma}\label{CaseByCasensingular}
Let $p>3$ and $p\equiv 2$ mod $3$ then if
\begin{enumerate}
    \item $n \equiv \frac{-\sqrt{3}}{9}$ mod $p$ the point $[1:-1 +\frac{\sqrt{3}}{3}:-1 +\frac{\sqrt{3}}{3}:-1 -\frac{2\sqrt{3}}{3}]\in \mathcal{X}_{n,p}(\mathbb{F}_{p})$ is smooth, 
    \item $n \equiv \frac{\sqrt{3}}{9}$ mod $p$ the point $[1:-1 -\frac{\sqrt{3}}{3}:-1 -\frac{\sqrt{3}}{3}:-1 +\frac{2\sqrt{3}}{3}]\in \mathcal{X}_{n,p}(\mathbb{F}_{p})$ is smooth, 
    \item $n \equiv \frac{\sqrt{3}}{27}$ mod $p$ the point $[1:-1 -\frac{\sqrt{3}}{3}:-1 +\frac{\sqrt{3}}{3}:-1 +\frac{2\sqrt{3}}{3}]\in \mathcal{X}_{n,p}(\mathbb{F}_{p})$ is smooth, 
    \item $n \equiv \frac{-\sqrt{3}}{27}$ mod $p$ the point $[1:-1-\frac{\sqrt{3}}{{3}}:-1+\frac{\sqrt{3}}{3}:-1-\frac{2\sqrt{3}}{3}] \in \mathcal{X}_{n,p}(\mathbb{F}_{p})$ is smooth.
\end{enumerate}
\end{lemma}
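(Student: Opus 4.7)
The plan is to verify, for each of the four cases, that (i) the listed point lies on the reduction $\mathcal{X}_{n,p}$ and (ii) it is a smooth point. The condition that $\sqrt{3}$ make sense in $\mathbb{F}_p$ is what forces the hypothesis on $p$; beyond this the argument is a direct substitution combined with a quick reuse of the singularity analysis in Lemma \ref{SmoothSumOfTetLemma}.

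For the smoothness part, I would read off from the proof of Lemma \ref{SmoothSumOfTetLemma} that the vanishing of $\partial G/\partial x_i = 3x_i^2 + 6x_0 x_i + 2x_0^2$ with $x_0 \neq 0$ forces $x_i/x_0 \in \{-1 + \tfrac{\sqrt{3}}{3},\, -1 - \tfrac{\sqrt{3}}{3}\}$, these being the two roots of $3t^2+6t+2$. In each of the four proposed points the coordinate $x_3/x_0$ is $-1 \pm \tfrac{2\sqrt{3}}{3}$, which over $\mathbb{F}_p$ (with $p > 3$) is distinct from $-1 \pm \tfrac{\sqrt{3}}{3}$ since $\tfrac{\sqrt{3}}{3} \neq \pm \tfrac{2\sqrt{3}}{3}$. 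Consequently $\partial G/\partial x_3$ does not vanish at any of the four points, so they are automatically smooth on $\mathcal{X}_{n,p}$.

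For the on-surface check, set $u := \sqrt{3}/3$, so $u^2 = 1/3$, and compute
\[
F(1,-1+u) = (-1+u)(u)(1+u) = u(u^2-1) = -\tfrac{2u}{3} = -\tfrac{2\sqrt{3}}{9},
\]
\[
F(1,-1+2u) = (-1+2u)(2u)(1+2u) = 2u(4u^2-1) = \tfrac{2u}{3} = \tfrac{2\sqrt{3}}{9},
\]
and the two corresponding values with $u$ replaced by $-u$ are their negatives. Summing the three $F$-values for the point in case (1) yields $-\tfrac{2\sqrt{3}}{3}$, matching $6n$ exactly when $n \equiv -\tfrac{\sqrt{3}}{9} \pmod p$; cases (2)--(4) are handled by the same two basic values of $F$ with appropriate sign combinations, giving the $n$-values $\tfrac{\sqrt{3}}{9}$, $\tfrac{\sqrt{3}}{27}$, $-\tfrac{\sqrt{3}}{27}$ respectively.

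There is no real obstacle here; the only mild subtlety is bookkeeping the four sign patterns. The symmetry $u \leftrightarrow -u$ pairs cases (1) with (2) and (3) with (4), so at most two genuine computations are required and the remaining two follow by swapping signs throughout. Once both checks are carried out, the conclusion of the lemma is immediate.
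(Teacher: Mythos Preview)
Your proof is correct and follows essentially the same route as the paper: both establish smoothness by showing that the partial derivative with respect to $x_3$ is nonzero at each point (the paper computes it directly as $3$, while you argue that $-1\pm\tfrac{2\sqrt{3}}{3}$ is not a root of $3t^2+6t+2$; note the paper writes $\partial F/\partial x_1$, which appears to be a typo for $x_3$, since $\partial G/\partial x_1$ actually vanishes at these points). Your version is more thorough in that you also verify explicitly that each point lies on $\mathcal{X}_{n,p}$, a check the paper leaves implicit.
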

\begin{proof}
Denote by $F$ the defining equation for $\mathcal{X}_{n,p}$, then $\frac{\partial F}{\partial x_{1}}\equiv 3$ mod $p$ for each point.
\end{proof}
\begin{lemma} \label{BadPrimesWhichDontDividen} 
Let $p>3$ and $p$ be a prime of bad reduction. Then there exists a smooth $\mathbb{F}_p$-point on $\mathcal{X}_{n,p}$.
\end{lemma}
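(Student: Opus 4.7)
My plan is to reduce the general statement to the explicit verification already carried out in Lemma \ref{CaseByCasensingular}. The strategy is to show that any prime $p>3$ of bad reduction automatically lies in one of the four cases listed there, after which the required smooth $\mathbb{F}_p$-point is simply read off from that lemma.

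The first step is to port the Jacobian computation from the proof of Lemma \ref{SmoothSumOfTetLemma} to the algebraic closure $\overline{\mathbb{F}}_p$ (which of course contains $\sqrt{3}$, since $p>3$). Because that argument only uses inversion of $2$ and $3$, it carries over verbatim: any singular point of $\mathcal{X}_{n,p}$ over $\overline{\mathbb{F}}_p$ must have $x_0\ne 0$ and, by the vanishing of $\partial G/\partial x_i$ for $i=1,2,3$, must coincide with one of the four points $p_0,p_1,p_2,p_3$ (reduced modulo $p$). The vanishing of $\partial G/\partial x_0$ at such a point then forces $n\bmod p$ to equal one of $\pm\sqrt{3}/9$ or $\pm\sqrt{3}/27$ in $\overline{\mathbb{F}}_p$.

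Now the key observation: since $n\in\mathbb{F}_p$ and the integers $9, 27$ are units modulo $p>3$, each of these four equalities forces $\sqrt{3}\in\mathbb{F}_p$, i.e.\ $3$ is a square modulo $p$. This is precisely the residue hypothesis of Lemma \ref{CaseByCasensingular}, so that lemma applies in the appropriate case and produces an $\mathbb{F}_p$-point of $\mathcal{X}_{n,p}$ at which $\partial F/\partial x_1\equiv 3\not\equiv 0\pmod p$, hence smooth.

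The main (and really only) obstacle is to confirm that the Jacobian classification from Lemma \ref{SmoothSumOfTetLemma} is exhaustive in positive characteristic $p>3$, so that bad reduction cannot occur for some reason specific to $\mathbb{F}_p$ that bypasses the four tabulated values of $n$. Inspecting the proof of Lemma \ref{SmoothSumOfTetLemma}, the only places where the characteristic enters are the inversions of $2$ and $3$, both of which remain valid for $p>3$; no new cases arise, so the reduction to Lemma \ref{CaseByCasensingular} is complete.
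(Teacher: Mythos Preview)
Your proposal is correct and follows essentially the same route as the paper: reduce the Jacobian computation of Lemma~\ref{SmoothSumOfTetLemma} modulo $p$ (valid since only $2$ and $3$ need to be inverted), deduce that bad reduction forces $3$ to be a square in $\mathbb{F}_p$ and $n$ to take one of the four listed values, then invoke Lemma~\ref{CaseByCasensingular}. Your write-up is in fact more careful than the paper's about why $\sqrt{3}\in\mathbb{F}_p$ (namely because $n\in\mathbb{F}_p$ and $9,27$ are units), but the argument is the same.
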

\begin{proof}
Using Lemma \ref{SmoothSumOfTetLemma} we see that for $p$ to be a bad prime $3$ needs to be square mod $p$ this is equivalent to $p\equiv 2$ mod $3$ and $n$ mod $p$ has four possible forms which are dependent on $\sqrt{3}$ mod $p$, namely $n\equiv \frac{-\sqrt{3}}{9}, \frac{\sqrt{3}}{9}, \frac{\sqrt{3}}{27}$ or $ \frac{-\sqrt{3}}{27}$ mod $p$. By Lemma \ref{CaseByCasensingular} in each case there exists a smooth $\mathbb{F}_{p}$ point on $\mathcal{X}_{n,p}$ by Lemma \ref{CaseByCasensingular}.
\end{proof}
\begin{lemma}\label{Goodprimes}
Let $p\geq 11$ and a prime of good reduction. Then $\mathcal{X}_{n,p}$ has a smooth $\mathbb{F}_{p}$-point which does not lie on the prime divisor $x_{0}=0$.
\end{lemma}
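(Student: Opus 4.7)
The plan is a point-counting argument. I would apply Weil's formula (Theorem \ref{WeilPoints}) to bound $\#\mathcal{X}_{n,p}(\mathbb{F}_p)$ from below, bound the number of $\mathbb{F}_p$-points on the hyperplane section $\{x_0 = 0\}$ from above using the Hasse bound (Theorem \ref{HasseBound}), and verify that for $p \geq 11$ the difference is strictly positive, which forces a point off the hyperplane to exist.

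First I would observe that since $p$ is a prime of good reduction, $\mathcal{X}_{n,p}$ is a smooth cubic surface over $\mathbb{F}_p$, hence geometrically rational. Theorem \ref{WeilPoints} then yields $\#\mathcal{X}_{n,p}(\mathbb{F}_p) = p^2 + p\,\Tr(\phi^*) + 1$. The Picard group $\Pic \bar{\mathcal{X}}_{n,p}$ has rank $7$, and as noted in Remark \ref{GaloisActionRem} the Frobenius action on it factors through a finite subgroup of $W(E_6)$; the eigenvalues of $\phi^*$ are therefore roots of unity, which gives $|\Tr(\phi^*)| \leq 7$ and hence $\#\mathcal{X}_{n,p}(\mathbb{F}_p) \geq p^2 - 7p + 1$.

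Next I would examine the hyperplane section $D := \mathcal{X}_{n,p} \cap \{x_0 = 0\}$. Setting $x_0 = 0$ in the defining equation of $X_n$ yields the Fermat cubic $x_1^3 + x_2^3 + x_3^3 = 0$ in $\mathbb{P}^2_{\mathbb{F}_p}$, which for $p > 3$ is a smooth plane cubic with the rational point $[1:-1:0]$, so $D$ is an elliptic curve over $\mathbb{F}_p$ and Theorem \ref{HasseBound} gives $\#D(\mathbb{F}_p) \leq p + 1 + 2\sqrt{p}$.

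Combining the two estimates shows that at least $p^2 - 8p - 2\sqrt{p}$ of the $\mathbb{F}_p$-points of $\mathcal{X}_{n,p}$ lie off $\{x_0 = 0\}$; at $p = 11$ this equals $33 - 2\sqrt{11} > 26$, and the expression is strictly increasing in $p$ for $p \geq 11$, so the bound remains positive. Any such point is automatically smooth on $\mathcal{X}_{n,p}$ by the good reduction hypothesis. The only mildly nontrivial ingredient is the roots-of-unity bound on the Frobenius eigenvalues, which is immediate from the finiteness of $W(E_6)$; calibrating the numerics is what pins down the threshold $p \geq 11$.
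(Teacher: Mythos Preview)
Your proposal is correct and follows essentially the same approach as the paper: the paper also applies Theorem~\ref{WeilPoints} to obtain $\#\mathcal{X}_{n,p}(\mathbb{F}_p)\geq p^2-7p+1$, subtracts the Hasse bound $p+1+2\sqrt{p}$ for the hyperplane section $x_0=0$, and checks positivity for $p\geq 11$. Your write-up is in fact slightly more explicit in justifying $|\Tr(\phi^*)|\leq 7$ via the roots-of-unity argument and in identifying the section as the Fermat cubic, but the argument is the same.
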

\begin{proof} As $p$ is prime of good reduction we can apply Theorem \ref{WeilPoints}
$$
    \#\mathcal{X}_{n,p}(\mathbb{F}_p)=1+\Tr \phi^{*}p+p^2\geq1-7p+p^2.
$$  Denote by $\mathcal{X}_{n,p}'(\mathbb{F}_p)$ the points away from the hypersection defined by $x_{0}=0$, using Theorem \ref{HasseBound}, along the divisor $x_{0}=0$ to get we get that
$$
    \#\mathcal{X}_{n,p}'(\mathbb{F}_p) \geq 1 -7p+p^2-(1+p+2\sqrt{p}).
$$ It is easy to check that for $p\geq 11$, $\mathcal{X}_{n,p}'(\mathbb{F}_p) \geq 1$.
\end{proof} 
\begin{proposition}\label{SmoothFpPointProp}
There exists a smooth $\mathbb{F}_{p}$-point on the special fibre of $\mathcal{X}\rightarrow \Spec \mathbb{Z}_{p}$ which lies away from the divisor $x_{0}=0$.
\end{proposition}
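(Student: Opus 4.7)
The plan is a case analysis on the prime $p$, combining the three preceding lemmas and handling a short list of small primes by direct inspection. Throughout, the task is not merely to exhibit a smooth $\mathbb{F}_p$-point on $\mathcal{X}_{n,p}$, but one lying off the divisor $\{x_0 = 0\}$.

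If $p \mid 6n$, Lemma \ref{pdividesn} produces the smooth point $[1:-1:0:0]$, whose first coordinate is a unit and hence lies off $\{x_0 = 0\}$; this covers $p \in \{2, 3\}$ automatically. If $p > 3$ is a prime of bad reduction, Lemma \ref{BadPrimesWhichDontDividen} applies, and inspection of the four subcases listed in Lemma \ref{CaseByCasensingular} shows that each exhibited point is projectively of the form $[1 : \ast : \ast : \ast]$, so again lies off $\{x_0 = 0\}$. For $p \geq 11$ of good reduction, Lemma \ref{Goodprimes} is precisely the required statement.

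This leaves the primes $p \in \{5, 7\}$ of good reduction with $p \nmid n$ (noting that $3$ is a non-square in both $\mathbb{F}_5$ and $\mathbb{F}_7$, so by Lemma \ref{SmoothSumOfTetLemma} these primes are always of good reduction). Here the Weil estimate $\#\mathcal{X}_{n,p}(\mathbb{F}_p) \geq 1 - 7p + p^2$ underlying Lemma \ref{Goodprimes} is vacuous when $p = 5$ and yields only $\geq 1$ when $p = 7$, which is too weak to subtract off the contribution of the Fermat cubic $\{x_0 = 0\}$. I would close this gap by a finite verification: setting $x_0 = 1$ and writing $g(u) = u(u+1)(u+2) = F(1, u)$, for each residue class of $n$ modulo $5$ and modulo $7$ one exhibits a triple $(u_1, u_2, u_3) \in \mathbb{F}_p^3$ with $g(u_1) + g(u_2) + g(u_3) \equiv 6n \pmod{p}$ such that $3u_i^2 + 6u_i + 2 \not\equiv 0 \pmod{p}$ for some $i$, which guarantees smoothness of the point $[1 : u_1 : u_2 : u_3]$ on $\mathcal{X}_{n,p}$.

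The main obstacle is the small-prime verification: it is not a priori clear that every residue class of $n$ modulo $5$ or $7$ admits such a smooth solution rather than only singular ones. Reassuringly, the quadratic $3u^2 + 6u + 2$ has at most two roots in each $\mathbb{F}_p$, so the singular locus on $\{x_0 = 1\}$ is severely restrictive and a direct enumeration (at most ten cases) should confirm the existence of the desired point in every remaining instance.
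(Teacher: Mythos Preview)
Your proposal is correct and matches the paper's own argument essentially line for line: the paper invokes Lemmas \ref{pdividesn}, \ref{BadPrimesWhichDontDividen}, and \ref{Goodprimes} to cover $p\in\{2,3\}$ and $p\geq 11$, then disposes of the residual cases $p\in\{5,7\}$ with $p\nmid n$ by writing down explicit smooth points $[1:u_1:u_2:u_3]$ for each nonzero residue class of $n$. The only difference is that the paper actually lists those points (e.g.\ $[1:x:-x:0]$ with $x^2\equiv n$ when $n$ is a square), whereas you describe the enumeration without carrying it out; completing that ten-case check would make your proof identical to the paper's.
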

\begin{proof}
For primes $p\geq 11$, $p=2$ or $3$ this follows from Lemmas \ref{pdividesn}, \ref{BadPrimesWhichDontDividen} and \ref{Goodprimes}. The only cases left to consider are $p=5$ or $7$. Let $p=5$. If $n = 1,4$ mod $5$ then there exists a smooth point $[1:x:-x:0]$ where $x^2=n$ mod 5. If $n=2$ mod $5$ then $[1:1:1:0]$ is a smooth point and if $n=3$ mod $5$ then $[1:2:2:0]$ is a smooth point. Let $p=7$. If $n = 1,2,4$ mod $7$ then there exists a smooth point $[1:x:-x:0]$ where $x^2=n$ mod 7. If $n=3$ mod $7$ then $[1:3:0:0]$ is a smooth point, if $n=5$ mod $7$ then $[1:4:4:0]$ is a smooth point and if $n=6$ mod 7 then $[1:3:3:0]$ is a smooth point.
\end{proof}
\begin{theorem}\label{SumOfThreeTetELS}
Let $v$ be a place of $\mathbb{Q}$ then $\mathcal{U}_{n}(\mathbb{Z}_{v})\neq \emptyset$.
\end{theorem}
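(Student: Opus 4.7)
The plan is to split the statement according to the type of place. For the archimedean place $v = \infty$, I would not invoke any of the integral-model machinery at all, since the affine equation is a cubic in each variable separately. Setting $x_{2} = x_{3} = 0$ collapses the defining equation $x_{1}(x_{1}+1)(x_{1}+2) + x_{2}(x_{2}+1)(x_{2}+2) + x_{3}(x_{3}+1)(x_{3}+2) = 6n$ to the cubic $t(t+1)(t+2) = 6n$ in a single variable, whose left-hand side is a real cubic with positive leading coefficient and therefore surjective onto $\mathbb{R}$. Any real root produces a point of $\mathcal{U}_{n}(\mathbb{R})$.

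For a finite place $v = p$, the plan is to combine Proposition \ref{SmoothFpPointProp} with Hensel's lemma applied to the projective integral model $\mathcal{X}$ of $X_{n}$. Proposition \ref{SmoothFpPointProp} supplies a smooth $\mathbb{F}_{p}$-point on the special fibre $\mathcal{X}_{n,p}$ that lies in the affine chart $x_{0} \neq 0$. Hensel's lemma then lifts this smooth point to a $\mathbb{Z}_{p}$-point of $\mathcal{X}$. Because the reduction satisfies $x_{0} \not\equiv 0 \pmod{p}$, the lifted $x_{0}$ is a unit of $\mathbb{Z}_{p}$, so after normalising $x_{0} = 1$ we obtain a genuine $\mathbb{Z}_{p}$-point of the affine scheme $\mathcal{U}_{n}$, not merely of its compactification.

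I do not expect a serious obstacle: the content has already been absorbed into Proposition \ref{SmoothFpPointProp} (which does the case analysis over $\mathbb{F}_{p}$ for primes of good reduction, bad reduction, and primes dividing $6n$, including $p = 2$ and $p = 3$), and the only remaining point is the routine observation that smoothness plus existence of a mod-$p$ lift in the affine chart suffices for Hensel to produce the required $\mathbb{Z}_{p}$-point. The proof should therefore amount to a short paragraph stitching together the archimedean case with one invocation of Proposition \ref{SmoothFpPointProp} and Hensel's lemma.
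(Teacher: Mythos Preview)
Your proposal is correct and follows essentially the same route as the paper: the archimedean case is immediate (the paper simply says ``clearly''), and for finite primes both you and the paper invoke Proposition~\ref{SmoothFpPointProp} to get a smooth $\mathbb{F}_p$-point off the divisor $x_0=0$ and then lift via Hensel's lemma to a $\mathbb{Z}_p$-point of $\mathcal{U}_n$. Your write-up is slightly more explicit about why the lift stays in the affine chart, but the argument is the same.
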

\begin{proof} For the real place $v$, clearly $\mathcal{U}_{n}(\mathbb{Z}_{v})\neq 0$. Let $p$ be a prime. Then by Proposition \ref{SmoothFpPointProp} we see that $\mathcal{X}_{n,p}$ has a smooth $\mathbb{F}_{p}$-point away from the divisor $x_{0}=0$. Using Hensel's lemma we can lift this point to a $\mathbb{Z}_p$-point $\mathcal{X}_{n}$ which lies in the image of $\mathcal{U}_{n}(\mathbb{Z}_{p})\hookrightarrow \mathcal{X}_{n}(\mathbb{Z}_{p})$.
\end{proof}
\subsection{Brauer group of $U_{n}$}
Using Proposition \ref{SplittingFieldProp} we see that the splitting field $K$ of $X_{n}$ for any $n$ is defined by the cubics $f_{1}(x)=x^3-x-6n$ and $f_{2}(x)=x^3 + 12x^2 - 36x +972n^2 - 4$. We can now determine the Brauer group of $\mathcal{U}_{n}\times_{\mathbb{Z}}\mathbb{Q}$.
\begin{lemma}\label{SumOfThreeTetSplitting}
Fix $n\in \mathbb{Z}$, denote by $K$ the compositum of the splitting fields of $f_{1}$ and $f_{2}$. If $f_{1}$ is irreducible then $[K:\mathbb{Q}]=36$.
\end{lemma}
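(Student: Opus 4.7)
The plan is to invoke Lemma \ref{Section5MainLemma}: for $n \notin A_2$ it gives $K$ as an $S_3 \times S_3$- or $C_2 \times S_3$-extension of $\mathbb{Q}$, with the former case exactly when $f_1$ is irreducible. Under the hypothesis that $f_1$ is irreducible this forces $[K : \mathbb{Q}] = 36$, so it suffices to verify $n \notin A_2$: namely, that $\Delta_1(n)$, $\Delta_2(n)$, $\Delta_3(n)$ are all non-squares in $\mathbb{Q}$ and that $f_2$ is irreducible over $\mathbb{Q}$.

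Specialising the formulae of Section \ref{GeometryOfCompact} to the tetrahedral setting ($a = -1$, $b = 0$, with the ``$n$'' of that section equal to $6n$ in the present notation) gives
\begin{align*}
\Delta_1(n) &= 4 - 972 n^2, \\
\Delta_2(n) &= -243(972 n^2 - 4)(108 n^2 - 4), \\
\Delta_3(n) &= 972(27 n^2 - 1).
\end{align*}
Since $f_1(0) = -6n$, the hypothesis that $f_1$ is irreducible forces $n \neq 0$, so $\Delta_1(n)$ and $\Delta_2(n)$ are strictly negative and hence not rational squares. Writing $972 = (2 \cdot 9)^2 \cdot 3$, squareness of $\Delta_3(n)$ reduces to whether $3(27 n^2 - 1)$ is a rational square; setting $k^2 = 3(27 n^2 - 1)$ would force $3 \mid k$ and then $3 m^2 = 27 n^2 - 1$, which is impossible modulo $3$.

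For the irreducibility of $f_2$ I would in fact prove the stronger assertion that $f_2$ is irreducible for every $n \in \mathbb{Z}$. Being monic with integer coefficients, $f_2$ has a rational root if and only if it has an integer root; depressing via $x = y - 4$ turns the equation $f_2(x_0) = 0$ into $y_0^3 - 12 y_0 = 20 - 972 n^2$. Reduction mod $2$ forces $y_0$ even, and substituting $y_0 = 2 z_0$ yields $2(z_0^3 - 3 z_0) = 5 - 243 n^2$, whose parity then forces $n$ to be odd. Using $n^2 \equiv 1 \pmod 8$ for $n$ odd, the equation mod $8$ requires $z_0(z_0^2 - 3) \equiv 1 \pmod 4$; but $z_0(z_0^2 - 3) \equiv 0 \pmod 4$ when $z_0$ is even and $\equiv 2 \pmod 4$ when $z_0$ is odd, a contradiction.

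The main obstacle is arranging the mod-$8$ calculation for $f_2$ cleanly; once this is in hand, combining the discriminant verification and the $f_2$-irreducibility with Lemma \ref{Section5MainLemma} completes the proof.
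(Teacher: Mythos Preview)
Your approach is essentially the paper's: verify that $\Delta_1,\Delta_2,\Delta_3$ are non-squares and that $f_2$ is irreducible, then invoke the general machinery of Section~\ref{GeometryOfCompact} (the paper argues directly rather than citing Lemma~\ref{Section5MainLemma}, but the content is the same). Your discriminant computations and the mod-$3$ argument for $\Delta_3$ agree with the paper's.

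There is one small slip in your irreducibility argument for $f_2$. You claim $z_0(z_0^2-3)\equiv 0\pmod 4$ whenever $z_0$ is even, but for $z_0\equiv 2\pmod 4$ one has $z_0(z_0^2-3)\equiv 2\cdot 1\equiv 2\pmod 4$. This does not damage the conclusion: running through $z_0\equiv 0,1,2,3\pmod 4$ gives the values $0,2,2,2$, none equal to $1$, so the contradiction stands. For comparison, the paper dispatches the irreducibility of $f_2$ by observing that $f_2$ has no root modulo $27$; since $972n^2\equiv 0\pmod{27}$, this is a single finite check independent of $n$.
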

\begin{proof}
As $f_{2}$ has no solutions modulo 27 and is a degree 3 polynomial we have that, $f_{2}$ is irreducible for any choice of $n\in \mathbb{Z}$. In addition to this, $f_{2}$ has a discriminant equal to 
$-3888 \, {\left(243 \, n^{2} - 1\right)} {\left(27 \, n^{2} - 1\right)}$, which is never a square for any $n \in \mathbb{Z}$. Moreover, $f_{1}(x)$ has discriminant $ 4(1-243n^2)$, which is also never a square for any choice of $n\in \mathbb{Z}$ and the quotient of the discriminants of $f_{1}$ and $f_{2}$ is never a square, as it is not a square modulo 3, for any $n\in \mathbb{Z}$. Hence, $[K:\mathbb{Q}]=36$.
\end{proof}
\begin{proposition}\label{SumOfThreeTetBraur}
Fix $n \in \mathbb{Z}$ such that $f_{1}$ is irreducible then $\Br U_{n}=\Br X_{n}=\Br \mathbb{Q}$.
\end{proposition}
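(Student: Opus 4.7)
The plan is to assemble the pieces already proved in earlier sections. Since the irreducibility of $f_{1}$ is the hypothesis, I would first invoke Lemma \ref{SumOfThreeTetSplitting} to conclude that the compositum $K$ of the splitting fields of $f_{1}$ and $f_{2}$ has degree $36$ over $\mathbb{Q}$. By Proposition \ref{SplittingFieldProp}, $K$ is the splitting field of the cubic surface $X_{n}$, and by Proposition \ref{FunctionFieldGalois} the Galois group $\Gal(K/\mathbb{Q})$ embeds into $S_{3}\times S_{3}$; since the extension has degree $36$ this embedding must be an isomorphism.

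Next, I would apply Proposition \ref{TrivialBrauerGroup1} directly: with $\Gal(K/\mathbb{Q})\cong S_{3}\times S_{3}$, the proposition yields $\Br X_{n}=\Br \mathbb{Q}$. This uses the fact that $X_{n}$ is smooth, which is Proposition \ref{SmootnessOfTet}, so the Hochschild–Serre spectral sequence is available.

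To pass from $\Br X_{n}$ to $\Br U_{n}$, I would invoke Proposition \ref{AffineBrauerGroup}. Its two hypotheses are: (i) $X_{n}$ is smooth, already established via Proposition \ref{SmootnessOfTet}; and (ii) the discriminant of $f_{1}$ is not a square. The discriminant of $f_{1}(x)=x^{3}-x-6n$ is $4(1-243n^{2})$, which is recorded in the proof of Lemma \ref{SumOfThreeTetSplitting} as never being a square in $\mathbb{Q}$ for any $n\in\mathbb{Z}$. With both conditions satisfied, Proposition \ref{AffineBrauerGroup} gives an isomorphism $\Br X_{n}\xrightarrow{\sim}\Br U_{n}$, hence $\Br U_{n}=\Br X_{n}=\Br\mathbb{Q}$.

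The proof is essentially a citation assembly, so there is no real obstacle — all the hard work has already been done in Sections \ref{GeometryOfCompact}, \ref{GeometryOfAffine}, and in Lemmas \ref{SmoothSumOfTetLemma} and \ref{SumOfThreeTetSplitting}. The only item to double-check is that the normalisations of the cubic surface $X_{n}$ used in this section (coming from $F(x_{i},x_{0})=x_{i}(x_{i}+x_{0})(x_{i}+2x_{0})$) match the depressed form $u^{3}+au+b$ required by the general theory of Section \ref{GeometryOfCompact}; this is achieved by the substitution $u_{i}\mapsto u_{i}-1$, after which $f(u)=u^{3}-u$ has $a=-1\neq 0$ and the equation becomes $\sum u_{i}^{3}-\sum u_{i}=6n$, matching $f_{1}(x)=x^{3}-x-6n$.
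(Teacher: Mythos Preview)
Your proposal is correct and follows essentially the same route as the paper's proof. The only cosmetic difference is that the paper cites Proposition~\ref{TrivialBrauerGroup} (which packages the $n\notin A_{2}$ condition), whereas you unwind one layer and cite Proposition~\ref{TrivialBrauerGroup1} directly after identifying $\Gal(K/\mathbb{Q})\cong S_{3}\times S_{3}$; both rest on Lemma~\ref{SumOfThreeTetSplitting} and Proposition~\ref{AffineBrauerGroup} in the same way, and your explicit check of the normalisation and smoothness hypotheses is a welcome addition.
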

\begin{proof}
Using Lemma \ref{SumOfThreeTetSplitting} and Proposition \ref{TrivialBrauerGroup} then $\Br X_{n}=\Br \mathbb{Q}$. As the discriminant of $f_{1}$ is never a square for any $n \in \mathbb{Z}$ we can apply Proposition \ref{AffineBrauerGroup} to deduce that $\Br X_{n}=\Br U_{n}$.
\end{proof}
\begin{theorem}
There is no integral Brauer-Manin obstruction to the Hasse principle on the affine surface
\begin{align*}
    \mathcal{U}_{n}: u_{1}(u_{1}+1)(u_{1}+2)+u_{2}(u_{2}+1)(u_{2}+2)+u_{3}(u_{3}+1)(u_{3}+2)=6n
\end{align*} for all $n \in \mathbb{Z}$.
\end{theorem}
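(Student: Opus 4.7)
The plan is to combine the everywhere local solubility statement of Theorem~\ref{SumOfThreeTetELS} with a case split on whether the auxiliary cubic $f_1(x) = x^3 - x - 6n$ is reducible over $\mathbb{Q}$. Since Theorem~\ref{SumOfThreeTetELS} gives $\mathcal{U}_n(\mathbb{A}_{\mathbb{Z}}) \neq \emptyset$ unconditionally in $n$, the task reduces to producing a point of $\mathcal{U}_n(\mathbb{A}_{\mathbb{Z}})^{\Br}$.

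First I would dispose of the case where $f_1$ is irreducible over $\mathbb{Q}$. Here Proposition~\ref{SumOfThreeTetBraur} gives $\Br U_n = \Br \mathbb{Q}$, and since constant Brauer classes pair to zero on any adelic point by global reciprocity, we obtain $\mathcal{U}_n(\mathbb{A}_{\mathbb{Z}})^{\Br} = \mathcal{U}_n(\mathbb{A}_{\mathbb{Z}}) \neq \emptyset$, hence no obstruction.

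In the reducible case, since $f_1$ is monic with integer coefficients, the rational root theorem furnishes an integer root $r \in \mathbb{Z}$ satisfying $r^3 - r = 6n$, equivalently $(r-1)r(r+1) = 6n$. Translating back to the tetrahedral coordinates via $u = v - 1$, the triple $(u_1, u_2, u_3) = (r-1, -1, -1)$ lies in $\mathcal{U}_n(\mathbb{Z})$: the two summands $u_i(u_i+1)(u_i+2)$ with $u_i = -1$ vanish, while the first contributes $(r-1)r(r+1) = 6n$. Thus $\mathcal{U}_n(\mathbb{Z}) \neq \emptyset$, and the integral Hasse principle, and \emph{a fortiori} the absence of any integral Brauer--Manin obstruction, follows trivially.

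The main obstacle has in fact already been overcome in the preceding subsections: the substantive work lies in the everywhere local solubility argument (a case-by-case analysis of the special fibres $\mathcal{X}_{n,p}$ together with Hensel lifting, carried out in Section~\ref{PadicTet}) and in the Galois-theoretic computation of $\Br U_n = \Br \mathbb{Q}$ via the splitting field of $X_n$ (Lemma~\ref{SumOfThreeTetSplitting} and Proposition~\ref{SumOfThreeTetBraur}). The remaining proof is therefore only the short assembly of these ingredients together with the elementary observation handling the reducible case.
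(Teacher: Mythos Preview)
Your proof is correct and follows exactly the paper's approach: the same case split on the reducibility of $f_1$, invoking Proposition~\ref{SumOfThreeTetBraur} and Theorem~\ref{SumOfThreeTetELS} in the irreducible case, and observing $\mathcal{U}_n(\mathbb{Z})\neq\emptyset$ in the reducible case. The only difference is that you spell out the explicit integral point $(r-1,-1,-1)$ where the paper merely asserts its existence.
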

\begin{proof}
If $f_{1}(u)=u^3-u-6n$ is reducible by Gauss's Lemma \cite[Thm 2.3]{S02} there exists $\alpha\in \mathbb{Z}$ such that $f_{1}(\alpha)=0$. Hence, $(u_{1},u_{2},u_{3})=(\alpha+1,0,0)\in \mathcal{U}_{n}(\mathbb{Z})$ i.e. there is no integral Brauer-Manin obstruction in this case. Suppose $f_{1}$ is irreducible, by Proposition \ref{SumOfThreeTetBraur} $\Br U_{n}=\Br \mathbb{Q}$, and by Theorem \ref{SumOfThreeTetELS} we have $\mathcal{U}_{n}(\mathbb{A}_{\mathbb{Z}}) \neq \emptyset$, hence $\mathcal{U}_{n}(\mathbb{A}_{\mathbb{Z}})^{\Br}=\mathcal{U}_{n}(\mathbb{A}_{\mathbb{Z}}) \neq \emptyset.$
\end{proof}
\begin{remark}
Even in the case where $f_{1}$ is reducible we still have $\Br X_{n}=\Br U_{n}=\Br \mathbb{Q}$ by Propositions \ref{TrivialBrauerGroup},\ref{AffineBrauerGroup}. In regards to the integral Brauer-Manin this fact is useless as you have an obvious integral point, however if one was studying weak approximation on the surfaces $U_{n}$ this does become useful, see Remark \ref{rem: CT conjecture WA}.
\end{remark}
\subsection{Rationality of the sum of three tetrahedral numbers} 
For a more complete understanding of the equation defined by the sum of three tetrahedral we show that these surfaces are non-rational when there is not an obvious integral solution.
\begin{definition}
We say that a surface $X$ is $k$-rational if there exists a birational map $\mathbb{P}^{m} \dashrightarrow X$ defined over $k$, for some positive $m$.
\end{definition}
\begin{definition}
A smooth cubic surface over a field $k$ is said to be $k$-minimal if there is no nonempty $\Gal(\bar{k}/k)$-stable set of pairwise non-intersecting exceptional curves.
\end{definition}
\begin{theorem}[{\cite[Thm 37.1]{Man86}}]\label{thm: minimal cubic surface not rational}
Every minimal smooth cubic surface is non-rational.
\end{theorem}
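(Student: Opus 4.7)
The plan is to prove non-rationality via a $k$-birational invariant that separates a minimal cubic surface from $\PP^{2}$. The standard tool is $\HH^{1}(k,\Pic \bar{X})$, which, for smooth projective geometrically rational $k$-varieties, is a $k$-birational invariant.

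First, I would establish the birational invariance. Under the blow-up of a regular closed point $p\in X$ with residue field $k(p)$, the module $\Pic \bar{X}$ gains a direct summand isomorphic to the permutation module $\ZZ[\Gal(\bar{k}/k)/\Gal(\bar{k}/k(p))]$ (generated by the geometric components of the exceptional divisor), and such permutation modules have vanishing $\HH^{1}$ by Shapiro's lemma combined with Hilbert 90. Weak factorisation for surfaces (every birational map between smooth projective surfaces decomposes into blow-ups and blow-downs at closed points) then yields full birational invariance.

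Next, $\HH^{1}(k,\Pic \overline{\PP^{2}})=\HH^{1}(k,\ZZ)=0$, since the Galois action on $\Pic \overline{\PP^{2}}=\ZZ$ is trivial. Thus any $k$-rational surface $Y$ satisfies $\HH^{1}(k,\Pic \bar{Y})=0$, and it remains to show the reverse conclusion for minimal cubic surfaces: that $\HH^{1}(k,\Pic \bar{X})\neq 0$ for every $k$-minimal smooth cubic surface $X$. This is the main obstacle. The Galois action factors through a subgroup $G\subset W(E_{6})$ as in Remark \ref{GaloisActionRem}, and $k$-minimality forces $G$ to leave no non-empty set of mutually skew lines stable. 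One therefore enumerates, up to conjugacy in $W(E_{6})$, the subgroups $G$ whose action on the 27 lines has no invariant collection of disjoint lines, and verifies $\HH^{1}(G,\Pic \bar{X})\neq 0$ for each. In practice this verification is a finite MAGMA computation of the same flavour as the one invoked in Section \ref{SmoothCubicSurfaces}, and it is precisely this step which constitutes the content of Manin's argument.
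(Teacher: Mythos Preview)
Your proposed route has a genuine gap: it is \emph{not} true that every $k$-minimal smooth cubic surface has $\HH^{1}(k,\Pic\bar{X})\neq 0$. The very surfaces treated in this paper already furnish counterexamples. By Table~\ref{PossGalType}, the $S_{3}\times S_{3}$ action with orbit type $[3,3,3,18]$ has $\HH^{1}(\QQ,\Pic\bar{X})=0$, yet Lemma~\ref{Non-rational} shows precisely these surfaces are $\QQ$-minimal (each size-$3$ orbit is a triangle of coplanar lines, and the size-$18$ orbit is too large to consist of pairwise skew lines). More generally, a generic cubic surface whose Galois image is all of $W(E_{6})$ has a single orbit of $27$ lines, hence is minimal with $\rank\Pic X=1$, while Manin's tables give $\HH^{1}(W(E_{6}),\Pic\bar{X})=0$. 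So the invariant you propose cannot separate minimal cubic surfaces from $\PP^{2}$, and the final ``finite MAGMA computation'' you describe would in fact fail.

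The paper itself does not prove this theorem; it simply cites \cite[Thm~37.1]{Man86}. Manin's argument (going back to Segre) is of an entirely different nature: it is a Noether--Fano type inequality. One supposes a birational map $\phi:X\dashrightarrow \PP^{2}$, given by a linear system $|D|$ with $D\equiv -nK_{X}$ for some $n\geq 1$, and analyses the multiplicities of its base points. Minimality forces the base locus to be Galois-stable, and a count using $D^{2}$ and $D\cdot K_{X}$ shows that some base point must have multiplicity exceeding $n$; iterating via elementary links (Geiser/Bertini involutions) decreases $n$ and leads to a contradiction. Equivalently, one shows that no birational map from a minimal cubic surface can land on a minimal surface with $K^{2}>3$, ruling out $\PP^{2}$ (where $K^{2}=9$). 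This is intersection-theoretic rather than cohomological, and it is exactly what is needed because $\HH^{1}$ is too coarse here.
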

\begin{lemma}\label{Non-rational} If $n$ is an integer such that $f_{1}=x^3-x-6n$ is irreducible then,
$X_{n}$ is a non-rational surface over $\mathbb{Q}$.
\end{lemma}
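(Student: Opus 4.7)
The plan is to apply the theorem cited immediately above the lemma: every $\mathbb{Q}$-minimal smooth cubic surface is non-rational. Since $X_n$ is smooth by Proposition \ref{SmootnessOfTet}, it suffices to establish that $X_n$ is $\mathbb{Q}$-minimal, i.e.\ that there is no non-empty $\Gal(\overline{\mathbb{Q}}/\mathbb{Q})$-stable set of pairwise non-intersecting exceptional curves on $\bar{X}_n$. On a smooth cubic surface the exceptional curves are precisely the 27 lines.

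First I would identify the Galois action on the 27 lines. By Lemma \ref{SumOfThreeTetSplitting}, when $f_{1}$ is irreducible the compositum $K$ of the splitting fields of $f_{1}$ and $f_{2}$ has $[K:\mathbb{Q}] = 36$, and by Proposition \ref{SplittingFieldProp}, $K$ is the splitting field of $X_n$. By the discussion of specialisations in the remark following Proposition \ref{FunctionFieldGalois}, the Galois group $\Gal(K/\mathbb{Q}) \cong S_{3} \times S_{3}$ acts on the 27 lines with orbit type $[3,3,3,18]$. The three orbits of size three are precisely the rows $\{L_{i,1}, L_{i,2}, L_{i,3}\}$ for $i = 1, 2, 3$ described in the notation above Section \ref{SplittingFieldSection}.

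Next I would check that no union of these four orbits consists of pairwise disjoint lines. Each orbit of size three is a triple of coplanar lines meeting at an Eckardt point (as noted in the remark following the notation), so these three lines pairwise intersect; hence no orbit of size three by itself is a valid configuration. The orbit of size 18 cannot be a valid configuration either, since the maximum number of pairwise disjoint lines on a smooth cubic surface is 6 (a "sixer"). Any non-empty Galois-stable subset of the 27 lines is a union of these four orbits, and every such union still contains one of the offending intersecting pairs. Therefore no non-empty Galois-stable configuration of pairwise non-intersecting lines exists, so $X_n$ is $\mathbb{Q}$-minimal, and the cited theorem yields non-rationality.

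The only mild obstacle is keeping the bookkeeping of the Galois orbits straight; once the orbit type $[3,3,3,18]$ and the coplanarity of each row of three lines are in hand, the argument is essentially a finite check. No delicate computation is required, since the classical bound of 6 on mutually disjoint lines on a cubic surface rules out the large orbit immediately.
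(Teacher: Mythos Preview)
Your proof is correct and follows essentially the same route as the paper: show $X_n$ is $\mathbb{Q}$-minimal by noting the Galois orbit type is $[3,3,3,18]$, that each size-3 orbit is a coplanar triangle (hence contains intersecting lines), and that the size-18 orbit exceeds the maximum of 6 pairwise skew lines on a cubic surface. Your write-up is slightly more explicit about unions of orbits, but the argument is the same.
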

\begin{proof}
By Theorem \ref{thm: minimal cubic surface not rational} it is sufficient to prove for such a choice of $n$ the cubic surface $X_{n}$ is minimal. $X_{n}$ has orbit type $[3,3,3,18]$ where the Galois orbits of size $3$ meet at an Eckardt point (see Remark \ref{rem: label lines}) i.e. they are not pairwise skew. The Galois orbit of size 18 cannot be pairwise skew as any set containing more than 6 lines of a smooth cubic surface contains intersecting lines.
\end{proof}
\begin{theorem}
If $n$ is an integer such that $f_{1}=x^3-x-6n$ is irreducible then $U_{n}$ is non-rational over $\mathbb{Q}$.
\end{theorem}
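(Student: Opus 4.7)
The plan is to reduce the statement about the affine surface $U_n$ to the corresponding statement about its smooth compactification $X_n$, which has already been established in Lemma \ref{Non-rational}. The key observation is that rationality is a birational invariant: if $Y$ and $Y'$ are birationally equivalent varieties over $\mathbb{Q}$, then $Y$ is $\mathbb{Q}$-rational if and only if $Y'$ is.

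First, I would note that $U_n$ sits as a Zariski-dense open subscheme inside $X_n$ (concretely, $U_n = X_n \setminus D$ where $D$ is the hyperplane section $x_0 = 0$, as used in the proof of Proposition \ref{AffineBrauerGroup}). In particular, the inclusion induces a birational equivalence $U_n \dashrightarrow X_n$ defined over $\mathbb{Q}$. Therefore $U_n$ is $\mathbb{Q}$-rational precisely when $X_n$ is $\mathbb{Q}$-rational.

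Second, I would invoke Lemma \ref{Non-rational}: under the hypothesis that $f_1 = x^3 - x - 6n$ is irreducible over $\mathbb{Q}$, the smooth cubic surface $X_n$ is $\mathbb{Q}$-minimal and hence non-rational over $\mathbb{Q}$. Combining this with the birational equivalence above, $U_n$ is non-rational over $\mathbb{Q}$ as claimed.

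There is no real obstacle here; the entire argument is a one-line invocation of birational invariance together with the already-proved non-rationality of the compactification. The only subtlety worth making explicit in the write-up is that $U_n$ and $X_n$ are genuinely birational over $\mathbb{Q}$ (not merely over an extension), which is immediate from the fact that $U_n$ is obtained from $X_n$ by removing a $\mathbb{Q}$-rational divisor.
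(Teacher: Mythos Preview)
Your proposal is correct and matches the paper's proof essentially verbatim: $U_n$ is a Zariski open subset of $X_n$, hence birational to it over $\mathbb{Q}$, and Lemma \ref{Non-rational} then gives non-rationality.
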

\begin{proof}
$U_{n}$ is birational to $X_{n}$ as it is a Zariski open subset of $X_{n}$. Using Lemma \ref{Non-rational} we can deduce $U_{n}$ is non-rational. 
\end{proof}

\section{Counter-example to weak and strong approximation}\label{SectionSA}
Even though in Theorem \ref{BIGTHM} we showed the surfaces $\mathcal{U}_{n}$ almost never have an integral Brauer-Manin obstruction the surface $U_{n}:=\mathcal{U}_{n}\times_{\mathbb{Z}}\mathbb{Q}$ may still fail strong approximation. We show an example where $U_{n}$ fails weak approximation, hence fails strong approximation. In Section \ref{SectionSA} by a variety over a field $k$ we mean a smooth, separated, geometrically integral scheme of finite type over a field $k$. Throughout let $k$ be a number field and $U$ a variety over $k$. For $S$ any finite set of places of $k$, denote by \[
U(\mathbb{A}_{k}^{S}):= \{(x_{v})_{v}\in \prod\limits_{v\notin S}U(k_{v}): v(x_{v})\geq 0\ \text{for all but finitely many}\ v \}.
\] $U(\mathbb{A}_{k}^{S})$ has a natural restricted product topology. 
\subsection{Weak and strong approximation} We now recall the definition of weak and strong approximation and the relationship between them.
\begin{definition}[{\cite[\S3, p.12]{CTX09}}]
We say $U$ satisfies \emph{strong approximation} off a finite set of place $S\subset \Omega_{k}$ if the diagonal image $U(k)\hookrightarrow U(\mathbb{A}^{S}_{k})$ is dense. Equivalently, if $U_{n}(k)\cap W\neq \emptyset$ for all non-empty open subsets $W \subset U_{n}(\mathbb{A}_{k}^{S})$. If the diagonal image of $U(k)\hookrightarrow U(\mathbb{A}_{k})$ is dense, we simply say $U$ satisfies strong approximation.
\end{definition}
\begin{definition}
We say $U$ satisfies weak approximation off $S$ a finite set of place $S\subset \Omega_{k}$ if the diagonal image \[U(k)\hookrightarrow \prod\limits_{v\in \Omega_{k}\backslash S}U(k_{v})\] is dense. If the diagonal image $U(k)\hookrightarrow \prod_{v\in \Omega_{k}}U(k_{v})$ is dense, we simply say $U$ satisfies weak approximation .
\end{definition}
\begin{remark}
\begin{enumerate}
\item It is clear by definition that strong approximation implies weak approximation.
\item If $X$ is a projective variety then $X(\mathbb{A}_{k})=\prod_{v\in \Omega_{k}} X(k_{v})$, hence weak approximation is equivalent to strong approximation in this case.
\end{enumerate}
\end{remark}
\begin{lemma}[{\cite[Prop 13.2.3]{CTS21}}]\label{lemma: WA birational invariant}
If $U$ and $U'$ are birational varieties over $k$ such that $U$ and $U'$ are everywhere locally soluble then $U$ satisfies weak approximation if and only if $U'$ satisfies weak approximation.
\end{lemma}
\begin{remark}
It should be noted that strong approximation is not a birational invariant. For example if one takes $U=\mathbb{A}^{1}_{\mathbb{Q}},U':=\mathbb{G}_{m,\mathbb{Q}}$ and $S\neq \emptyset$ then $U$ satisfies strong approximation off $S$ whereas $U'$ does not. The failure of strong approximation on $U'$ can be explained by a result of Minčhev \cite{Min89} and the fact that $\pi^{\et}(\bar{U}')=\hat{\mathbb{Z}}$.
\end{remark}

 \begin{remark}\label{rem: CT conjecture WA} Let $f(u)=u^3+a_{2}u^2+a_{1}u+a_{0}$ such that $3a_{1}-a_{2}^2\neq 0$ and denote by $U_{n}$ be the surface \[
U_{n}:f(u_{1})+f(u_{2})+f(u_{n})=n\subset \mathbb{A}^{3}_{\mathbb{Q}}.
 \] The proof of Theorem \ref{BIGTHM} shows that for all but finitely $n\in \mathbb{Z}$ we have $\Br U_{n}=\Br X_{n}=\Br \mathbb{Q}$, where $X_{n}$ is the compactification of $U_{n}$  One should then expect the surfaces $U_{n}$ to rarely fail weak approximation. Consider the case where the compactificaiton $X_{n}$ is smooth and $\Br X_{n}=\Br \mathbb{Q}$. Note that $X_{n}(\mathbb{Q})\neq \emptyset$ then as $X_{n}$ is a smooth cubic surface the set $X(\mathbb{Q})$ will be Zariski dense in $X_{n}$, this implies $U_{n}(\mathbb{Q})\neq \emptyset$. Under a conjecture of Colliot-Thélène \cite[Conj 14.1.2]{CTS21}, $X(\mathbb{Q})$ is dense in $X_{n}(\mathbb{A}_{\mathbb{Q}})^{\Br}=X_{n}(\mathbb{A}_{\mathbb{Q}})$ i.e. if this conjecture holds $X_{n}$ satisfies weak approximation. As $X_{n}$ and $U_{n}$ are smooth birational varieties which are everywhere locally soluble, under conjecture \cite[Conj 14.1.2]{CTS21}, $U_{n}$ would also satisfy weak approximation. In particular, the example of sum of three tetrahedral numbers given in Section \ref{SumOfThreeTetSection} will satisfy weak approximation if conjecture \cite[Conj 14.1.2]{CTS21} is true.
 \end{remark}

\subsection{Example}
\begin{proposition}[{\cite[Prop 13.3.12]{CTS21}}]\label{prop: WA and Brauer group}
Let $X$ be a projective variety over $k$ such that $X(k)\neq \emptyset$. If exists $\alpha\in \Br X$ where $\alpha$ takes at least two different values on $X(k_{v})$ for a place $v\in \Omega_{k}$, then $X$ fails weak approximation.
\end{proposition}

 \begin{theorem}[{\cite[Cor 11.3.5]{CTS21}}] \label{BigThm}
 Let $X \rightarrow \mathbb{P}^{1}_{k}$ be a conic bundle. If we fix $m \in \mathbb{P}^{1}_{k}$ a closed point such that $\pi^{-1}(m)$ is smooth. Let $S\subset \mathbb{P}^{1}_{k}$ be the the finite set of closed points with singular fibres. We define $\mathbb{P}^{1} \supset \mathbb{A}^{1}_{k}=\mathbb{P}^{1}_{k}\backslash \{m\}$ with coordinates $t$. Then 
 \begin{align*}
     \Br X/\Br k=\pi^{*}(B)
 \end{align*}where $B \subset \Br(k(\mathbb{P}^{1}))=\Br(k(t))$ is a finite subgroup. Moreover, we have an explicit description of the elements in $B$. A closed point $p \in S$ is the zero set of some monic irreducible polynomial $P(t)\in k[t]$. Let $k(p)=k[t]/(P(t))$ be the residue field of $p$ and denote $\tau_{p}$ to be the image of $t$. Then there exists a quadratic extension $F_{p}=k(p)(\sqrt{a_{p}})$ such that the fibre above $p$ splits over $F_{p}$ into two transversal lines, now we can define the set $W \subset \mathbb{F}_{2}^{|S|}$ of vectors $\varepsilon = (\varepsilon_{p})_{p \in S}$ such that
$$ 
     \prod\limits_{p \in S}\left(\Norm_{k(p)/k}(a_{p})\right)^{\varepsilon_{p}}=1 \in k^{*}/(k^{*})^{2}.
$$ Then there is an injective map $W\rightarrow \Br k(t)$ which sends $\varepsilon$ to
$$
     A_{\varepsilon}= \sum\limits_{p \in S}\varepsilon_{p} \Cores_{k(p)/k}(t-\tau_{p},a_{p})
$$ where $(t-\tau_{p},a_{p})$ represents a quaternion algebra in $\Br(k(p)(t))$.
 \end{theorem}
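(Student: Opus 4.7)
The plan is to compute $\Br X/\Br k$ by analyzing how elements of $\Br k(t)$ pull back to unramified Brauer classes on $X$, using Grothendieck's purity theorem and the Faddeev exact sequence. First, I would embed $\Br X \hookrightarrow \Br k(X)$ via purity, so that an element lies in $\Br X$ exactly when all its residues along irreducible divisors of $X$ vanish. Since $\pi$ is a conic bundle, the only divisors with potentially nontrivial residue are vertical, lying above closed points of $\PP^1_k$. Combined with the Faddeev sequence
\begin{equation*}
0 \to \Br k \to \Br k(t) \xrightarrow{\oplus \partial_p} \bigoplus_{p \in \PP^1_k} \HH^1(k(p), \QQ/\ZZ) \to 0,
\end{equation*}
and the fact that $\pi^{*}:\Br k(t) \to \Br k(X)$ is surjective modulo the Brauer class of the generic conic $X_\eta$, the task reduces to determining which residue tuples $(\partial_p A)_p$ can arise for a class $A \in \Br k(t)$ whose pull-back to $X$ is unramified.

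Second, I would perform the local analysis fiber-by-fiber. At a closed point $p \notin S$ where the fiber is smooth, the preimage $\pi^{-1}(p)$ is an irreducible divisor whose residue on $X$ coincides with $\partial_p A$, so necessarily $\partial_p A = 0$. At $p \in S$ the preimage splits over $F_p = k(p)(\sqrt{a_p})$ as two transversal lines conjugate under $\Gal(F_p/k(p))$, and the residue of the pull-back along either line is the image of $\partial_p A$ under restriction $\HH^1(k(p), \QQ/\ZZ) \to \HH^1(F_p, \QQ/\ZZ)$. By inflation-restriction this image vanishes precisely when $\partial_p A \in \HH^1(k(p), \ZZ/2)$ lies in the subgroup generated by the character $\chi_p$ cutting out $F_p/k(p)$. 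Thus the allowed residue tuples form the $\FF_2$-vector space $\bigoplus_{p \in S} \ZZ/2 \cdot \chi_p$.

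Third, I would produce explicit representatives and read off the global constraint. The quaternion class $(t-\tau_p, a_p) \in \Br(k(p)(t))$ has residue $\chi_p$ at $p$ and trivial residues at all other finite places of $k(p)(t)$, so $\Cores_{k(p)/k}(t-\tau_p, a_p) \in \Br k(t)$ has the prescribed residue at $p$, zero residue at every other point of $\PP^1_k \setminus \{m\}$, and residue at $m$ equal to the class of $N_{k(p)/k}(a_p)$ in $k^{*}/(k^{*})^{2}$. Summing over $\varepsilon \in \FF_2^{|S|}$ gives $A_\varepsilon$, whose residue at $m$ is the class of $\prod_{p \in S} N_{k(p)/k}(a_p)^{\varepsilon_p}$; this class is unramified on $X$ iff the product is a square, which is precisely the defining condition of $B$. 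Injectivity of $B \to \Br X/\Br k$ comes from the fact that distinct $\varepsilon$ produce distinct residue patterns, and surjectivity follows from the classification of allowed residues above. The main obstacle is the compatibility of $\Cores$ with the residue map at $m$, which requires the tame-symbol formula together with the Weil reciprocity law on $\PP^1_k$: this is what converts the local residue computations into the global norm condition cutting out $B$.
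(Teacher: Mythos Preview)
The paper does not prove this theorem; it is quoted from \cite[\S 10.2]{CS21} and used as a black box in the proof of Theorem~\ref{FailsSA}. Your outline follows exactly the standard argument given in that reference (purity to reduce to $\Br k(X)$, Amitsur/Witt to descend to $\Br k(t)$, the Faddeev sequence, and the local residue computation at degenerate fibres), so there is nothing in the paper to compare against.

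One genuine imprecision in your sketch is the injectivity claim at the end. The map $\varepsilon \mapsto \pi^{*}A_{\varepsilon}$ from $B$ to $\Br X/\Br k$ is not injective in general: the quaternion class $[X_{\eta}]\in \Br k(t)$ of the generic conic has residue exactly $\chi_{p}$ at each $p\in S$ and residue $0$ at every smooth fibre (including $m$), so it corresponds to the all-ones vector $\varepsilon=(1,\dots,1)$; Faddeev reciprocity then forces $\prod_{p\in S}N_{k(p)/k}(a_{p})\in (k^{*})^{2}$, so this vector lies in $B$, yet $\pi^{*}[X_{\eta}]=0$ in $\Br X$ because $X_{\eta}$ acquires a rational point over $k(X)$. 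Hence the kernel contains the line spanned by $(1,\dots,1)$, and the precise statement is $\Br X/\Br k \cong B/\langle(1,\dots,1)\rangle$. Your argument that distinct $\varepsilon$ give distinct residue patterns in $\Br k(t)$ is correct, but passing through $\pi^{*}$ kills exactly this class. This does not affect the paper's application, which only needs an explicit list of generators.
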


 \begin{example} We now present an example which fails both strong and weak approximation, namely the affine suface \[
U_{50}:u_{1}^3+u_{2}^3+u_{3}^3-15(u_{1}+u_{2}+u_{3})+50=0.
 \]
Consider the cubic surface 
\[
    X_{50}:x_{1}^3+x_{2}^3+x_{3}^3-15(x_{1}+x_{2}+x_{3})x_{0}^2+50x_{0}^3=0.
\]
Then $X_{50}$ contains a line, namely $L=\{x_{1}+x_{2}=0,x_{3}+5x_{0}=0\}$, hence $X_{50}$ admits a conic bundle structure. We apply a change of variables $x_{1}+x_{2}\mapsto x_{1}, x_{3}+5x_{0}\mapsto x_{3}$ so we get a smooth cubic surface $X_{50}^{'}$ which is isomorphic to $X_{50}$ over $\mathbb{Q}$ and where $L':=\{x_{1}=x_{3}=0\}$ is the image of $L$ under this isomorphism. Using the method described in Remark \ref{ConstructConicBundle} we can construct the conic bundle map associated to the line $L'$, $\pi:X'_{50}\rightarrow \mathbb{P}^{1}_{\mathbb{Q}}$. The closed points of $\mathbb{P}^{1}_{\mathbb{Q}}$ which correspond to singular fibres of $\pi$ are $\{(t),(s-t),(s+t),(s^2 - 4st + t^2)\}$. We now use Theorem \ref{BigThm} to construct elements of $\Br X'_{50} /\Br \mathbb{Q}$.
\begin{enumerate}
    \item The residue field of the point $[0:1]$ is $\mathbb{Q}$ and the fibre above this point is $60x_0^2 - 15x_0x_1 + x_1^2=0$ which splits over $\mathbb{Q}(\sqrt{-15})$,
    \item The residue field of the point $[1:-1]$ is $\mathbb{Q}$ and the fibre above this point is split over $\mathbb{Q}$,
    \item The residue field of the point $[1:1]$ is $\mathbb{Q}$ and the fibre above this point is $45x_0^2 - 15x_0x_1 + 2x_1^2 - 3x_1x_2 + 3x_2^2=0$ which splits over $\mathbb{Q}(\sqrt{-15})$,
    \item The residue field of the point $p=(s^2-4st+t^2)$ is $\mathbb{Q}(\sqrt{12})$. The fibres above the two $\mathbb{Q}(\sqrt{12})$-points are split over $\mathbb{Q}(\sqrt{-15})$.
\end{enumerate}
We now can apply Theorem \ref{BigThm} and can see a non-trivial choice for $\varepsilon$ is:\[
    \varepsilon_{p} =
\begin{cases}
1 \ \text{for}\ p=[0:1],\\
0 \ \ \text{for}\ p=[1:-1],\\
1  \ \text{for}\ p=[1:1],\\
0  \ \text{for}\ p=(s^2-4st+t^2).\\
\end{cases}
\]
Then 
\[
    \prod\limits_{p \in S}\left(\Norm_{k(p)/k}(a_{p})\right)^{\varepsilon_{p}}= (-15)^2=1\  \left(\text{mod}\  (\mathbb{Q}^{\times})^2\right).
\] Using Magma we can check that $\Br X^{'}_{50}/\Br \mathbb{Q}\cong \mathbb{Z}/2\mathbb{Z}$, hence we can define the generator for $\Br X'_{50} /\Br \mathbb{Q}$ specifically the pullback of  
$A_{\varepsilon}=(x(x-1),-15)$ where $x=x_{1}/x_{3}$, this substitution acts as the pullback map i.e.
\[
    \Br X^{'}_{50}/\Br \mathbb{Q} \  \text{is generated by} \ \alpha' = \left(\left(\frac{x_{1}}{x_{3}}-1\right)\frac{x_{1}}{x_{3}},-15\right).
\] We can then deduce \[
    \Br X_{50}/\Br \mathbb{Q} \  \text{is generated by} \ \alpha = \left(\left(\frac{x_{1}+x_{2}}{x_{3}+5x_{0}}-1\right)\frac{x_{1}+x_{2}}{x_{3}+5x_{0}},-15\right).
\]
Let $p=\infty$ and $\boldsymbol{x}_{1}=[-1/10 :1/2 :-1:1]$ and $\boldsymbol{x}_{2}=[1/2 : 7/6 : 11/6 :1]$ on $X_{50}(\mathbb{R})$, and we shall denote $\alpha(\boldsymbol{x}_{i})$ for the image of $\boldsymbol{x}_{i}$ under the evaluation map $\text{ev}_{\alpha,\infty}$. Then we have 
\[
  \text{inv}_{\infty}(\alpha(\boldsymbol{x}_{i})) =
    \begin{cases}
      1 \ &\text{if} \ \alpha(\boldsymbol{x}_{i}) \ \text{is split over} \ \mathbb{R}\\
      -1 \ &\text{if} \ \alpha(\boldsymbol{x}_{i}) \ \text{is not split over} \ \mathbb{R}.
    \end{cases}       
\]
 We see that $\alpha(\boldsymbol{x}_{1})=(2,-15)$ which is split over $\mathbb{R}$ and $\alpha(\boldsymbol{x}_{2})=(-6/49,-15)$ which is not split over $\mathbb{R}$, hence $\text{inv}_{\infty}$ takes both values. As $X_{50}$ is a smooth projective variety such that $X_{50}(\mathbb{Q})\neq \emptyset$, Proposition \ref{prop: WA and Brauer group} tells us that $X_{50}$ fails weak approximation. It is clear that $U_{50}(\mathbb{Q})\neq \emptyset$ and $U_{50}$ is birational to $X_{50}$, hence by Lemma \ref{lemma: WA birational invariant} the failure of weak approximation on $X_{50}$ implies that $U_{50}$ fails weak approximation. Finally, as strong approximation implies weak approximation $U_{50}$ must also fail strong approximation.
 \end{example}

\section{Tables}
Table \ref{PossGalType} describes the possible Galois actions on the surface $\bar{X}_{n}$ with splitting field $k$ over $\mathbb{Q}$. The first column describes $\Gal(k/\mathbb{Q})$, the second column describes the possible Galois action on the lines i.e. $[1^3,2^{12}]$ means there are 3 orbits of size 1 and 12 orbits of size 2. Then the final column describes the first Galois cohomology of $\Pic \bar{X}_{n}$.
\begin{table}[H]
\caption{\label{PossGalType} Possible Galois Types}
\begin{tabular}{|l|l|l|}\hline
Galois Group & Orbit Type & $\HH^1(\mathbb{Q},\Pic \bar{X}_{n})$ \\\hline
$C_1, C_{2}$ & $[1^{27}],[1^{15}, 2^{6}]$ & $0$ \\\hline
$C_2$ & $[1^{3}, 2^{12}]$ & $(\mathbb{Z}/2\mathbb{Z})^2$ \\\hline
$C_2, C_{3}, C_{3}$ & $[1^{3}, 2^{12}],[1^{9}, 3^6],[3^9]$ & $0$ \\\hline
$C_3$ & $[3^{9}]$ & $(\mathbb{Z}/3\mathbb{Z})^2$ \\\hline
$C_2^2$ & $[1^{3}, 2^{6}, 4^{3}]$ & $\mathbb{Z}/2\mathbb{Z}$ \\\hline
$S_3$ & $[3^{3}, 6^{3}]$ & $(\mathbb{Z}/2\mathbb{Z})^2$ \\\hline
$S_3, C_{6}, C_{6}, S_3, S_{3}$ & $[1^{9}, 3^{6}], [1^{3}, 2^{3}, 6^{3}], [3^{5}, 6^{2}], [1^{3}, 2^{3}, 6^{3}], [3^{3}, 6^{3}]$ & $0$ \\\hline
$S_3$ & $[3^{3}, 6^{3}]$ & $\mathbb{Z}/3\mathbb{Z}$ \\\hline
$C_3^2$ & $[3^{3}, 9^{2}]$ & $\mathbb{Z}/3\mathbb{Z}$ \\\hline
$C_{2}\times S_{3}$ & $[3^{3}, 6, 12]$ & $\mathbb{Z}/2\mathbb{Z}$ \\\hline
$C_{2}\times S_{3}, C_3\rtimes S_3$ & $[1^{3}, 2^{3}, 6^{3}],[3^{3}, 18]$ & $0$ \\\hline
$C_3\times S_3$ & $[3^{3}, 9^{2}]$ & $\mathbb{Z}/3\mathbb{Z}$ \\\hline
$C_3\times S_3, S_{3}^{2}$ & $[3^{3}, 18],[3^{3}, 18]$ & $0$ \\\hline
\end{tabular}
\end{table}
\bibliographystyle{amsalpha}{}
\bibliography{bibliography/referencesproject0}

\providecommand{\bysame}{\leavevmode\hbox to3em{\hrulefill}\thinspace}
\providecommand{\MR}{\relax\ifhmode\unskip\space\fi MR }
\providecommand{\MRhref}[2]{%
  \href{http://www.ams.org/mathscinet-getitem?mr=#1}{#2}
}
\providecommand{\href}[2]{#2}
\begin{thebibliography}{{The}19}

\bibitem[BCP97]{MAGMA}
Wieb Bosma, John Cannon, and Catherine Playoust, \emph{The {M}agma algebra
  system. {I}. {T}he user language}, J. Symbolic Comput. \textbf{24} (1997),
  no.~3-4, 235--265, Computational algebra and number theory (London, 1993).
  \MR{MR1484478}

\bibitem[CTS21]{CTS21}
J.-L. Colliot-Th\'{e}l\`ene and A.~N. Skorobogatov, \emph{The
  {B}rauer-{G}rothendieck group}, Ergebnisse der Mathematik und ihrer
  Grenzgebiete. 3. Folge. A Series of Modern Surveys in Mathematics [Results in
  Mathematics and Related Areas. 3rd Series. A Series of Modern Surveys in
  Mathematics], vol.~71, Springer, Cham, 2021. \MR{4304038}

\bibitem[CTW12]{CTW12}
Jean-Louis Colliot-Th\'{e}l\`ene and Olivier Wittenberg, \emph{Groupe de
  {B}rauer et points entiers de deux familles de surfaces cubiques affines},
  Amer. J. Math. \textbf{134} (2012), no.~5, 1303--1327. \MR{2975237}

\bibitem[CTX09]{CTX09}
Jean-Louis Colliot-Th\'{e}l\`ene and Fei Xu, \emph{Brauer-{M}anin obstruction
  for integral points of homogeneous spaces and representation by integral
  quadratic forms}, Compos. Math. \textbf{145} (2009), no.~2, 309--363, With an
  appendix by Dasheng Wei and Xu. \MR{2501421}

\bibitem[FLS18]{FLS18}
Christopher Frei, Daniel Loughran, and Efthymios Sofos, \emph{Rational points
  of bounded height on general conic bundle surfaces}, Proc. Lond. Math. Soc.
  (3) \textbf{117} (2018), no.~2, 407--440. \MR{3851328}

\bibitem[Har77]{H77}
Robin Hartshorne, \emph{Algebraic geometry}, Graduate Texts in Mathematics, No.
  52, Springer-Verlag, New York-Heidelberg, 1977. \MR{0463157}

\bibitem[HB92]{HB92}
D.~R. Heath-Brown, \emph{The density of zeros of forms for which weak
  approximation fails}, Math. Comp. \textbf{59} (1992), no.~200, 613--623.
  \MR{1146835}

\bibitem[Isk79]{I79}
V.~A. Iskovskih, \emph{Minimal models of rational surfaces over arbitrary
  fields}, Izv. Akad. Nauk SSSR Ser. Mat. \textbf{43} (1979), no.~1, 19--43,
  237. \MR{525940}

\bibitem[JS17]{JS17}
J\"{o}rg Jahnel and Damaris Schindler, \emph{On integral points on degree four
  del {P}ezzo surfaces}, Israel J. Math. \textbf{222} (2017), no.~1, 21--62.
  \MR{3736498}

\bibitem[KCo]{KC10}
KConrad, \emph{Proof of the result that the galois group of a specialization is
  a subgroup of the original group?}, MathOverflow,
  URL:https://mathoverflow.net/q/34558 (version: 2010-08-05).

\bibitem[KT08]{KT08}
Andrew Kresch and Yuri Tschinkel, \emph{Two examples of {B}rauer-{M}anin
  obstruction to integral points}, Bull. Lond. Math. Soc. \textbf{40} (2008),
  no.~6, 995--1001. \MR{2471948}

\bibitem[Lan02]{S02}
Serge Lang, \emph{Algebra}, third ed., Graduate Texts in Mathematics, vol. 211,
  Springer-Verlag, New York, 2002. \MR{1878556}

\bibitem[Liu02]{Q02}
Qing Liu, \emph{Algebraic geometry and arithmetic curves}, Oxford Graduate
  Texts in Mathematics, vol.~6, Oxford University Press, Oxford, 2002,
  Translated from the French by Reinie Ern\'{e}, Oxford Science Publications.
  \MR{1917232}

\bibitem[Man86]{Man86}
Yu.~I. Manin, \emph{Cubic forms}, second ed., North-Holland Mathematical
  Library, vol.~4, North-Holland Publishing Co., Amsterdam, 1986, Algebra,
  geometry, arithmetic, Translated from the Russian by M. Hazewinkel.
  \MR{833513}

\bibitem[Min89]{Min89}
Kh.~P. Minchev, \emph{Strong approximation for varieties over an algebraic
  number field}, Dokl. Akad. Nauk BSSR \textbf{33} (1989), no.~1, 5--8, 92.
  \MR{984929}

\bibitem[Mor53]{M53}
L.~J. Mordell, \emph{On the integer solutions of the equation
  {$x^2+y^2+z^2+2xyz=n$}}, J. London Math. Soc. \textbf{28} (1953), 500--510.
  \MR{56619}

\bibitem[MW55]{MW55}
J.~C.~P. Miller and M.~F.~C. Woollett, \emph{Solutions of the {D}iophantine
  equation {$x^3+y^3+z^3=k$}}, J. London Math. Soc. \textbf{30} (1955),
  101--110. \MR{67916}

\bibitem[Ser92]{S16}
Jean-Pierre Serre, \emph{Topics in {G}alois theory}, Research Notes in
  Mathematics, vol.~1, Jones and Bartlett Publishers, Boston, MA, 1992, Lecture
  notes prepared by Henri Damon [Henri Darmon], With a foreword by Darmon and
  the author. \MR{1162313}

\bibitem[Sie14]{SC14}
Carl~L. Siegel, \emph{\"{U}ber einige {A}nwendungen diophantischer
  {A}pproximationen [reprint of {A}bhandlungen der {P}reu\ss ischen {A}kademie
  der {W}issenschaften. {P}hysikalisch-mathematische {K}lasse 1929, {N}r. 1]},
  On some applications of {D}iophantine approximations, Quad./Monogr., vol.~2,
  Ed. Norm., Pisa, 2014, pp.~81--138. \MR{3330350}

\bibitem[Sil09]{SIL09}
Joseph~H. Silverman, \emph{The arithmetic of elliptic curves}, second ed.,
  Graduate Texts in Mathematics, vol. 106, Springer, Dordrecht, 2009.
  \MR{2514094}

\bibitem[Sun]{325659}
Zhi-Wei Sun, \emph{Does the set $\{\binom x3+\binom y3+\binom z3:\
  x,y,z\in\mathbb z\}$ contain all integers?}, URL:
  https://mathoverflow.net/q/325659 (version: 2019-03-18).

\bibitem[{The}19]{sagemath}
{The Sage Developers}, \emph{{S}agemath, the {S}age {M}athematics {S}oftware
  {S}ystem ({V}ersion 8.8)}, 2019, {\tt https://www.sagemath.org}.

\end{thebibliography}
\end{document}